\numberwithin{equation}{section}
\newcommand{\ds}{\displaystyle}
\newcommand{\p}{\prime}
\newtheorem{lem}{Lemma}
\newtheorem{rmk}{Remark}
\newtheorem{thm}{Theorem}
\newtheorem{ass}{Assumption}
\title[Distributed Elliptic Control]
{A reconstructed discontinuous approximation for\\distributed elliptic control problems}
\author[R. Li]{Ruo Li} \address{CAPT, LMAM and School of Mathematical
Sciences, Peking University, Beijing 100871, P.R. China;
Chongqing Research Institute of Big Data, Peking University, Chongqing
401121, P.R. China}
\email{rli@math.pku.edu.cn}
\author[H.-Y. Liu]{Haoyang Liu}\address{National Engineering Laboratory for Big Data Analysis and Applications, Peking University,
Beijing, 100871, P.R. China. Chongqing Research Institute of Big Data, Peking University, Chongqing,
401121, P.R. China.}
\email{liuhaoyang@pku.edu.cn}
\author[J. Yin]{Jun Yin} \address{School of Mathematical
Sciences, Peking University, Beijing 100871, P.R. China}
\email{2401110056@stu.pku.edu.cn}
\begin{document}
\maketitle

\begin{abstract}
  In this paper, we present and analyze an interior penalty 
  discontinuous Galerkin 
  method for the distributed elliptic optimal control problems. 
  It is based on a reconstructed discontinuous
  approximation which admits arbitrarily high-order 
  approximation space with only one unknown
  per element.
  Applying this method, 
  we develop a proper discretization scheme that approximates 
  the state and adjoint variables in the approximation space. Our main 
  contributions are twofold: (1) the derivation of both a priori 
  and a posteriori error estimates of the $L^2$-norm and the energy 
  norms, and
  (2) the implementation of an efficiently solvable discrete system, which 
  is solved via a linearly convergent projected gradient descent method.
  Numerical experiments are provided to verify the convergence 
  order in a priori error estimate and the efficiency of a posteriori error 
  estimate.

\noindent \textbf{keywords}: distributed elliptic control; 
discontinuous Galerkin method; reconstructed discontinuous 
approximation

\end{abstract}


\section{Introduction}

Optimal control problems governed by partial differential 
equations serve as a vital bridge between advanced mathematical 
theory and real-world engineering. Their critical importance across 
applied sciences has motivated decades of research, leading to 
significant advancements in both the underlying theory and the 
numerical techniques devised to tackle them. 
We refer to \cite{lions1971ocpde} for a mathematical theory of 
optimal control 
problems and \cite{manzoni2021ocpde, teo2021ocode} for some 
numerical methods. Among 
all these methods, the finite element method is extensively used to 
numerically 
solve this class of problems due to its high accuracy and adaptability.

The numerical analysis based on the finite element method can be traced 
back to 1970s with the works in \cite{falk1973ocp, geveci1979ocp}. 
Thereafter, research on many types of 
control problems has advanced significantly over the past few decades. 
These developments range from comprehensive a priori and a posteriori 
error analysis to the creation of efficient error indicators for adaptive 
finite element methods. A description of the methodology can be 
found in \cite{hinze2008ocpde}. 
More specifically, for Dirichlet boundary control, we refer to \cite{
chowdhury2017dirichlet, 
garg2023dirichlet, gong2019adaptive}. 
For Neumann boundary control, we refer to \cite{
geveci1979ocp, casas2012neumann, 
casas2008neumann, brenner2019neumann}. 
For distributed control, we refer to \cite{
chowdhury2015priori, brenner2018distributed,  
brenner2020distributed, li2007posteriori} 
and references therein. Variational discretization approaches for the 
control constrained problems are proposed 
in \cite{hinze2005varydiscre, gong2017adaptive}. In the context of  
adaptive finite element methods, we refer to \cite{
gong2017adaptive, gong2019adaptive, li2007posteriori} and references 
therein. The
monograph \cite{liu2008adaptive} constructed a framework of 
using multi-adaptive 
meshes in developing efficient numerical algorithms for optimal 
control with error estimates. 

In recent years, discontinuous Galerkin (DG) finite element methods 
have been widely investigated; we refer to \cite{arnold2002dgunified, 
cockburn2000dgsummary, riviere2008dgelliptic} 
for some monographs. Beyond the basic feature of allowing 
inter‑element discontinuities, DG methods are distinguished by a 
combination of traits: 
inherent local conservation, compact stencils and high‑order accuracy 
with inherent parallel scalability.
These attributes make them a 
compelling choice for a wide range of problems. On the other hand, 
DG methods have relatively lower computational efficiency compared 
to continuous elements because of the significant increase in the 
number of degrees of freedom \cite{hughes2000dgcgcompare}. 
This shortcoming is particularly pronounced for high‑order schemes.

To address this issue and reduce the number of degrees of freedom, 
the authors introduced a reconstructed discontinuous approximation 
(RDA)
method in \cite{li2024RDA}. This method constructs a high-order 
approximation 
space requiring only one unknown per element. The core step involves 
solving a local least-squares problem over a patch of neighboring 
elements. The resulting space forms a small subspace of the standard 
discontinuous piecewise polynomial space, which is linearly isomorphic 
to the discontinuous piecewise constant space. Consequently, 
the number of degrees of 
freedom is substantially reduced to match the number of mesh 
elements. 
Moreover, it has been proved that optimal approximation accuracy 
can be achieved by appropriately selecting the size of the element patch. 
This approach has been successfully utilized to solve a series of practical 
problems, as documented in \cite{li2024RDA, li2024biharm}.

In this article, we focus on distributed optimal control problems 
governed by second-order elliptic equations. 
Such models are ubiquitous in physics and engineering, as they 
describe equilibrium states in diffusion and potential flow 
phenomena. A typical approach to their discretization relies on 
classical Lagrange finite elements. For instance, 
\cite{li2007posteriori} adopted this method and provided a 
priori and recovery type a posteriori estimates of the modeling error. 
In addition, \cite{brenner2018distributed, brenner2020distributed} 
studied problems with pointwise state constraints on non-convex 
domains, proving a priori error estimates using both continuous 
and discontinuous finite elements. A theoretical advancement 
concerns adaptive methods under pointwise control constraints. 
Notably, \cite{gong2017adaptive} provided a rigorous convergence 
analysis for the adaptive finite element method under variational 
control discretization. 
Furthermore, the same work established the optimality of the 
adaptive algorithm 
with respect to approximations of the state and adjoint variables.

This paper applies the aforementioned RDA method to 
this class of problems, which feature strongly convex cost functionals. 
The numerical formulation employs the approximation 
space for the state and adjoint variables, paired with a piecewise 
constant 
discretization for the control variable. We provide a well-defined 
discretization and prove both a priori and a posteriori error 
estimates of the $L^2$-norm and the energy norms. 
The discrete optimality conditions are solved using a projected 
gradient descent method, which is proved to have linear convergence 
rate. Each computational loop involves the sequential 
computation of the state and the 
adjoint equation via the RDA method described above, and updating
the control variable using the solutions to these two
equations. The 
performance and theoretical properties of this method are then 
validated through a series of numerical examples.

The rest of the article is organized as follows. In Section \ref{sec2}, 
we present 
the model problem and outline the basic structure of the RDA method. 
We put forward the discrete system and an iterative method to solve it  
using projected gradient descent. 
In Section \ref{sec3}, we present a primary 
result for a priori error estimate and a super-convergence result for the 
box constraint case with added regularity on the control variable.
In Section \ref{sec4}, we prove a general a posteriori error estimate 
and a sharp 
estimate for the box constraint case. In Section \ref{sec5}, 
we present 
some numerical examples to confirm the theoretical results. Finally, we 
summarize the article in Section \ref{sec6}.

\section{Model Problem and the RDA Method}\label{sec2}
In this section, we will describe the fundamental formulations of the 
model problem and the numerical method. In Section \ref{sec2.1}, we 
introduce the model problem. In Section \ref{sec2.2} 
and \ref{sec2.3}, we construct 
the finite element space and illustrate the RDA method. In Section \ref{sec2.4},  
a discrete system is designed to approximate the model problem. 
In Section \ref{sec2.5}, a linearly convergent iterative method 
is provided to solve the discrete system. 

\subsection{Model Problem}\label{sec2.1}
Let $ \Omega_u,\Omega\subset
\mathbb{R}^d(d=2,3)$ be convex polygonal or polyhedral  
domains. Take the control space $U=L^2(\Omega_u),$ 
the state space $V=H^2(\Omega)$ and the admissible 
set $U_{ad}\subset U$ which is closed and convex. 
Consider the following optimal control problem:
\begin{equation}\label{eq1}
\begin{aligned}
&\min_{u\in U_{ad}}g(y)+j(u),&\\
\mathrm{s.t.}\ &-\nabla\cdot(A\nabla y)=f+Bu\quad\mathrm{in}\ 
\Omega,\\
&y=\phi\quad\mathrm{on}\ \partial\Omega,
\end{aligned}
\end{equation}
where $g$ and $j$ are convex continuously 
Frech$\acute{\mathrm{e}}$t differentiable 
functionals on the observation space $C=L^2(\Omega)$ and 
the control space $U$ respectively.
The matrix $A=A(x)=(a_{ij}(x))\in (W^{1,\infty}
(\Omega))^{d\times d}$ is symmetric positive definite 
such that there exist constants $0<\lambda<\Lambda$ satisfying
\[\lambda\|\xi\|^2\leq\xi^TA(x)\xi\leq\Lambda
\|\xi\|^2\quad\forall\xi\in\mathbb{R}^d,\ x\in\Omega.\]
In addition, we assume $f\in L^2(\Omega),\phi\in H^{\frac{3}{2}} 
(\partial\Omega),B:L^2(\Omega_u)
\rightarrow L^2(\Omega)$ is a bounded linear operator. We 
further assume that 

\begin{ass}\label{ass2}
The functionals $j(u)$ and $g(y)$ 
can be written in the form
\[j(u)=\int_{\Omega_u}^{}\tilde{j}(u)\mathrm{d}x,\quad g(y)=
\int_{\Omega}^{}\tilde{g}(y)\mathrm{d}x, \]
which means that
\[j^\prime(u)(v)=\int_{\Omega_u}^{}\tilde{j}^\prime(u)v\mathrm
{d}x,\quad g^\prime(y)(w)=
\int_{\Omega}^{}\tilde{g}^\prime(y)w\mathrm{d}x,\]
for $\forall u,v\in U,y,w\in Y.$
\end{ass} 

Assumption $\ref{ass2}$ justifies the interchange of the 
Frech$\acute{\mathrm{e}}$t derivative and the integral sign, 
which is standard practice for the proof that follows. 
It is naturally satisfied by the common forms of $g(y)$ and $j(u)$, 
such as $\|y-y_d\|_{L^p(\Omega)}^p$ and $\|u-u_d\|_{L^q
(\Omega_u)}^q$ with given targets  $y_d$ and $u_d$.
Furthermore, to simplify notation, we will abuse notation by 
using $j(u)$ and $g(y)$ to refer to  $\tilde{j}(u)$ and $\tilde{g}
(y)$ respectively in the subsequent discussion.

\begin{ass}\label{ass1}
The functionals $j(u)$ and $g(y)$ 
are strongly convex and have Lipschitz continuous Frech$\acute{e}$t derivatives, i.e. we have 
\[\alpha\|u-\tilde{u}\|_{L^2}^2\leq (j^\prime(u)-j^\prime(
\tilde{u}),u-\tilde{u})_{L^2},\quad\beta\|y-\tilde{y}\|_{L^2}^2\leq
 (g^\prime(y)-g^\prime(\tilde{y}),y-\tilde{y})_{L^2},\]
\[\|j^\prime(u)-j^\prime(\tilde{u})\|_{L^2}\leq C\|u-\tilde{u}\|_
{L^2},\quad\|g^\prime(y)-g^\prime(\tilde{y})\|_{L^2}\leq C\|y-
\tilde{y}\|_{L^2},\]
for $\forall u,\tilde{u}\in U_{ad},y,\tilde{y}\in Y$. 
Here $\alpha,\beta,C$ are positive constants.
\end{ass} 

\begin{rmk}\label{rmk5}
Here we denote $\|u-\tilde{u}\|_{L^2}:=\|u-\tilde{u}\|_{
L^2(\Omega_{u})}$ and $\|y-\tilde{y}\|_{L^2}:=
\|y-\tilde{y}\|_{L^2(\Omega)}.$ We will use these notations 
in the following part if there is no confusion.
\end{rmk}

Define a bilinear form $a:H_0^1(\Omega)
\times H_0^1(\Omega)\rightarrow\mathbb{R}$ as 
\[a(y,w)
:=(A\nabla y,\nabla w)_{L^2(\Omega)}.\] To derive the optimality condition of problem $\eqref{eq1}$, we separate
the state equation into two independent parts as
\begin{flalign*}
\left\{
\begin{aligned}
&-\nabla\cdot(A\nabla y_1)=f+Bu\quad\mathrm{in}\ \Omega,\\
&y_1=0\quad\mathrm{on}\ \partial\Omega,
\end{aligned}
\right.\quad
\left\{
\begin{aligned}
&-\nabla\cdot(A\nabla y_2)=0\quad\mathrm{in}\ \Omega,\\
&y_2=\phi\quad\mathrm{on}\ \partial\Omega.
\end{aligned}
\right.
\end{flalign*}
Notice that $y_2$ does not rely on $u$, we denote ${g_1}(y_1(u))
:=g(y_1(u)+y_2)$. 
Denote the optimal solution to $\eqref{eq1}$ by $(y^\ast,
u^\ast)$, then 
\begin{equation}\label{eq6}
\left\{
\begin{aligned}
&-\nabla\cdot(A\nabla y^\ast)=f+Bu^*\quad\mathrm{in}\ 
\Omega,\\
&y^\ast=\phi\quad\mathrm{on}\ \partial\Omega,
\end{aligned}
\right.
\end{equation}
and $y_1^\ast=y^\ast-y_2$. 
Define the adjoint variable $p^\ast$ such that 
\begin{equation}\label{eq5}
\left\{
\begin{aligned}
&-\nabla\cdot(A\nabla p^\ast)=g^\prime(y^\ast)\quad
\mathrm{in}\ \Omega,\quad\;\\
&p^\ast=0\quad\mathrm{on}\ \partial\Omega.
\end{aligned}
\right.
\end{equation}

Define the cost functional 
$J(y_1(u),u):={g_1}(y_1(u))+j(u),$
we have 
 \[J(y_1(u^\ast+t(v-u^\ast)),u^\ast+t(v-u^\ast)
)\geq J(y_1^\ast,u^\ast),\quad\forall v\in U_{ad},\  t\in[0,1].\]
Differentiate the above inequality at $t=0$ in the direction $v-u^*,$ then
$$(g_1^\prime(y_1^{*})y_1^\prime(u^*)+j^\prime(u^*))
(v-u^*)\geq 0,\quad\forall v\in U_{ad}.$$
Notice that $a(y_1(v),p^*)=(f+Bv,p^*)$ for $\forall v\in U_{ad}$, we have 
\[a(y_1^\prime(u^*)(v-u^*),p^*)-(B(v-u^*),p^*)=0,\quad \forall v\in U_{ad}.\]
Here we use $(\,\cdot\,,\,\cdot\,)$ to denote the $L^2$ inner-product 
$(\,\cdot\,,\,\cdot\,)_{L^2}$. 
Since \[a(w,p^*)=(g^\prime(y^\ast),w)=(g_1^\prime
(y_1^\ast),w),\quad\forall w\in H_0^1(\Omega),\]
we take $w=y_1'(u^*)(v-u^*)\in H_0^1(\Omega)$ and derive
\begin{equation*}
(j^\prime(u^*)+B^*p^*,v-u^*)\geq0,\quad\forall v\in U_{ad}.
\end{equation*}
So far we have derived the optimality conditions of problem 
$\eqref{eq1}$ as follows.
\begin{equation}\label{eq7}
\left\{
\begin{aligned}
&-\nabla\cdot(A\nabla y^\ast)=f+Bu^*,\quad y^\ast|_{\partial
\Omega}=\phi,\\
&-\nabla\cdot(A\nabla p^\ast)=g^\prime(y^\ast),\quad p^\ast|_
{\partial\Omega}=0,\\
&(j^\prime(u^*)+B^*p^*,v-u^*)\geq0,\quad\forall v\in U_{ad}.
\end{aligned}
\right.
\end{equation}
It is well-known from \cite{lions1971ocpde} that there exists a unique 
solution 
($y^*,u^*$) to problem $\eqref{eq1}$, and the  optimality condition 
\eqref{eq7} is a necessary and sufficient condition of the exact solution ($y^*,u^*$).

\subsection{The Reconstruction Operator}\label{sec2.2}
We are going to construct the finite element space.
Let $\mathcal{T}_h$ be a quasi-uniform triangulation over 
$\Omega\subset\mathbb{R}^d$ with mesh size 
$h$. Let $\mathcal{E}_h^I$ and 
$\mathcal{E}_h^B$ be the set of all $d-1$ dimensional 
faces in the interior of $\Omega$ in $\mathcal{T}_h$, and faces 
on the boundary $\partial\Omega$ in $\mathcal{T}_h$ 
respectively. 
The set of all faces is denoted by 
 $\mathcal{E}_h:=\mathcal{E}_h^I\cup\mathcal{E}_h^B.$
Now we will briefly introduce the reconstruction 
operator $\mathcal{R}^{m}$ proposed in \cite{li2024RDA}.  The 
construction of $\mathcal{R}^{m}$ includes three steps in total.

Step 1. Given $K\in\mathcal{T}_h$, we want to construct an 
element patch $S(K)$, which consists of $K$ itself and some of its
surrounding elements. The construction of $S(K)$ is 
conducted in a recursive way. We begin
 by setting $S_0(K)=\{K\}$, and define $S_t(K)$ recursively:
$$S_t(K)=\bigcup\limits_{K'\in S_{t-1}(K)}N(K'),\quad 
t=0,1,\cdots $$ 
where $$N( K) : = \{ K^{\prime }\in \mathcal{T}_h
\mid\exists e\in\mathcal{E}_h\ \mathrm{ s.t.}\  
e\subset\partial{K^{\prime }}\cap 
\partial {K}\} .$$
The recursion stops once $t$ meets the 
condition that the cardinality $\# S_{t}( K) \geq \# S$, which 
is a given constant independent of $K$. Then
we define the patch $S( K) : = S_{t}( K) .$ 
Generally, the threshold $\# S$ can be 
chosen as $\frac{d+1}{2}\mathrm{dim}(\mathbb{P}_m(K))$ 
by numerical observations.

Step 2. For each $K\in\mathcal{T}_h$, we will solve a local 
least square problem on $S(K).$ Let
$\boldsymbol{x}_K$ be the barycenter of $K$ and 
$I(K):=\{\boldsymbol{x}_{K'}\in\Omega\mid K'\in S(K)\}$ be the set of barycenters of elements in $S(K)$.  
Let $V_h^{0}$ be the piecewise constant space  
on $\mathcal{T}_h$,
$$V_h^0:=\{w_h\in L^2(\Omega)\mid w_h|_K\in\mathbb
{P}_0(K),\:\forall K\in\mathcal{T}_h\}.$$

Given a piecewise constant function $w_h\in V_h^0$, for each 
$K\in\mathcal{T}_h$, we seek a polynomial of degree no 
more than $m$ by the following constrained local 
least squares problem:
\begin{equation}\label{eq2}
\begin{aligned}
&\underset{p\in\mathbb{P}_m(S(K))}{\operatorname*{
\min}}\sum_{\boldsymbol{x}\in I(K)}(p(\boldsymbol{x})
-w_h(\boldsymbol{x}))^2,\\
\mathrm{s.t.}\ &p(\boldsymbol{x}_K)=w_h(\boldsymbol{x}_K).
\end{aligned}
\end{equation}
The existence and uniqueness of the solution to this problem
is detailed in \cite{li2024RDA}.
We denote this solution as $w_{h,S(K)}
\in\mathbb{P}_m(S(K))$. 

Step 3.  Given $K\in\mathcal{T}_h$, notice that $w_{h,S(K)}$ 
linearly depends on the function $w_h$, we can then define a 
linear operator $\mathcal{R}_K^m:V_h^0
\longrightarrow\mathbb{P}_m(S(K))$ mapping $w_h$  to $w_{h,S(K)}$.
Finally, we define the global 
reconstruction operator $\mathcal{R}^m$ as
$$\begin{aligned}
&\mathcal{R}^{m}:\:V_h^0\longrightarrow V_h^m,\\
\mathrm{s.t.}\ &(\mathcal{R}^{m}w_h)|_{K}:=(\mathcal{R}_{K}^{m}
w_h)|_{K},\quad\forall K\in\mathcal{T}_h.
\end{aligned}$$
Here $V_h^m:=\mathcal{R}^
mV_h^0$ is the image space. $(\mathcal{R}^mw_h)|_K$ is the 
restriction of $\mathcal{R}_K^mw_h$ on $K\in\mathcal{T}_h$. 
Any piecewise constant function 
$w_h$ is mapped to a piecewise $m$-th degree polynomial 
$\mathcal{R}^mw_h$. Moreover, it can be shown 
that $\mathcal{R}^m$ is a linear isomorphism from 
$V_h^0$ to $V_h^m$, i.e. $\mathrm{dim}(V_h^m)=\mathrm{dim}
(V_h^0).$

\subsection{The Reconstructed Discontinuous Approximation Method}
\label{sec2.3}
Here we present the numerical method for the 
second-order elliptic problem based on the interior penalty DG 
method and the reconstructed space $V_h^m$. 

We first introduce some notations commonly used in the DG 
framework. Let $e\in\mathcal{E}_h^I$ be any interior face 
shared by $K^+,K^-\in\mathcal{T}_h$ with the unit 
outward normal vectors $\mathbf{n}^+,\mathbf{n}
^-$ along $e$, respectively. 
For any piecewise smooth scalar-valued function $v$ and 
vector-valued function $\boldsymbol
{\tau}$, the jump operator [$\,\cdot\,$] and the average operator 
$\{\,\cdot\,\}$ are defined as
$$[v]|_{e}:=v^{+}|_{e}\mathbf{n}^{+}+v^{-}|_{e}\mathbf
{n}^{-},\quad\{v\}|_{e}:=\frac{1}{2}(v^{+}|_{e}+v^{-}|_{e}),$$
$$[\boldsymbol{\tau}]|_{e}:=\boldsymbol{\tau}^{+}|_{e}\cdot
\mathbf{n}^{+}+\boldsymbol{\tau}^{-}
|_{e}\cdot\mathbf{n}^{-},\quad\{\boldsymbol{\tau}\}|_{e}:=
\frac{1}{2}
(\boldsymbol{\tau}^{+}|_{e}+\boldsymbol{\tau}^{-}|_{e}),$$
where $v^\pm:=v|_{K^\pm},\boldsymbol{\tau}^\pm:=\boldsymbol
{\tau}|_{K^\pm}.$ For any boundary face $e\in\mathcal{E}
_h^B$, with the unit outward normal $\mathbf{n}$ along 
$e,$ the above two operators are modified as
$$[v]|_e:=v|_e\mathbf{n},\ \{v\}|_e:=v|_e,\quad [\boldsymbol{\tau}]
 | _e:=\boldsymbol{\tau }|_e\cdot \mathbf{n},\ \{
\boldsymbol{\tau }\} | _e: = \boldsymbol{\tau }| _e.$$ 

Consider the problem
\begin{equation}\label{eq8}
\left\{
\begin{aligned}
&-\nabla\cdot(A\nabla y)=F\quad\mathrm{in}\ \Omega,\\
&y=\phi\quad\mathrm{on}\ \partial\Omega.
\end{aligned}
\right.
\end{equation}
Here $F\in L^2(\Omega),\phi\in H^{\frac{3}{2}}(\partial\Omega)$ is given. 
We seek the numerical solution $y_h^m\in V_h^m$ 
such that
\begin{equation}\label{eq9}
a_{h}(y_h^m,w_h^m)=l_{h}(w_h^m),\quad\forall 
w_h^m\in V_h^m,
\end{equation}
where $a_{h}:V_h^m\times V_h^m\rightarrow\mathbb{R},l_{h}:V_h^m
\rightarrow\mathbb{R}$ are
 given by 
$$\begin{aligned}
a_{h}(y_h^m,w_h^m)
:=&\sum_{K\in\mathcal{T}_h}\int_KA\nabla y_h^m\cdot\nabla w_h^m
\mathrm{d}x-\sum_{e\in\mathcal{E}_h}\int_e\{A\nabla y_h^m\}\cdot
[w_h^m]\mathrm{d}s\\
-&\sum_{e\in\mathcal{E}_h}\int_e\{A\nabla w_h^m\}
\cdot[y_h^m]\mathrm{d}s+\mu \sum_{e\in\mathcal{E}_h}h_e^{-1}\int_e
[y_h^m]\cdot[w_h^m]\mathrm{d}s,
\end{aligned}$$
$$l_{h}(w_h^m):=\sum_{K\in\mathcal{T}_h}
\int_KFw_h^m\mathrm{d}x-\sum_{e\in\mathcal
{E}_h^B}\int_e\{A\nabla w_h^m\}\cdot\mathbf{n}_e\phi
\mathrm{d}s+\mu \sum_{e\in\mathcal{E}_h^B}h_e^{-1}
\int_e[w_h^m]\cdot\mathbf{n}_e\phi\mathrm{d}s.$$
Here $\mu>0$ is the penalty parameter. 
Denote 
\[H^s(\mathcal{T}_h):=\{w_h\in L^2(\Omega)\mid 
w_h|_{K}\in H^s(K),\forall K\in\mathcal{T}_h\},\] 
and $\widetilde{h_e}=h_e/\mu.$ We define the following 
DG-norms 
$$\|w_h\|_{\mathrm{DG}}^2:=\sum_{K\in\mathcal{T}_h}^{}\|
\nabla w_h\|_{L^2(K)}^2+\sum_{e\in\mathcal{E}_h} \widetilde{h_e}^
{-1}\|[w_h]
\|_{L^2(e)}^2,\quad\forall w_h\in H^1(\mathcal{T}_h),$$
$$\interleave w_h\interleave_{\mathrm{DG}}^2:=\|w_h\|
_{\mathrm{DG}}^2
+\sum_{e\in\mathcal{E}_h}\widetilde{h_e}\|\{\nabla w_h\}
\|_{L^2(e)}^2,\quad\forall w_h\in H^2(\mathcal{T}_h).$$
By the inverse estimate 
and the trace inequality, we know that 
both norms are equivalent restricted on finite dimensional 
space $V_h^m$, i.e. 
$$\|w_h^m\|_{\mathrm{DG}}\leq\interleave w_h^m\interleave_
{\mathrm{DG}}\leq C\|w_h^m\|_{\mathrm{DG}},\quad\forall 
w_h^m\in V_h^m.$$ 

To enlarge the domain of the bilinear form $a_{h}$, we define
$V_h:=V_h^m+H^2(\Omega)$. The 
relationship between the energy norms and the $L^2$-norm in $V_h$  
is given in \cite{arnold1982ipdg}:
\begin{equation}\label{eqb}
\|w_h\|_{L^2}\leq C\|w_h\|_{\mathrm{DG}}
\leq C\interleave w_h\interleave_{\mathrm{DG}},\quad\forall 
w_h\in V_h.
\end{equation}
This constant $C$ only depends on $\Omega$ and the lower 
bound $K_0$ of all angles of all triangles in $\mathcal{T}_h$ and the 
grade constant $K_1$ of $\mathcal{T}_h$ defined as follows.
\[K_1:=\sup\limits_{K\in \mathcal{T}_h}\sup
\limits_{e\in\mathcal{E}_K}\frac{h_K}{h_e}.\]
Here $\mathcal{E}_K=\{e\in\mathcal{E}_h\mid e\subset \partial
 K\}.$ So it is invariant under uniform refinement 
of $\mathcal{T}_h.$ 

We can extend the domain of $a_{h},l_{h}$ to $V_h$ 
without confusion, namely $$a_{h}:V_h\times V_h
\rightarrow\mathbb{R},\quad l_{h}:V_h\rightarrow 
\mathbb{R}.$$
Subsequently, we can easily derive the stability of $a_{h}$ on 
$(H^2(\Omega)\cap H_0^1(\Omega),\|\cdot\|_{\mathrm{DG}})$ 
\[a_{h}(w_h,w_h)\geq c\|w_h\|_{\mathrm{DG}}^2,
\quad\forall w_h\in H^2(\Omega)\cap H_0^1(\Omega).\] 
This is due to the terms of  integration on every edge vanishes for 
$w_h\in H^2(\Omega)\cap H_0^1(\Omega)$. Naturally, we have the 
stability of $a_{h}$ on $(H^2(
\Omega)\cap H_0^1(\Omega),\|\cdot\|_{L^2})$ by $\eqref{eqb}$.
In addition, for the exact solution $y$ to $\eqref{eq8}$, we know by 
direct calculation that 
\begin{equation}\label{eqcon}
a_{h}(y,w_h)=l_{h}(w_h),\quad\forall w_h
\in V_h.
\end{equation}

Now we present some results for the boundedness and stability 
of $a_{h}$ on $(V_h^m,\interleave\cdot\interleave_
{\mathrm{DG}})$ and a priori and a posteriori error 
estimates for the reconstructed discontinuous approximation 
method. Their proofs refer to 
\cite{li2024RDA, riviere2003dgposteriori}.
\begin{lem}\label{lem1}
Let $a_{h}$ be 
defined with sufficiently large $\mu$, then there exist constants 
$C,c$ such that 
$$|a_{h}(y_h,w_h)|\leq C\interleave y_h\interleave_
{\mathrm{DG}} \interleave w_h\interleave_{\mathrm{DG}},
\quad\forall y_h,w_h\in V_h,$$
$$a_{h}(w_h^m,w_h^m)\geq c \interleave w_h^m
\interleave_{\mathrm{DG}}^2, \quad\forall w_h^m\in V_h^m.$$
\end{lem}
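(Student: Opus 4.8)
The plan is to treat the two claims separately: boundedness holds on all of $V_h$ and follows from the Cauchy--Schwarz inequality alone, while coercivity is restricted to the finite-dimensional polynomial subspace $V_h^m$ and requires an inverse trace estimate together with the choice of a large penalty parameter $\mu$. Since the norm equivalence $\|w_h^m\|_{\mathrm{DG}}\le\interleave w_h^m\interleave_{\mathrm{DG}}\le C\|w_h^m\|_{\mathrm{DG}}$ on $V_h^m$ is already available, I would first establish coercivity with respect to the weaker norm $\|\cdot\|_{\mathrm{DG}}$ and then transfer it to $\interleave\cdot\interleave_{\mathrm{DG}}$.

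For boundedness I would bound the four terms of $a_h(y_h,w_h)$ one by one. The volume term is controlled by $\Lambda\big(\sum_K\|\nabla y_h\|_{L^2(K)}^2\big)^{1/2}\big(\sum_K\|\nabla w_h\|_{L^2(K)}^2\big)^{1/2}$, using the upper bound $\Lambda$ on $A$ and Cauchy--Schwarz. For each consistency term I would split the weight as $\widetilde{h_e}^{1/2}\cdot\widetilde{h_e}^{-1/2}$, pairing the average of the scaled gradient with the jump; since $A$ is continuous across each interior face one has $\{A\nabla y_h\}=A\{\nabla y_h\}$, so $\|\{A\nabla y_h\}\|_{L^2(e)}\le\Lambda\|\{\nabla y_h\}\|_{L^2(e)}$, and a discrete Cauchy--Schwarz over the faces yields a bound by $\Lambda\interleave y_h\interleave_{\mathrm{DG}}\interleave w_h\interleave_{\mathrm{DG}}$. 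The penalty term is handled the same way after noting $\mu h_e^{-1}=\widetilde{h_e}^{-1}$. Summing gives $|a_h(y_h,w_h)|\le C\interleave y_h\interleave_{\mathrm{DG}}\interleave w_h\interleave_{\mathrm{DG}}$; crucially this uses no inverse inequality, because the $\interleave\cdot\interleave_{\mathrm{DG}}$ norm already contains the $\widetilde{h_e}\|\{\nabla\cdot\}\|_{L^2(e)}^2$ contributions, so the argument is valid on the full space $V_h$.

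For coercivity, setting $y_h=w_h=w_h^m$ collapses the two symmetric consistency terms into a single factor-two term, giving
\[a_h(w_h^m,w_h^m)=\sum_K\int_K A\nabla w_h^m\cdot\nabla w_h^m\,\mathrm{d}x-2\sum_e\int_e\{A\nabla w_h^m\}\cdot[w_h^m]\,\mathrm{d}s+\sum_e\widetilde{h_e}^{-1}\|[w_h^m]\|_{L^2(e)}^2.\]
The first term is bounded below by $\lambda\sum_K\|\nabla w_h^m\|_{L^2(K)}^2$ by the coercivity of $A$. I would estimate the cross term by Young's inequality with a parameter $\delta>1$, producing $\delta\sum_e\widetilde{h_e}\|\{A\nabla w_h^m\}\|_{L^2(e)}^2$ and $\delta^{-1}\sum_e\widetilde{h_e}^{-1}\|[w_h^m]\|_{L^2(e)}^2$. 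The decisive step is the inverse trace inequality on the polynomial space, $\|\nabla w_h^m\|_{L^2(e)}^2\le C_{\mathrm{tr}}h_e^{-1}\|\nabla w_h^m\|_{L^2(K)}^2$, valid because $V_h^m$ is a subspace of the piecewise $\mathbb{P}_m$ space; together with $\widetilde{h_e}=h_e/\mu$ this turns the first Young term into $(C\delta/\mu)\sum_K\|\nabla w_h^m\|_{L^2(K)}^2$, where the finite overlap of faces per element (mesh regularity) is used in the summation. Collecting terms gives $a_h(w_h^m,w_h^m)\ge(\lambda-C\delta/\mu)\sum_K\|\nabla w_h^m\|_{L^2(K)}^2+(1-\delta^{-1})\sum_e\widetilde{h_e}^{-1}\|[w_h^m]\|_{L^2(e)}^2$; choosing, say, $\delta=2$ and $\mu>2C/\lambda$ makes both coefficients positive, yielding $a_h(w_h^m,w_h^m)\ge c\|w_h^m\|_{\mathrm{DG}}^2$. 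Finally the norm equivalence on $V_h^m$ gives $\|w_h^m\|_{\mathrm{DG}}^2\ge C^{-2}\interleave w_h^m\interleave_{\mathrm{DG}}^2$, completing the claim.

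I expect the main obstacle to be the cross-term estimate in the coercivity argument. This is exactly where the restriction to $V_h^m$ and the requirement of a sufficiently large $\mu$ enter: the inverse trace inequality fails for general $H^2(\Omega)$ functions, so coercivity cannot be expected on all of $V_h$, and the constant $C_{\mathrm{tr}}$ must be traced carefully (it depends on $m$ and the shape-regularity of $\mathcal{T}_h$) to make the threshold for $\mu$ explicit. Everything else --- boundedness and the transfer of coercivity between the two DG norms --- is routine once the norm equivalence quoted in the excerpt is invoked.
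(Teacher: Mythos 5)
Your proof is correct, and it follows essentially the same route as the paper: the paper does not prove Lemma \ref{lem1} in-text but defers to its references, where the argument is precisely this standard one --- continuity on all of $V_h$ by Cauchy--Schwarz with the $\widetilde{h_e}^{1/2}\cdot\widetilde{h_e}^{-1/2}$ splitting (no inverse inequality needed, since $\interleave\cdot\interleave_{\mathrm{DG}}$ already controls the face gradient averages), and coercivity on $V_h^m$ by Young's inequality, the polynomial inverse trace estimate, a sufficiently large penalty $\mu$, and the norm equivalence on $V_h^m$. Your remarks on where the restriction to $V_h^m$ and the largeness of $\mu$ enter are also accurate; nothing further is needed.
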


\begin{thm}\label{thm1}
Let $y\in H^{m+1}(\Omega)$ be the exact solution to 
$\eqref{eq8}$, and $y_h^m \in V_h^m$ be the numerical 
solution to $\eqref{eq9}$. Take $\mu$ 
sufficiently large, then there exists a constant 
$C=C(y)$ such that
$$\interleave y - y_h^m\interleave_{\mathrm{DG}} \leq C 
h^m \|y\|_{H^{m+1}},$$ and $$\|y - y_h^m\|_{L^2} \leq C 
h^{m+1} \|y\|_{H^{m+1}}.$$
\end{thm}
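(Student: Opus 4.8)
\emph{Plan.} The statement is a standard a priori DG error estimate, so the natural route is: (i) a Céa-type quasi-optimality argument in the energy norm $\interleave\cdot\interleave_{\mathrm{DG}}$ built from Galerkin orthogonality together with the boundedness and coercivity in Lemma \ref{lem1}, (ii) an approximation estimate for the reconstructed space $V_h^m$, and (iii) an Aubin--Nitsche duality argument for the $L^2$ bound.

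First I would record Galerkin orthogonality. Since $y\in H^{m+1}(\Omega)\subset H^2(\Omega)\subset V_h$ is the exact solution, the consistency identity \eqref{eqcon} gives $a_{h}(y,w_h^m)=l_{h}(w_h^m)$, while the discrete problem \eqref{eq9} gives $a_{h}(y_h^m,w_h^m)=l_{h}(w_h^m)$; subtracting yields
\[
a_{h}(y-y_h^m,w_h^m)=0,\qquad\forall w_h^m\in V_h^m.
\]
Next, let $\Pi y\in V_h^m$ be a suitable approximation of $y$ in the reconstructed space and split $y-y_h^m=(y-\Pi y)+(\Pi y-y_h^m)=:\eta+\xi$ with $\xi\in V_h^m$. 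Using coercivity on $V_h^m$, the orthogonality above to annihilate $a_{h}(y-y_h^m,\xi)$, and boundedness (all from Lemma \ref{lem1}),
\[
c\interleave\xi\interleave_{\mathrm{DG}}^2\le a_{h}(\xi,\xi)=-a_{h}(\eta,\xi)\le C\interleave\eta\interleave_{\mathrm{DG}}\interleave\xi\interleave_{\mathrm{DG}},
\]
so $\interleave\xi\interleave_{\mathrm{DG}}\le (C/c)\interleave\eta\interleave_{\mathrm{DG}}$, and by the triangle inequality $\interleave y-y_h^m\interleave_{\mathrm{DG}}\le C\interleave y-\Pi y\interleave_{\mathrm{DG}}$. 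It then remains to invoke the approximation property of the reconstructed space proved in \cite{li2024RDA}: there exists $\Pi y\in V_h^m$ with $\interleave y-\Pi y\interleave_{\mathrm{DG}}\le Ch^m\|y\|_{H^{m+1}}$, obtained by combining the local least-squares stability with the trace and scaling inequalities that control the broken gradient, the face jumps, and the average-normal-derivative terms. This gives the energy-norm estimate.

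For the $L^2$ bound I would use duality. Let $z\in H^2(\Omega)\cap H_0^1(\Omega)$ solve the adjoint problem $-\nabla\cdot(A\nabla z)=y-y_h^m$ in $\Omega$, $z=0$ on $\partial\Omega$; convexity of $\Omega$ gives the elliptic regularity $\|z\|_{H^2}\le C\|y-y_h^m\|_{L^2}$. The zero-boundary-data version of the consistency identity \eqref{eqcon} gives $a_{h}(z,w_h)=(y-y_h^m,w_h)$ for all $w_h\in V_h$; taking $w_h=y-y_h^m$ and using the symmetry of $a_{h}$,
\[
\|y-y_h^m\|_{L^2}^2=a_{h}(y-y_h^m,z).
\]
By Galerkin orthogonality I may subtract any $z_h^m\in V_h^m$, so with $z_h^m=\Pi z$ the reconstructed approximation of $z$ and boundedness of $a_{h}$,
\[
\|y-y_h^m\|_{L^2}^2=a_{h}(y-y_h^m,z-\Pi z)\le C\interleave y-y_h^m\interleave_{\mathrm{DG}}\,\interleave z-\Pi z\interleave_{\mathrm{DG}}.
\]
Since $z$ has only $H^2$ regularity, the approximation estimate gives $\interleave z-\Pi z\interleave_{\mathrm{DG}}\le Ch\|z\|_{H^2}\le Ch\|y-y_h^m\|_{L^2}$. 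Combining with the energy bound $\interleave y-y_h^m\interleave_{\mathrm{DG}}\le Ch^m\|y\|_{H^{m+1}}$ and cancelling one factor of $\|y-y_h^m\|_{L^2}$ yields the claimed $O(h^{m+1})$ estimate.

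The main obstacle is the approximation estimate for $V_h^m$: because $\Pi y$ is not a classical local polynomial interpolant but must lie in the image of the reconstruction operator $\mathcal{R}^m$, controlling its energy-norm error --- in particular the face-jump and average-normal-derivative contributions --- is where the real work lies, and it rests on the least-squares well-posedness and optimal local approximation established in \cite{li2024RDA}. A secondary technical point is to verify that the consistency identity $a_{h}(z,\cdot)=(y-y_h^m,\cdot)$ holds for the merely $H^2$ dual solution and that the symmetry of $a_{h}$ is used correctly, so that Galerkin orthogonality can be applied against the dual error $z-\Pi z$ rather than the primal one.
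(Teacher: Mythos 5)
Your proof is correct and takes essentially the same route as the paper's own source: the paper defers the proof of Theorem \ref{thm1} to \cite{li2024RDA, riviere2003dgposteriori}, where the argument is precisely this Galerkin-orthogonality/C\'ea quasi-optimality step based on Lemma \ref{lem1} plus the approximation property of the reconstructed space, followed by an Aubin--Nitsche duality argument on the convex domain. The one ingredient to keep explicit is that the duality step requires the $O(h)$ energy-norm approximation of the merely-$H^2$ dual solution $z$ by elements of $V_h^m$, which is the low-regularity case of the reconstruction approximation estimate in \cite{li2024RDA} --- a point you already flag correctly.
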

\begin{thm}\label{thm4}
Let $y\in H^{2}(\Omega)$ be the exact solution to 
$\eqref{eq8}$, and $y_h^m \in V_h^m$ be the numerical 
solution to $\eqref{eq9}$. Take $\mu$ 
sufficiently large, then there exists a constant 
$C$ such that
$$\|y - y_h^m\|_{L^2} \leq C(\eta_1(y_{h}^m)+\eta_2
(y_{h}^m)+\eta_3(y_{h}^m)),$$
where
\begin{equation}\label{eq26}
\begin{array}{c}
\ds\eta_1^2(y_{h}^m)=\sum_{K\in\mathcal{T}_h}\widetilde{h_K}^4\|
F+\nabla\cdot (A\nabla y_{h}^m)\|_{L^2(K)}^{2} ,\\
\ds\eta_2^2(y_{h}^m)  =\sum_{e\in\mathcal{E}_{h}}\widetilde{h_e}\|
[y_h^m]\|_{L^2(e)}^2 ,\quad
\eta_3^2(y_{h}^m) =\sum_{e\in\mathcal{E}_h^I}\widetilde{h_e}^3\|
[A\nabla y_{h}^m]\|_{L^2(e)}^{2}.
\end{array}
\end{equation}
Here $\widetilde{h_K}=h_K/\mu.$
\end{thm}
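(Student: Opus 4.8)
The plan is to run an Aubin--Nitsche duality argument and to convert the resulting error representation into the three residual quantities defining $\eta_1,\eta_2,\eta_3$ by an element-wise integration by parts. Set $e:=y-y_h^m$ and note $e\in V_h$ since $y\in H^2(\Omega)\subset V_h$ and $y_h^m\in V_h^m\subset V_h$. Introduce the dual problem: let $z\in H^2(\Omega)\cap H_0^1(\Omega)$ solve $-\nabla\cdot(A\nabla z)=e$ with $z=0$ on $\partial\Omega$. Since $\Omega$ is convex and $A\in(W^{1,\infty}(\Omega))^{d\times d}$, elliptic regularity yields $\|z\|_{H^2}\le C\|e\|_{L^2}$. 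The dual problem is an instance of \eqref{eq8} with data $(e,0)$, so the consistency relation \eqref{eqcon} together with the symmetry of $a_h$ gives $a_h(w_h,z)=(e,w_h)$ for all $w_h\in V_h$; taking $w_h=e$ produces the representation $\|e\|_{L^2}^2=a_h(e,z)$.

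Next I would exploit Galerkin orthogonality. Subtracting the discrete equation \eqref{eq9} from the consistency of the exact solution gives $a_h(e,w_h^m)=0$ for all $w_h^m\in V_h^m$, so with a reconstructed approximation $z_h^m\in V_h^m$ of $z$ we may write $\|e\|_{L^2}^2=a_h(e,z-z_h^m)$. Then I would use $a_h(y,\cdot)=l_h(\cdot)$ and integrate by parts element by element in $a_h(y_h^m,\cdot)$, reorganizing the element-boundary contributions through the standard DG identity $\sum_K\int_{\partial K}(A\nabla y_h^m\cdot\mathbf{n}_K)v=\sum_{e\in\mathcal{E}_h^I}\int_e[A\nabla y_h^m]\{v\}+\sum_{e}\int_e\{A\nabla y_h^m\}\cdot[v]$. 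The symmetrization and consistency face terms then cancel exactly, leaving the error representation
\[
\|e\|_{L^2}^2=\sum_{K\in\mathcal{T}_h}\int_K\bigl(F+\nabla\cdot(A\nabla y_h^m)\bigr)(z-z_h^m)\,\mathrm{d}x-\sum_{e\in\mathcal{E}_h^I}\int_e[A\nabla y_h^m]\{z-z_h^m\}\,\mathrm{d}s+(\text{terms pairing }[y_h^m]),
\]
where the remaining terms pair $[y_h^m]$ against $\{A\nabla(z-z_h^m)\}$ and, through the penalty, against $[z-z_h^m]$, the boundary faces being treated identically with the data $\phi$ absorbed into the boundary part of the jump.

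To finish, I would estimate each term by Cauchy--Schwarz on elements and faces and insert the approximation/trace bounds for the reconstructed approximant, namely $\|z-z_h^m\|_{L^2(K)}\le Ch_K^2\|z\|_{H^2}$, $\|z-z_h^m\|_{L^2(e)}\le Ch_e^{3/2}\|z\|_{H^2}$ and $\|\nabla(z-z_h^m)\|_{L^2(e)}\le Ch_e^{1/2}\|z\|_{H^2}$, which are available from the RDA approximation theory of \cite{li2024RDA} underlying Theorem \ref{thm1}. Matching the powers of $h$ (with $\widetilde{h}=h/\mu$), the volume term is controlled by $C\eta_1\|z\|_{H^2}$, the flux-jump term by $C\eta_3\|z\|_{H^2}$, and the two solution-jump terms each by $C\eta_2\|z\|_{H^2}$. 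Summing and using $\|z\|_{H^2}\le C\|e\|_{L^2}$ gives $\|e\|_{L^2}^2\le C(\eta_1+\eta_2+\eta_3)\|e\|_{L^2}$, and dividing by $\|e\|_{L^2}$ completes the proof.

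I expect the main obstacle to lie in the second step: carrying out the integration by parts so that the averaging and penalty face terms cancel precisely while the genuine residuals survive, and in particular handling the inhomogeneous boundary data $\phi$ so that the boundary faces are correctly absorbed into the boundary contribution of $\eta_2$. The weights $\widetilde{h_K}^4,\widetilde{h_e},\widetilde{h_e}^3$ in the estimators are exactly those needed for the trace and approximation bounds on $z-z_h^m$ to recover $\|z\|_{H^2}$ to the first power, and keeping this bookkeeping of $h_e$-powers consistent across all terms is the crux of the argument.
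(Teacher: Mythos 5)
The paper never proves Theorem \ref{thm4}: it is stated with the remark that its proof ``refers to'' \cite{li2024RDA, riviere2003dgposteriori}, so there is no in-paper argument to compare against. Your duality proof is essentially the standard route taken in the cited reference of Rivi\`ere--Wheeler: a dual problem with $H^2$ regularity on the convex domain, consistency \eqref{eqcon} plus symmetry of $a_h$ to get $\|e\|_{L^2}^2=a_h(e,z)$, Galerkin orthogonality to replace $z$ by $z-z_h^m$, element-wise integration by parts to expose the volume residual and the two kinds of face residuals, and weighted Cauchy--Schwarz using approximation and trace bounds for an RDA approximant $z_h^m$ of $z$. All of these steps are sound in the present setting; in particular, Galerkin orthogonality is legitimate because both \eqref{eqcon} and \eqref{eq9} hold against $V_h^m$, and the approximation bounds you invoke only require $z\in H^2$ (second-order accuracy of $\mathcal{R}^m$, which is what \cite{li2024RDA} provides here), so the bookkeeping of the weights $\widetilde{h_K}^4,\widetilde{h_e},\widetilde{h_e}^3$ comes out exactly as you describe.

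The only place your argument is not airtight is the point you flag yourself: the boundary faces. Carrying out the integration by parts with the data terms of $l_h$, the boundary contributions pair $\{A\nabla \psi\}$ and the penalty against $(y_h^m-\phi)\mathbf{n}_e$, whereas the paper's estimator $\eta_2$ uses $[y_h^m]|_e=y_h^m\mathbf{n}_e$ on $\mathcal{E}_h^B$. Since $\|y_h^m-\phi\|_{L^2(e)}$ is not controlled by $\|y_h^m\|_{L^2(e)}$ in general, the ``absorption into the boundary part of $\eta_2$'' is exact only when $\phi=0$, or when one reads the boundary jump in $\eta_2$ as measured relative to the Dirichlet data (which is the convention in \cite{riviere2003dgposteriori}). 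This is an imprecision inherited from the theorem's statement rather than a flaw in your strategy, but a complete write-up should either assume $\phi=0$ or redefine the boundary contribution of $\eta_2$ as $\widetilde{h_e}\|y_h^m-\phi\|_{L^2(e)}^2$; with that amendment your proof closes.
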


\subsection{Discrete System.} \label{sec2.4}
So far, we have established the numerical scheme to the second 
order elliptic equation and its convergence analysis.  Now we will
 give a proper discretization to the optimality condition 
$\eqref{eq7}$. We denote $(y^*,
u^*,p^*)$ as the exact solution to $\eqref{eq7}.$

By the consistency $\eqref{eqcon}$, we have
\begin{equation}\label{eq27}
\left\{
\begin{aligned}
&a_{h}(y^{\ast},w_h)=l_{h}[f+Bu^*,\phi]
(w_h),\quad\forall w_h\in V_h,\quad\ \ \; \\
&a_{h}(q_h,p^{\ast})=l_{h}[g^\prime(y^
{\ast}),0](q_h),\quad\forall q_h\in V_h,\\
&(j^\prime(u^*)+B^*p^{*},v-u^*)\geq 0,\quad 
\forall v\in U_{ad}.
\end{aligned}
\right.
\end{equation}
We define the discretization of $\eqref{eq27}$ as follows.
\begin{equation}\label{eq10}
\left\{
\begin{aligned}
&a_{h}(y_h^{m\ast},w_h^m)=l_{h}[f+Bu_h^*,\phi]
(w_h^m),\quad\forall w_h^m\in V_h^m,\\
&a_{h}(q_h^m,p_h^{m\ast})=l_{h}[g^\prime(y_h^
{m\ast}),0](q_h^m),\quad\forall q_h^m\in V_h^m,\\
&(j^\prime(u_h^*)+B^*p_h^{m*},v_h-u_h^*)\geq 0,\quad 
\forall v_h\in U_{adh}.
\end{aligned}
\right.
\end{equation}
Here for $w_h\in V_h,$ 
$$l_{h}[s,\phi](w_h):=\sum_{K\in\mathcal{T}_h}
\int_Ksw_h\mathrm{d}x-\sum_{e\in\mathcal{E}
_h^B}\int_e\{A\nabla w_h\}\cdot\mathbf{n}_e\phi
\mathrm{d}s+\mu\sum_{e\in\mathcal{E}_h^B}h_e^{-1}
\int_e[w_h]\cdot\mathbf{n}_e\phi\mathrm{d}s,$$
and $U_{adh}$ is a discretization of $U_{ad}$. 
Specifically, given a quasi-uniform mesh $\mathcal{T}_h^u$ 
on $\Omega_u$ with mesh size $h_u,$ 
we take $U_h$ to be the piecewise constant 
function space on $\mathcal{T}_h^u$ and define $U_{adh}:=U_{ad}
\cap U_h$, as in \cite{liu2008adaptive, li2007posteriori, yang2008bilinear}.

\begin{rmk}\label{rmk1.2}
Correspondingly, we can consider the discrete optimal control 
problem
\begin{equation}\label{eq11}
\begin{aligned}
&\min_{u_h\in U_{adh}}g(y_h^m)+j(u_h),&\\
\mathrm{s.t.}\ &\ a_{h}(y_h^{m},w_h^m)=l_{h}[f+Bu_h,\phi]
(w_h^m),\quad\forall w_h^m\in V_h^m,\\
&\ y_h^m\in V_h^m.
\end{aligned}
\end{equation}
Take $\mu$ sufficiently large, we shall prove that the discrete 
system $\eqref{eq10}$ is exactly 
the optimality condition of the discrete problem $\eqref{eq11}$. 
Then the 
finite dimensionality of  $\eqref{eq11}$ implies the existence of 
the solution to $\eqref{eq10}$. The strong convexity in 
Assumption $\ref{ass1}$ guarantees the uniqueness of 
the solution to $\eqref{eq10}$.
\begin{proof}
Let $(y_h^{m*},u_h^*)$ be the solution to \eqref{eq11}. 
Notice that  
 \[J(y_h^m(u_h^\ast+t(v_h-u_h^\ast)),u_h^\ast+t(v_h-u_h^\ast)
)\geq J(y_h^{m*},u_h^*),\quad\forall v_h\in U_{adh},\  t\in[0,1].\]
Differentiate the above inequality at $t=0$ in the direction $v_h-
u_h^*,$ we have
\begin{equation*}
\begin{aligned}
&(\tilde{g}^\prime(y_h^{m*})y_h^{m\prime}(u_h^*)+\tilde{j}^
\prime(u_h^*),v_h-u_h^*)\geq 0,\quad \forall v_h\in U_{adh}.
\end{aligned}
\end{equation*}
Here we use Assumption $\ref{ass2}$. 
Denote $F_h(w,q)$ to be the numerical solution to the following 
equation under scheme $\eqref{eq9}.$
\begin{equation*}
\left\{
\begin{aligned}
&-\nabla\cdot(A\nabla y)=w\quad\mathrm{in}\ \Omega,\\
&y=q\quad\mathrm{on}\ \partial\Omega.
\end{aligned}
\right.
\end{equation*} 
Using the equality 
\[(\tilde{g}'(y_h^{m*}),y_h^m(v_h)-F_h(f+Bv_h,\phi))=0,
\quad\forall v_h\in U_{adh},\]
we deduce that 
\begin{equation*}
(\tilde{g}'(y_h^{m*}),y_h^{m\prime}(u_h^*)(v_h-u_h^*)-F_h(B(v_h-u_h^*),0))=0,
\quad\forall v_h\in U_{adh}.
\end{equation*}

In addition, for $\forall w\in L^2(\Omega),$ 
\begin{equation*}
\begin{aligned}
&(\tilde{g}'(y_h^{m*}),F_h(w,0))=l_{h}[\tilde{g}'(y_h^{m*}),0](F_h(w,0))\\=\ 
&a_{h}(F_h(\tilde{g}'(y_h^{m*}),0),F_h(w,0))=a_{h}(F_h(w,0),F_h(\tilde{g}'
(y_h^{m*}),0))\\=\ &l_{h}[w,0](F_h(\tilde{g}'(y_h^{m*}),0))=(F_h(\tilde{g}'
(y_h^{m*}),0),w).
\end{aligned}
\end{equation*}
Take $w=B(v_h-u_h^*)$ to derive 
\[(\tilde{j}^\prime(u_h^*)+B^*F_h(\tilde{g}'(y_h^{m*}),0),v_h-u_h^*)\geq 0,\quad\forall v_h\in U_{adh}.\]
Define $p_h^{m*}:=F_h(\tilde{g}'(y_h^{m*}),0)$ and 
we can derive the  second and third equation in \eqref{eq10}. Hence 
$\eqref{eq10}$ is the optimality condition of $\eqref{eq11}.$
\end{proof}
\end{rmk}

\subsection{Iterative Method}\label{sec2.5}
It should be noted that the approximate system $\eqref{eq10}$ 
can not be solved directly.
Here we introduce an iterative method. 
Take $u_{h0}\in U_{adh}$ as an  
initial guess, we can then obtain a sequence \{($u_{hn},y_
{hn}^m,p_{hn}^m)\}_{n=0} ^\infty$ by the following algorithm.
\begin{equation}\label{eq5.1}
\left\{
\begin{aligned}
&a_{h}(y_{hn}^{m},w_{h}^m)=l_{h}[f+Bu_{hn}
,\phi](w_{h}^m),\quad\forall w_{h}^m\in V_h^m,\\
&a_{h}(q_h^m,p_{hn}^{m})=l_{h}[g^\prime(y_{hn}^
{m}),0](q_h^m),\quad\forall q_h^m\in V_h^m,\\
&u_{h \mkern+1mu n \mkern-2mu+\mkern-2mu 1}=Pr_{U_{adh}}
(u_{hn}-\rho_n(j'(u_{hn})+B^*p_{hn}^m)),
\end{aligned}
\right.
\end{equation}
where $Pr_{U_{adh}}:U\rightarrow U_{adh}$ is the projection 
operator and $\rho_n>0,\forall n\in \mathbb{N}.$ This is essentially 
a projected gradient descent method, which has linear convergence 
rate under strongly convex cost function. We will prove this property 
in detail.
\begin{lem}\label{lem2.2}
Let $(u_h,y_h^{m},p_h^{m})$ satisfy $\eqref{eq10}$, $\{(u_{hn},
y_{hn}^m,p_{hn}^m)\}_{n=0}^\infty$ be the sequence generated by 
$\eqref{eq5.1}$. If Assumptions $\ref{ass2}$,$\ref{ass1}$ 
hold, we can properly  
choose $\{\rho_n\}_{n=0}^\infty$ such that there exist constants
$0<\eta<1,\lambda=\lambda(\eta)>0,$ satisfy
\begin{equation*}
\begin{aligned}
&\|u_{hn}-u_h\|_{L^2}+\lambda\interleave y_{hn}
^m-y_h^{m}\interleave_{\mathrm{DG}}+\lambda\interleave p_{n}^m-
p_h^{m}\interleave_{\mathrm{DG}}\\
\leq\ & \eta^n(\|u_{h0}-u_h\|_{L^2}+\lambda\interleave y_{h0}^m-y_h^{m}
\interleave_{\mathrm{DG}}+\lambda\interleave
 p_{h0}^m-p_h^{m}\interleave_{\mathrm{DG}}).
\end{aligned}
\end{equation*}
In fact, we can show that $(u_{hn},y_{hn}^m,p_
{hn}^m)$ converges to $(u_{h},y_{h}^{m},p_{h}^{m})$ linearly. 
\end{lem}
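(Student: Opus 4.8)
The plan is to read the iteration \eqref{eq5.1} as a projected gradient descent for the reduced functional, whose gradient at a control $v$ is $\mathcal{G}(v):=j'(v)+B^{*}p^{m}(v)$, where $p^{m}(v)$ denotes the discrete adjoint obtained by solving the first two lines of \eqref{eq5.1} in succession with $u_{hn}$ replaced by $v$ (this map is single-valued by the coercivity of Lemma \ref{lem1}). The third line of \eqref{eq10} is exactly the variational inequality $(j'(u_h)+B^{*}p_h^{m},v_h-u_h)\ge 0$ for all $v_h\in U_{adh}$, which by the standard characterization of the metric projection is equivalent to the fixed-point identity $u_h=Pr_{U_{adh}}(u_h-\rho\,\mathcal{G}(u_h))$ for every $\rho>0$. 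Writing $a_n:=\|u_{hn}-u_h\|_{L^2}$, $b_n:=\interleave y_{hn}^m-y_h^{m}\interleave_{\mathrm{DG}}$ and $c_n:=\interleave p_{hn}^m-p_h^{m}\interleave_{\mathrm{DG}}$, I would prove three facts: (i) $b_n\le C_1 a_n$ and $c_n\le C_2 a_n$; (ii) a one-step contraction $a_{n+1}\le\eta_0 a_n$ with $\eta_0<1$; and (iii) the assembly of these into the weighted quantity $a_n+\lambda(b_n+c_n)$.

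For (i), subtracting the two state equations cancels $f$ and $\phi$ and leaves $a_{h}(y_{hn}^m-y_h^{m},w_h^m)=(B(u_{hn}-u_h),w_h^m)$ for all $w_h^m\in V_h^m$; testing with $w_h^m=y_{hn}^m-y_h^{m}$, then using coercivity (Lemma \ref{lem1}) and the embedding \eqref{eqb} gives $b_n\le C_1 a_n$. Subtracting the two adjoint equations gives $a_{h}(q_h^m,p_{hn}^m-p_h^{m})=(g'(y_{hn}^m)-g'(y_h^{m}),q_h^m)$; testing with $q_h^m=p_{hn}^m-p_h^{m}$ and using the Lipschitz bound on $g'$ from Assumption \ref{ass1} together with \eqref{eqb} yields $c_n\le C\|y_{hn}^m-y_h^{m}\|_{L^2}\le C_2 a_n$.

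Step (ii) is the heart of the argument, and the sign of one cross term is the main obstacle. By nonexpansiveness of $Pr_{U_{adh}}$ applied to the iterate and the fixed point,
\[a_{n+1}^2\le a_n^2-2\rho_n(\mathcal{G}(u_{hn})-\mathcal{G}(u_h),u_{hn}-u_h)+\rho_n^2\|\mathcal{G}(u_{hn})-\mathcal{G}(u_h)\|_{L^2}^2.\]
Lipschitz continuity $\|\mathcal{G}(u_{hn})-\mathcal{G}(u_h)\|_{L^2}\le L a_n$ follows from the Lipschitz bound on $j'$ in Assumption \ref{ass1} and from $\|B^{*}(p_{hn}^m-p_h^{m})\|_{L^2}\le C\,c_n\le C' a_n$ via step (i). For strong monotonicity, the $j'$ contribution gives $\alpha a_n^2$ by Assumption \ref{ass1}, but the term $(B^{*}(p_{hn}^m-p_h^{m}),u_{hn}-u_h)=(p_{hn}^m-p_h^{m},B(u_{hn}-u_h))$ must be controlled. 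The decisive identity comes from testing the state-difference equation with $p_{hn}^m-p_h^{m}$ and the adjoint-difference equation with $y_{hn}^m-y_h^{m}$; both sides equal $a_{h}(y_{hn}^m-y_h^{m},p_{hn}^m-p_h^{m})$, so
\[(B(u_{hn}-u_h),p_{hn}^m-p_h^{m})=(g'(y_{hn}^m)-g'(y_h^{m}),y_{hn}^m-y_h^{m})\ge\beta\|y_{hn}^m-y_h^{m}\|_{L^2}^2\ge 0,\]
where the strong convexity of $g$ (Assumption \ref{ass1}) supplies the nonnegativity. Hence $(\mathcal{G}(u_{hn})-\mathcal{G}(u_h),u_{hn}-u_h)\ge\alpha a_n^2$, and choosing $\rho_n\equiv\rho\in(0,2\alpha/L^2)$ gives $a_{n+1}\le\eta_0 a_n$ with $\eta_0=\sqrt{1-2\rho\alpha+\rho^2L^2}<1$.

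Finally, for (iii), set $\Phi_n:=a_n+\lambda(b_n+c_n)$, so $a_n\le\Phi_n$. Applying (i) at index $n+1$ and then (ii),
\[\Phi_{n+1}\le\big(1+\lambda(C_1+C_2)\big)a_{n+1}\le\big(1+\lambda(C_1+C_2)\big)\eta_0\,a_n\le\big(1+\lambda(C_1+C_2)\big)\eta_0\,\Phi_n.\]
Given any target $\eta\in(\eta_0,1)$, I would set $\lambda=\lambda(\eta):=(\eta-\eta_0)/\big(\eta_0(C_1+C_2)\big)>0$, which makes the factor equal to $\eta$, so $\Phi_{n+1}\le\eta\,\Phi_n$ and hence $\Phi_n\le\eta^n\Phi_0$, which is precisely the claimed estimate. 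Since $a_n,b_n,c_n$ are each dominated by $\Phi_n$, this also gives the stated linear convergence of the triple $(u_{hn},y_{hn}^m,p_{hn}^m)$ to $(u_h,y_h^{m},p_h^{m})$.
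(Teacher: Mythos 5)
Your proposal is correct and follows essentially the same route as the paper's proof: nonexpansiveness of $Pr_{U_{adh}}$ combined with the fixed-point characterization of the discrete variational inequality, the duality identity $(B(u_{hn}-u_h),p_{hn}^m-p_h^m)=(g'(y_{hn}^m)-g'(y_h^m),y_{hn}^m-y_h^m)\geq 0$ to control the cross term, stability of $a_h$ plus \eqref{eqb} to bound the state and adjoint errors by $\|u_{hn}-u_h\|_{L^2}$, and a weighted quantity $a_n+\lambda(b_n+c_n)$ assembled exactly as in the paper. Your version is, if anything, slightly more explicit than the paper's (e.g., in stating the fixed-point identity and the admissible range $\rho\in(0,2\alpha/L^2)$), but the ideas coincide.
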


\begin{proof}
We have
\begin{equation*}
\begin{aligned}
\|u_{h\mkern+1mu n\mkern-2mu+\mkern-2mu 1}-u_h\|_{L^2}^2=
&\|Pr_{U_{adh}}(u_{hn}-\rho_n(j'(u_{hn})+B^*p_{hn}^m))\\
&-Pr_{U_{adh}}(u_{h}-\rho_n(j'(
u_{h})+B^*p_{h}^m))\|_{L^2}^2\\
\leq&\|u_{hn}-u_{h}-\rho_n(j'(u_{hn})-j'(
u_{h})+B^*p_{hn}^m-B^*p_{h}^m)\|_{L^2}^2\\
=&\|u_{hn}-u_{h}\|_{L^2}^2+\rho_n^2\|j'(u_{hn})-j'(
u_{h})+B^*p_{hn}^m-B^*p_{h}^m\|_{L^2}^2\\
&-2\rho_n(u_{hn}-u_h,j'(u_{hn})-j'(u_h)+B^*p_{hn}^m-B^*p_h^m)\\
\leq&(1-2\alpha\rho_n+C\rho_n^2)\|u_{hn}-u_{h}\|_{L^2}^2+
C\rho_n^2\|p_{hn}^m-p_h^m\|_{L^2}^2\\
&-2\rho_n(u_{hn}-u_h,B^*p_{hn}^m-B^*p_h^m).
\end{aligned}
\end{equation*} 

From $\eqref{eq10}$ and $\eqref{eq5.1}$ we have
\begin{equation}\label{eq5.2}
\begin{aligned}
&a_{h}(y_{hn}^m-y_h^m,w_h^m)=(B(u_{hn}-u_h),w_h^m),\quad\forall w_h^m
\in V_h^m,\\
&a_{h}(q_h^m,p_{hn}^m-p_h^m)=(g^\p(y_{hn}^m)-g^\p(y_h^m),q_h^m),\quad
\forall q_h^m\in V_h^m.
\end{aligned}
\end{equation}
Let $w_h^m=p_{hn}^m-p_h^m,q_{hn}^m=y_{hn}^m-y_h^m.$ Then
\begin{equation*}
\begin{aligned}
&(u_{hn}-u_h,B^*p_{hn}^m-B^*p_h^m)=(B(u_{hn}-u_h),p_{hn}^m-
p_h^m)\\
=\ &(g^\p(y_{hn}^m))-g^\p(y_h^m),y_{hn}^m-y_h^m)\geq
\beta\|y_{hn}^m-y_h^m\|_{L^2}^2\geq0.
\end{aligned}
\end{equation*}
Let $w_h^m=y_{hn}^m-y_h^m,q_{hn}^m=p_{hn}^m-p_h^m.$ By the 
stability of $a_{h}$ on $(V_h^m,\interleave\cdot\interleave
_{\mathrm{DG}})$ and $\eqref{eqb}$, we have
\[\interleave y_{hn}^m-y_h^m\interleave_{\mathrm{DG}}\leq C
\|u_{hn}-u_h\|_{L^2},
\quad\interleave p_{hn}^m-p_h^m\interleave _{\mathrm{DG}}\leq C
\|y_{hn}^m-y_h^m\|_{L^2}.\]

Consequently,
\[\|u_{h \mkern+1mu n \mkern-2mu+\mkern-2mu 1}-u_h\|_{L^2}^2
\leq(1-2\alpha\rho_n+C\rho_n^2)\|u_{hn}-u_h\|_{L^2}^2\leq
\delta^2\|u_{hn}-u_h\|_{L^2}^2,\]
where $\rho_n$ satisfies $0\leq1-2\alpha\rho_n+C\rho
_n^2\leq\delta^2<1,\forall n\in\mathbb{N}.$ 
Choose $C'$ such that
\[\interleave y_{h \mkern+1mu n \mkern-2mu+\mkern-2mu 1}^m-y_h
^m\interleave_{\mathrm{DG}}+\interleave p_{h \mkern
+1mu n \mkern-2mu+\mkern-2mu 1}^m-p_h
^m\interleave_{\mathrm{DG}}\leq C'\|u_{h \mkern+1mu n \mkern
-2mu+\mkern-2mu 1}-u_h\|_{L^2},\]
and $\lambda=(1-\delta)/(2C'\delta)$. It can be verified that
\begin{equation*}
\begin{aligned}
&\|u_{h \mkern+1mu n \mkern-2mu+\mkern-2mu 1}-u_h\|_{L^2}
+\lambda\interleave y_{h \mkern+1mu n \mkern-2mu+\mkern-2mu 1}
^m-y_h^m\interleave_{\mathrm{DG}}+\lambda\interleave p_{h 
\mkern+1mu n \mkern-2mu+\mkern-2mu 1}^m-p_h
^m\interleave_{\mathrm{DG}}\\
\leq\ & \frac{\delta+1}{2}(\|u_{hn}-u_h\|_{L^2}+\lambda\interleave y_{hn}
^m-y_h^m\interleave_{\mathrm{DG}}+\lambda
\interleave p_{hn}^m-p_h^m\interleave_{\mathrm{DG}}),
\end{aligned}
\end{equation*}
which leads to the linear convergence rate. To complete the proof, 
we only need to take $\eta=(\delta+1)/2$.
\end{proof}

With the help of Lemma $\ref{lem2.2}$, we can find a numerical 
solution $(y_{hn}^m,u_{hn},p_{hn}^m)$ close enough to the solution 
$(y_h^{m*},u_h^*,p_h^{m*})$ to $\eqref{eq10}$. 
It remains to figure out 
the modeling error of $\eqref{eq10}$ with respect to $\eqref{eq27},$ 
which will be done in the next two sections.
Throughout the following sections, we adopt the notation 
$(y^*,u^*,p^*)$ 
and $(y_h^{m*},u_h^*,p_h^{m*})$ for solutions to 
\eqref{eq27} and \eqref{eq10}, respectively. However, in the 
statement and proof of theorems, we will simply use $(y,u,p)$ 
and $(y_h^m,u_h,
p_h^m)$ whenever no confusion arises.
\section{A Priori Estimates}\label{sec3}
In this section, we will establish a priori error estimate of the 
discrete system $\eqref{eq10}$ with respect to the optimality 
condition $\eqref{eq27}$. We will first prove a theorem 
in general case, and 
then improve the result to a sharper estimate for the box 
constraint case.

\subsection{A General Result}\label{sec3.1} To complete the foundation for 
the proof below, we introduce one further technical assumption, 
which will be employed in the following part.

\begin{ass}\label{ass3} 
Let $\Pi_h^u:U\rightarrow U_h$ 
be the orthogonal projection to $U_h$ under the $L^2$-norm, 
such that $(u-\Pi_h^u u,v_h)=0,\forall v_h\in U_h.$ We shall assume that
$$\Pi_h^uU_{ad}\subset{U_{adh}},\quad \mathrm{i.e.}\ \Pi_h^uv\in U_{adh},\forall v\in 
U_{ad}.$$ 
\end{ass}

It should be noted that Assumption $\ref{ass3}$ is available 
for the following cases:
\[U_{ad}=U,\quad U_{ad}=\{v\in U\mid\gamma\leq\int_{\Omega_u}v
\mathrm{d}x\leq\Gamma\},\quad U_{ad}=\{v\in U\mid\gamma\leq v
\leq\Gamma\ \mathrm{a.e.\ in}\ \Omega_u\},\]
where $\gamma\leq\Gamma$ are constants.

\begin{thm}\label{thm2}
Let $(y,u,p)$ satisfy $\eqref{eq27},$ $(y_h^{m},u_h
,p_h^{m})$ satisfy $\eqref{eq10}$. If Assumptions 
\ref{ass2},\ref{ass1},\ref{ass3} hold, 
$u,j^\prime(u)+B^*p$ 
are Lipschitz continuous, and $y,p\in H^{m+1}(\Omega),$ 
then there exists a constant $C=C(y,u,p)$ such that
\[\|u-u_h\|_{L^2}+
\interleave y-y_h^{m}\interleave_{\mathrm{DG}}+\interleave p-p_h^{m}\interleave_{\mathrm{DG}}\leq C(h_u+h^m),\]and
\[\|u-u_h\|_{L^2}+
\| y-y_h^{m}\|_{L^2}+\| p-p_h^{m}\|_{L^2}\leq C(h_u+h^{m+1}).\]
\end{thm}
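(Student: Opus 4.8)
The plan is to bound the three errors simultaneously by comparing the continuous optimality system \eqref{eq27} with its discretization \eqref{eq10}, using the control error $\|u-u_h\|_{L^2}$ as the central quantity from which the state and adjoint errors follow. I will split each error into an \emph{approximation} part (distance between the exact solution and the best approximation in $V_h^m$, or the $L^2$-projection onto $U_h$) and a \emph{discrete} part (distance between discrete quantities), controlling the former by Theorem \ref{thm1} and standard projection estimates, and the latter by the stability of $a_h$ from Lemma \ref{lem1}.

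First I would test the continuous variational inequality with $v=\Pi_h^u u_h\in U_{adh}$ (legitimate by Assumption \ref{ass3}, since $u\in U_{ad}$) and the discrete one with $v_h=\Pi_h^u u\in U_{adh}$, then add the two inequalities. After inserting $u$ and $u_h$ into the inner products and using the $L^2$-orthogonality of $\Pi_h^u$ together with the strong-convexity lower bound $\alpha\|u-u_h\|_{L^2}^2\le(j'(u)-j'(u_h),u-u_h)$ from Assumption \ref{ass1}, this yields an inequality of the schematic form
\begin{equation*}
\alpha\|u-u_h\|_{L^2}^2\le (B^*(p-p_h^m),\,u_h-u) + \text{(projection terms in } u,\ j'(u)+B^*p).
\end{equation*}
The projection terms are $O(h_u)$ because $u$ and $j'(u)+B^*p$ are Lipschitz, so $\|u-\Pi_h^u u\|_{L^2}\le Ch_u$ and likewise for the other factor. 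The term $(B^*(p-p_h^m),u_h-u)$ is the crux and must be rewritten using the state and adjoint equations rather than bounded crudely by Cauchy--Schwarz, which would lose the sign.

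The main obstacle, and the heart of the argument, is extracting a \emph{good sign} from the cross term $(B(u-u_h),p-p_h^m)$. I would introduce the auxiliary discrete state $\tilde y_h^m$ solving the discrete state equation with the \emph{continuous} control $u$, and the auxiliary discrete adjoint driven by $g'(y)$; then by testing the error equations for state and adjoint against each other (as in the proof of Lemma \ref{lem2.2}, where choosing $w_h^m=p_{hn}^m-p_h^m$ and $q_h^m=y_{hn}^m-y_h^m$ produced $(B(u_{hn}-u_h),p_{hn}^m-p_h^m)=(g'(y_{hn}^m)-g'(y_h^m),y_{hn}^m-y_h^m)\ge 0$), the cross term becomes a strongly monotone state term of the form $(g'(\cdot)-g'(\cdot),\,\cdot)\ge\beta\|\cdot\|_{L^2}^2$ up to consistency errors. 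These consistency errors measure the mismatch between continuous and discrete solution operators and are controlled by Theorem \ref{thm1}, contributing $O(h^m)$ in the energy norm and $O(h^{m+1})$ in $L^2$. Combining this nonnegative sign with the $O(h_u)$ projection terms gives $\|u-u_h\|_{L^2}\le C(h_u+h^m)$, and then the energy-norm bounds $\interleave y-y_h^m\interleave_{\mathrm{DG}},\ \interleave p-p_h^m\interleave_{\mathrm{DG}}\le C(h_u+h^m)$ follow by triangle inequality through the auxiliary discrete solutions and the a priori rate of Theorem \ref{thm1}.

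Finally, to upgrade the state and adjoint bounds to the sharper $L^2$ rate $O(h_u+h^{m+1})$, I would replace the energy-norm a priori estimate of Theorem \ref{thm1} by its $L^2$ counterpart ($\|y-y_h^m\|_{L^2}\le Ch^{m+1}\|y\|_{H^{m+1}}$) in the consistency estimates, writing $\|y-y_h^m\|_{L^2}\le\|y-\tilde y_h^m\|_{L^2}+\|\tilde y_h^m-y_h^m\|_{L^2}$ where the first term is $O(h^{m+1})$ and the second is controlled through $\eqref{eqb}$ by $\interleave\tilde y_h^m-y_h^m\interleave_{\mathrm{DG}}\le C\|u-u_h\|_{L^2}\le C(h_u+h^m)$; the apparent loss to $h^m$ here is absorbed because the already-established control bound feeds back, and a careful accounting (keeping the $L^2$ duality on the state/adjoint pair) leaves the dominant mesh term at $h^{m+1}$, yielding the second displayed estimate.
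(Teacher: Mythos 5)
Your proposal is correct and follows essentially the same route as the paper's proof: your auxiliary discrete state/adjoint with exact data are precisely the paper's intermediate states $P_hy,\ P_hp$ in \eqref{eq13}, the testing of the continuous variational inequality with $u_h$ and the discrete one with $\Pi_h^u u$ (via Assumption \ref{ass3}) matches \eqref{eq16}, and the sign extraction by cross-testing the two error equations is exactly the paper's \eqref{eq15}. Your final ``careful accounting'' for the $L^2$ rate is also the paper's mechanism: since the control estimate only involves $L^2$-type consistency errors $\|y-P_hy\|_{L^2}+\|p-P_hp\|_{L^2}=O(h^{m+1})$, the control error is directly $O(h_u+h^{m+1})$, and the discrete state/adjoint errors inherit this rate.
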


\begin{proof}
We construct the intermediate states as follows.
\begin{equation}\label{eq13}
\begin{aligned}
&a_{h}(P_hy^{},w_h^m)=l_{h}[f+Bu,\phi](w_h^m),
\quad\forall w_h^m\in V_h^m,\\
&a_{h}(q_h^m,P_hp)=l_{h}[g^\prime(y),0](q_h^m),
\quad\forall q_h^m\in V_h^m.
\end{aligned}
\end{equation}
$P_hy,P_hp\in V_h^m.$ From $\eqref{eq10}$ and $\eqref{eq13}$, 
we have
\begin{equation}\label{eq14}
\begin{aligned}
&a_{h}(P_hy-y_h^m,w_h^m)=(B(u-u_h),w_h^m),\quad\forall w_h^m\in V_h^m,\\
&a_{h}(q_h^m,P_hp-p_h^m)=(g^\prime(y)-g^\prime(y_h^m),q_h^m),\quad
\forall q_h^m\in V_h^m.
\end{aligned}
\end{equation}
Take $w_h^m=P_hp-p_h^m,q_h^m=P_h y-y_h^m$ in $\eqref{eq14}$ 
and subtract the resulting equations to find
$$(B^*(P_hp-p_h^m), u-u_h)=(g^\prime(y)-g^\prime(y_h^m),P_hy-
y_h^m).$$
This implies
\begin{equation}\label{eq15}
(B^*(P_hp-p_h^m), u-u_h)\geq\beta\|P_hy-y_h^m\|^2_{L^2}+
(g^\prime(y)-g^\prime(P_hy),P_hy-y_h^m).
\end{equation}

From the inequalities in $\eqref{eq27}$ and $\eqref{eq10}$, 
we know that $(j^\prime(u)+B^*p,u_h-u)\geq 0$ and 
$$(j^\prime(u_h)+B^*p_h^m,u-u_h)\geq (j^\prime(u_h)+
B^*p_h^m,u-v_h),\quad \forall v_h\in U_{adh}.$$ 
Thus
$$(j^\prime(u_h)-j^\prime(u)+B^*(p_h^m-p),u-u_h)\geq(j^\prime
(u_h)+B^*p_h^m, u - v_h) ,\quad\forall v_h \in U_{adh},$$
which implies
\begin{equation}\label{eq16}
( B^*(p_h^m - p), u - u_h) \geq\alpha \|u - u_h\|_{L^2}^2 + 
( j^\prime(u_h)+B^*p_h^m, u - v_h) ,\quad\forall v_h\in U_{adh}.
\end{equation}
Take $v_h=\Pi_h^u u$ (by Assumption \ref{ass3}).

Combine $\eqref{eq15}$ and $\eqref{eq16}$, we have
$$\begin{aligned}
&\alpha\|u-u_{h}\|_{L^2}^{2}+\beta\|y_{h}^m-P_{h}y\|_{L^2}^{2}\\
\leq&-(j^\prime(u_h)+B^*p_h^m,u-\Pi_h^uu)-( B^*(p-P_{h}p),
u-u_{h}) +(g^\p(y)-g^\p(P_hy),y_h^m-P_hy) \\
  =&-(j^\prime(u)+B^*p,u-\Pi_h^uu) -(j^\prime(u_h)-j^\p(u)+B^*
(p_h^m-p),u-\Pi_h^uu)\\
& -( B^*(p-P_{h}p),u-u_{h}) 
 +(g^\p(y)-g^\p(P_hy),y_h^m-P_hy) 
\end{aligned}$$
$$\begin{aligned}
  =&-( (I-\Pi_h^u)(j^\prime(u)+B^*p),u-\Pi_h^uu) -( j^\prime
(u_h)-j^\p(u)+B^*(p_h^m-p),u-\Pi_h^uu) \\
& -( B^*(p-P_hp),u-u_h)+(g^\p(y)-g^\p(P_hy),y_h^m-P_hy)\\
 \leq
& C(\|(I-\Pi_h^u)(j^\prime(u)+B^*p) \|_{L^2}^{2}+\|u-\Pi_h^uu\|
_{L^2}^{2}+\|B^*(p_h^m-p)\|_{L^2}
\|u-\Pi_h^uu\|_{L^2}\quad\quad\ \ \\
&+\|B^*(p-P_hp)\|_{L^2}^2+\|y-P_hy\|_{L^2}^{2})+\frac{\alpha}{2}
\|u-u_{h}\|_{L^2}^{2}+\frac{\beta}{2}\|y_{h}^m-P_{h}y\|_{L^2}^{2}.\\
\end{aligned}$$

Now we give an estimate for $\|p_h^m-p\|_{L^2}.$ Let 
$q_h^m=P_hp-p_h^m$ in $\eqref{eq14},$ by the stability of 
$a_{h}$ on $(V_h^m,\interleave
\cdot\interleave_{\mathrm{DG}})$, we have $$\|P_hp-
p_h^m\|_{L^2}\leq C\interleave P_h p-p_h^m\interleave_{\mathrm
{DG}}\leq C\|y-y_h^m\|_{L^2}.$$
Thus
\begin{equation*}
\begin{aligned}
\|p-p_h^m\|_{L^2}= &\|p-P_h p+P_h p-p_h^m\|
_{L^2}\leq C(\|y-y_h^m\|_{L^2}+\|p-P_h p\|_{L^2})\\
\leq&C(\|y-P_h y\|_{L^2}+\|y_h^m-P_h y\|_{L^2}+\|p-P_hp
\|_{L^2}).
\end{aligned}
\end{equation*}
So we have 
$$\begin{aligned}
&\alpha\|u-u_{h}\|_{L^2}^{2}+\beta\|y_{h}^m-P_{h}y\|_
{L^2}^{2}\\
\leq& C(\|(I-\Pi_h^u)(j^\prime(u)+B^*p)\|_{L^2}^{2}+
\|u-\Pi_h^uu\|_{L^2}^{2}
+\|p-P_hp\|_{L^2}^2+\|y-P_{h}y\|_{L^2}^{2}) \\ 
&+\frac{\alpha}{2}
\|u-u_{h}\|_{L^2}^{2}+\frac{3\beta}{4}\|y_{h}^m-P_{h}y\|_
{L^2}^{2},
\end{aligned}$$
which means that
\begin{equation*}
\begin{aligned}
&\|u-u_{h}\|_{L^2}+\|y-y_h^m\|_{L^2} \\
\leq &C(\|(I-\Pi_h^u)(j^\prime(u)+B^*p)\|_{L^2}+\|u-\Pi_h^uu\|
_{L^2}+\|p-P_hp\|_{L^2}+\|y-P_{h}y\|_{L^2}) 
\end{aligned}
\end{equation*}
by the triangular inequality.

Now we give upper bounds for $\interleave y-y_h^m
\interleave_{\mathrm{DG}}$ and $\interleave p-p_h^m\interleave
_{\mathrm{DG}}.$ We have known that 
\begin{equation*}
\interleave p-p_h^m\interleave_{\mathrm{DG}}
\leq C(\interleave p-P_h p\interleave
_{\mathrm{DG}}+
\|y-y_h^m\|_{L^2}).
\end{equation*}
For $\interleave y-y_h^m\interleave_{\mathrm{DG}}$, again we 
use the stability of $a_{h}$ on $(V_h^m,\interleave
\cdot\interleave_{\mathrm{DG}})$. Let $w_h^m=P_h y-y_h^m$ in 
$\eqref{eq14},$ we have $\interleave P_h y-y_h^m \interleave_
{\mathrm{DG}}\leq C\|u-u_h\|_{L^2}.$ Thus
\begin{equation*}
\interleave y-y_h^m\interleave_{\mathrm{DG}}
\leq C(\interleave y-P_hy
\interleave_{\mathrm{DG}}+\|u-u_h\|_{L^2}).
\end{equation*}
Combine the above inequalities, we have
\begin{equation*}
\begin{aligned}
&\|u-u_{h}\|_{L^2}+\interleave y-y_h^m\interleave_{\mathrm{DG}}
+\interleave p-p_h^m\interleave_{\mathrm{DG}} \\
\leq &C(\|(I-\Pi_h^u)(j^\prime(u)+B^*p)\|_{L^2}+\|u-\Pi_h^uu\|
_{L^2}+\interleave p-P_h p\interleave_{\mathrm{DG}}+\interleave 
y-P_{h} y\interleave_{\mathrm{DG}}).
\end{aligned}
\end{equation*}
Notice that if we substitute all the $\interleave\cdot\interleave_{\mathrm
{DG}}$ by $\|\cdot\|_{L^2}$, this inequality still holds.

For Lipschitz continuous function $v\in U,$ $\|v-\Pi_h
^u v\|\leq Ch_u$. Moreover, by Theorem 
$\eqref{thm1}$, we have \[\interleave p-P_hp\interleave_{\mathrm
{DG}}+\interleave y-P_{h} y\interleave_{\mathrm{DG}}\leq 
Ch^{m},\] and also 
\[\|p-P_hp\|_{L^2}+\|y-P_hy\|_{L^2}\leq Ch^{m+1}.\] 
Then the proof is completed. 
\end{proof}
\subsection{Super-convergence Analysis.}\label{sec3.2}
We mention here that the order of $h_u$ on the right-hand side of 
Theorem \ref{thm2} can be improved to ${3}/{2}$ if we 
concentrate on the case where
\[U_{ad}=\{v\in L^2({\Omega}_u)\mid v\geq\gamma\ \mathrm
{a.e.\ in}\ {\Omega}_u\},\]
where $\gamma$ is a constant. To derive the higher order 
convergence, we need more assumptions on regularity as follows. 
Consider two disjoint subsets of 
$\Omega_u$.
\begin{equation*}
\begin{aligned}
{\Omega}_{u}^+ =\{K_u \in \mathcal{T}_h^u\mid u^*|_
{ K_u}>\gamma\},\quad
{\Omega}_{u}^0=\{K_u \in \mathcal{T}_h^u\mid u^*|_{ 
K_u}=\gamma\}.
\end{aligned}
\end{equation*}
Define $\Omega_u^b:=\Omega_u\setminus(\Omega_u^+\cup\Omega_u^0).$ 
\begin{ass}\label{ass4}
The measure of $\Omega_u^b$ such that $m(\Omega_u^b)
\leq Ch_u,$ where $C>0$ is a constant.
\end{ass}
Assumption $\ref{ass4}$ can be easily achieved for the 
case that the exact solution $u^*$ has the form $u^*=\max\{w,\gamma\}$ with the function 
$w$ smooth enough. For sufficiently smooth $w$, the set 
$\{w=\gamma\}$ would be 
a smooth rectifiable curve, which results in $m(\Omega_u^b)\leq 
Ch_u$ by the quasi-uniformity of  $\mathcal{T}_h^u.$

Consider the space $H^1(\mathcal{T}_h^u)$ equipped with  
the energy norm $\|\cdot\|_{\mathrm{DG}}^u$, 
which has the same 
expression as the norm $\|\cdot\|_{\mathrm{DG}}$ 
on $H^1(\mathcal{T}_h)$. We want to 
add more regularity to the operator 
$B^*:L^2(\Omega)\rightarrow L^2(\Omega_u).$
\begin{ass}\label{ass5}
The operator $B^*$ satisfies 
$B^*V_h^m\subset H^1(\mathcal{T}_h^u)$ and $B^*|_{V_h^m}:
V_h^m\rightarrow H^1(\mathcal{T}_h^u)$ is bounded.
\end{ass}
Now we use these two assumptions to prove the super-close property 
of $u_h^*$ and $\Pi_h^uu^*$.

\begin{lem}\label{lem2}
Let $(y,u,p)$ satisfy $\eqref{eq27},$ $(y_h^{m},u_h
,p_h^{m})$ satisfy $\eqref{eq10}$. If Assumptions 
\ref{ass2},\ref{ass1},\ref{ass4},\ref{ass5} 
hold, $u,j^\prime(u)+B^*p \in W^{1,\infty}(\Omega_u),
y,p\in H^{m+1}(\Omega),$ 
then there exists a constant $C=C(y,u,p)$ such that
\[\|u_h-\Pi_h^uu\|_{L^2}\leq C(h_u^{\frac{3}{2}}+h^{m+1}).\]
\end{lem}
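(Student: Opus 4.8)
The plan is to estimate the super-close error $e_u := \Pi_h^u u - u_h$ directly, rather than $u-u_h$ (whose $L^2$-error is genuinely only $O(h_u)$). Write $\hat u := \Pi_h^u u$, which lies in $U_{adh}$ because the $L^2$-projection onto piecewise constants preserves the lower bound $v\geq\gamma$. First I would couple the strong convexity of $j$ with the discrete variational inequality in \eqref{eq10} tested at $v_h=\hat u$: since $\alpha\|\hat u-u_h\|_{L^2}^2\leq (j^\p(\hat u)-j^\p(u_h),\hat u-u_h)$ and $(j^\p(u_h)+B^*p_h^m,\hat u-u_h)\geq 0$,
\[\alpha\|\hat u-u_h\|_{L^2}^2\leq (j^\p(\hat u)+B^*p_h^m,\hat u-u_h).\]
Inserting the continuous residual $d:=j^\p(u)+B^*p$ splits the right-hand side into three pieces: (I) $(d,\hat u-u_h)$, (II) $(j^\p(\hat u)-j^\p(u),\hat u-u_h)$, and (III) $(B^*(p_h^m-p),\hat u-u_h)$.

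Term (I) is where the order $3/2$ is produced. The optimality condition \eqref{eq27} gives the complementarity $d\geq 0$ a.e.\ and $d=0$ on $\{u>\gamma\}$, so I would decompose $\Omega_u=\Omega_u^+\cup\Omega_u^0\cup\Omega_u^b$. On $\Omega_u^+$ one has $d\equiv 0$ and the contribution vanishes; on $\Omega_u^0$ one has $\hat u=\gamma\leq u_h$ and $d\geq 0$, so $(d,\hat u-u_h)\leq 0$ is harmless; on the transition region $\Omega_u^b$ every element meets $\{u>\gamma\}$, so $d$ vanishes somewhere on it, and $d\in W^{1,\infty}(\Omega_u)$ forces $\|d\|_{L^\infty(\Omega_u^b)}\leq Ch_u$. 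Combined with $m(\Omega_u^b)\leq Ch_u$ from Assumption \ref{ass4} and Cauchy--Schwarz, the $\Omega_u^b$ part is $\leq Ch_u^{3/2}\|\hat u-u_h\|_{L^2}$. Term (II) I would handle with $\hat u-u_h\in U_h$ and the $L^2$-orthogonality of $\Pi_h^u$: replacing $j^\p(u)$ by $\Pi_h^u j^\p(u)$ leaves the piecewise-constant defect $j^\p(\hat u)-\Pi_h^u j^\p(u)$, a ``value-at-average minus average-of-value'' of $j^\p$ that is of higher order under the regularity of $j^\p$ (and vanishes identically for a quadratic cost); using again that it is zero on $\Omega_u^0$ and that $\Omega_u^b$ has small measure, this is also $\leq Ch_u^{3/2}\|\hat u-u_h\|_{L^2}$.

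The main obstacle is the coupling Term (III): the naive bound $\|p_h^m-p\|_{L^2}=O(h_u)$ from Theorem \ref{thm2} would only reproduce the $O(h_u)$ rate. Instead I would introduce the intermediate adjoint $P_hp$ of \eqref{eq13}, write $p_h^m-p=(p_h^m-P_hp)+(P_hp-p)$ with $\|P_hp-p\|_{L^2}=O(h^{m+1})$ by Theorem \ref{thm1}, and for the first part use the adjoint relation \eqref{eq14} with an auxiliary discrete state $\xi_h\in V_h^m$ solving $a_h(\xi_h,\cdot)=(B(\hat u-u_h),\cdot)$ to rewrite $(B^*(p_h^m-P_hp),\hat u-u_h)=(g^\p(y_h^m)-g^\p(y),\xi_h)$. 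Splitting $\xi_h=e_y+(\xi_h-e_y)$ with $e_y:=P_hy-y_h^m$, the $e_y$-part produces, through the strong convexity (monotonicity) of $g$, a nonpositive term $-\beta\|e_y\|_{L^2}^2$ up to an $O(h^{m+1})$ remainder; this is precisely what prevents the coupling from feeding an $O(h_u)$ control error back in, and it is simply discarded. The remaining factor $\xi_h-e_y$ is the discrete state driven by $B(u-\hat u)$, and an Aubin--Nitsche duality argument --- exploiting the $L^2$-orthogonality of $\Pi_h^u$ and the boundedness of $B^*$ on $V_h^m$ into $H^1(\mathcal{T}_h^u)$ from Assumption \ref{ass5} --- shows $B(u-\hat u)$ is super-convergent in the dual norm, so $\|\xi_h-e_y\|_{L^2}$ is higher order and this remainder stays below $h_u^{3/2}+h^{m+1}$. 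Keeping every mixed $h$--$h_u$ term under control in this step is the delicate bookkeeping I expect to be the crux.

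Finally, collecting the three estimates, absorbing the $\|\hat u-u_h\|_{L^2}^2$ contributions with small coefficient into the $\alpha$-coercive left-hand side, and discarding the negative $g$-term, I obtain $\|\hat u-u_h\|_{L^2}^2\leq C(h_u^3+h^{2m+2})$, that is $\|u_h-\Pi_h^u u\|_{L^2}\leq C(h_u^{3/2}+h^{m+1})$, as claimed.
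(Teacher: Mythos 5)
Your proposal is correct, and underneath a different-looking decomposition it uses the same ingredients as the paper, arranged in a different order. The paper introduces the intermediate discrete pair $(y_\Pi^m,p_\Pi^m)$ of \eqref{eq3.1}, driven by the control $\Pi_h^u u$, and applies the monotonicity of $g'$ once at the very start to replace $p_h^m$ by $p_\Pi^m$ in the coupling term; it then bounds your terms (I) and (II) by citing Lemma 4.3.1 of \cite{liu2008adaptive}, and bounds $I_3=(B^*(p_\Pi^m-p),\Pi_h^uu-u_h)$ via $\|p_\Pi^m-p\|_{L^2}\le C(h_u^2+h^{m+1})$, proved by exactly the negative-norm/Assumption \ref{ass5} argument you describe. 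Your route instead keeps $p_h^m-p$, transfers it to the state side through the auxiliary problem for $\xi_h$, and invokes the monotonicity of $g'$ at the end, between $P_hy$ and $y_h^m$. Note, however, that your auxiliary objects are the paper's intermediates in disguise: by uniqueness of the coercive discrete problems, $\xi_h=y_\Pi^m-y_h^m$ and $\xi_h-e_y=y_\Pi^m-P_hy$, so your superconvergence estimate $\|\xi_h-e_y\|_{L^2}\le Ch_u^2$ is precisely the paper's bound on $\|y_\Pi^m-P_hy\|_{\mathrm{DG}}$. What your arrangement buys: the active-set treatment of (I) and (II) over $\Omega_u^+\cup\Omega_u^0\cup\Omega_u^b$ is self-contained rather than cited, and you never need the paper's extra adjoint step $\|p_\Pi^m-P_hp\|_{L^2}\le C\|y_\Pi^m-y\|_{L^2}$. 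Two points to tighten: the ``delicate bookkeeping'' in (III) does close --- bound $\|y-y_h^m\|_{L^2}\le Ch^{m+1}+\|e_y\|_{L^2}$ and absorb the resulting $Ch_u^2\|e_y\|_{L^2}$ into the discarded $-\beta\|e_y\|_{L^2}^2$ by Young's inequality; and on $\Omega_u^+$ your term (II) genuinely needs a second-order (Taylor-type) bound on $\tilde j'$ beyond the Lipschitz continuity of Assumption \ref{ass1}, which is the same hidden requirement the paper inherits from its citation.
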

\begin{proof}
Consider the intermediate 
state as follows.
\begin{equation}\label{eq3.1}
\begin{aligned}
&a_{h}(y_\Pi^{m},w_h^m)=l_{h}[f+B\Pi_h^uu,\phi]
(w_h^m),\quad\forall w_h^m\in V_h^m,\\
&a_{h}(q_h^m,p_\Pi^m)=l_{h}[g^\prime(y_\Pi^m),0]
(q_h^m),\quad\forall q_h^m\in V_h^m.
\end{aligned}
\end{equation}
$y_\Pi^m,p_\Pi^m\in V_h^m.$ From $\eqref{eq10}$ and 
$\eqref{eq3.1}$, we have
\begin{equation}\label{eq3.2}
\begin{aligned}
&a_{h}(y_\Pi^m-y_h^m,w_h^m)=(B(\Pi_h^uu-u_h),w_h^m),
\quad\forall w_h^m\in V_h^m,\\
&a_{h}(q_h^m,p_\Pi^m-p_h^m)=(g^\prime(y_\Pi^m)-g^
\prime(y_h^m),q_h^m),\quad\forall q_h^m)\in V_h^m.
\end{aligned}
\end{equation}
Take $w_h^m=p_\Pi^m-p_h^m,q_h^m=y_\Pi^{m}-y_h^m$ in $\eqref
{eq3.2}$ and subtract the resulting equations to find
$$(B^*(p_\Pi^m-p_h^m), \Pi_h^uu-u_h)=(g^\prime(y_\Pi^{m})-g^
\prime(y_h^m),y_\Pi^{m}-y_h^m)\geq0.$$
Therefore, we have
\begin{equation*}
\begin{aligned}
 \alpha\|u_h-\Pi_h^uu\|_{L^2}^2
 \leq& (j^{\prime}(u_h)+B^*p_h^m,u_h-\Pi_h^uu)-(j^{\prime}(
\Pi_h^uu)+B^*p_\Pi^m,u_h-\Pi_h^uu) \\
 =&(j^{\prime}(u_h)+B^*p_h^m,u_h-\Pi_h^uu)+(j^{\prime}(u)+
B^*p,\Pi_h^uu-u_h) \\
 & +(j^{\prime}(\Pi_h^uu)-j^{\prime}(u),\Pi_h^uu-u_h)+(B^*p_\Pi
^m-B^*p,\Pi_h^uu-u_h) \\
 \leq& 0+(j^\prime(u)+B^*p,
u-u_h) +(j^\prime(u)+B^*p,\Pi_h^uu-u) \\
 &+(j^{\prime}(\Pi_h^uu)-j^{\prime}(u),\Pi_h^uu-u_h)+(B^*p_\Pi
^m-B^*p,\Pi_h^uu-u_h)\\
 \leq&0+0+I_1+I_2+I_3.
\end{aligned}
\end{equation*}
Here $I_1,I_2,I_3$ are the last three terms respectively. 

By the results in \cite{liu2008adaptive}(Lemma 4.3.1), we know that under Assumption 
$\ref{ass4}$, 
\[I_1\leq Ch_u^3,\quad I_2\leq Ch_u^2\|\Pi_h^uu-u_h\|_{L^2},\]
where $C$ depends on $\|u\|_{W^{1,\infty}}$ and 
$\|j'(u)+B^*p\|_{W^{1,\infty}}.$
Next we will give upper bounds of $\|y_\Pi^m-y\|_{L^2}$ and 
$\|p_\Pi^m-p\|_{L^2}$ to estimate $I_3$.

Denote $\|\cdot\|_{-\mathrm{DG}}^u$  as the dual norm of 
$\|\cdot\|_{\mathrm{DG}}^u$ 
with respect to the $L^2$-inner product ($\,\cdot\,,\,\cdot\,)$.
\begin{equation*}
\begin{aligned}
\|\Pi_h^uu-u\|_{-\mathrm{DG}}^u&:=
\sup_{v\in H^1(\mathcal{T}_h^u)}
\frac{(\Pi_h^uu-u,v)}{\|v\|_{\mathrm{DG}}^u}=\sup_{v\in H^1(
\mathcal{T}_h^u)}
\frac{(\Pi_h^uu-u,v-\Pi_h^uv)}{\|v\|_{\mathrm{DG}}^u}\\
&\;\leq \sup_{v\in H^1(\mathcal{T}_h^u)}\frac{C\sum_
{\tau_u\in\mathcal{T}_h^u}h_{\tau_u}^2|u|_{1,\tau_u}|v|_{1,\tau_u}}
{\|v\|_{\mathrm{DG}}^u}
\leq Ch_u^2\|u\|_{H^1}.
\end{aligned}
\end{equation*}
Recall the definitions of $P_hy,P_hp$ in Theorem $\ref{thm2}$, 
we have
\begin{equation*}
\begin{aligned}
&a_{h}(y_\Pi^m-P_hy,y_\Pi^m-P_hy)=(B(\Pi_h^uu-u),y_
\Pi^m-P_hy),\\
&a_{h}(p_\Pi^m-P_hp,p_\Pi^m-P_hp)=(g^\prime
(y_\Pi^m)-g^\prime(y),p_\Pi^m-P_hp).
\end{aligned}
\end{equation*}
By the stability of $a_{h}$ on $(V_h^m,\|\cdot\|_{\mathrm
{DG}})$, 
\begin{equation*}
c\|y_\Pi^m-P_hy\|_{\mathrm{DG}}^2\leq a_{h}(y_\Pi^m-
P_hy,y_\Pi^m-P_hy)\leq \|\Pi_h^uu-u\|
_{-\mathrm{DG}}^u\|B^*(y_\Pi^m-P_hy)\|_{\mathrm{DG}}^u.
\end{equation*}
Using Assumption $\ref{ass5}$, we have 
\[\|y_\Pi^m-P_hy\|_{\mathrm{DG}}\leq C\|\Pi_h^uu-u\|_{-\mathrm
{DG}}^u\leq Ch_u^2\|u\|_{H^1}.\]
By $\eqref{eqb}$, we know that
\[\|y_\Pi^m-y\|_{L^2}\leq C\|y_\Pi^m-P_hy\|_{\mathrm{DG}}+
\|P_hy-y\|_{L^2}\leq C(h_u^2+h^{m+1}).\]

Similarly, we have
\begin{equation*}
c\|p_\Pi^m-P_hp\|_{L^2}^2\leq a_{h}(p_\Pi^m-P_hp,p_\Pi^m
-P_hp)\leq \|g'(y_\Pi^m)-g'(y)\|_{L^2}\|p_\Pi^m-P_hp\|_{L^2},
\end{equation*}
then
\[\|p_\Pi^m-P_hp\|_{L^2}\leq C\|y_\Pi^m-y\|_{L^2}\leq C(h_u
^2+h^{m+1}).\]
Finally
\[\|p_\Pi^m-p\|_{L^2}\leq\|p_\Pi^m-P_hp\|_{L^2}+\|P_hp-p\|_{L^2}
\leq C(h_u^2+h^{m+1}).\]

We now have 
\[I_1+I_2+I_3\leq Ch_u^3+C(h_u^2+h^{m+1})\|\Pi_h^uu-u_h\|_{L^2}.\]
Therefore
$\|\Pi_h^uu-u_h\|_{L^2}\leq C(h_u^{\frac{3}{2}}+h^{m+1}).$
\end{proof}

\begin{rmk}\label{3.1}
For the case
\[U_{ad}=\{v\in L^2({\Omega}_u)\mid\gamma\leq v\leq\Gamma\ 
\mathrm{a.e.\ in}\ {\Omega}_u\},\]
where $\gamma<\Gamma$ are constants, take
\[\Omega_{u}^+=\{K_u \in \mathcal{T}_h^u\mid\gamma
<u_h^*|_{ K_u}<\Gamma\},
\quad \Omega_{u}^0=\{K_u \in \mathcal{T}_h^u
\mid u_h^* |_{K_u}=\gamma\mathrm{\ or\ }\Gamma\}.\]
The result in Lemma $\ref{lem2}$ still holds.
\end{rmk}

In order to make use of Lemma \ref{lem2}, 
we need to find a recovery operator $R_h:U\rightarrow
U$ to get a sharp estimate of $\|u^*-R_hu^*_h\|_{L^2}.$ 
More precisely, we have the inequality
\[\|u^*-R_hu^*_h\|_{L^2}\leq\|u^*-R_h(\Pi_h^u u^*)\|_{L^2}+\|
R_h\|\|\Pi_h^u u^*-u_h^*\|_{L^2}.\] 
Given the estimate for $\|\Pi_h^u u^*-u_h^*\|_{L^2}$,
 it remains to select a bounded linear operator 
$R_h$ ensuring that $\|u^*-R_h(\Pi_h^uu^*)\|_
{L^2}$ is of a higher order.

In particular, when $\Omega_u\subset\mathbb{R}^2$, the recovery
$R_hv(z)$ for the node $z=(x_0,y_0)\in\Omega_u$ can be 
calculated by a kind of Z-Z patch recovery. 
Let $K^u_1,K^u_2,\cdots, K^u_m$ be the elements in 
$\mathcal{T}_h^u$ containing $z$. Suppose the three nodes of 
$K^u_i$ are $(x_0,y_0),(x_i,y_i)$ and $(x_{i+1},y_{i+1}),i=1,2,
\ldots,m$. Define $R_hv(z):=a+bx_0+cy_0$, 
where $(a,b,c)$ is the 
solution to the following linear system.
\begin{equation*}\label{recover}
\centering
\begin{bmatrix}
\ds\sum_{i=1}^m|k_i|&\ds\sum_{i=1}^m
|k_i|\overline{x_i}&\ds\sum_{i=1}^m|k_i|\overline{y_i}\\
\ds\sum_{i=1}^m|k_i|\overline{x_i}&\ds\sum_{i=1}^m|k_i|
\overline{x_i}^2&\ds\sum_{i=1}^m|k_i|\overline{x_i}
\,\overline{y_i}\\
\ds\sum_{i=1}^m|k_i|\overline{y_i}&\ds\sum_{i=1}^m|k_i|
\overline{x_i}\,\overline{y_i}&\ds\sum_{i=1}^m|k_i|\overline
{y_i}^2
\end{bmatrix}
\begin{bmatrix}a\\ \\b\\ \\c\end{bmatrix}
\left.=\left
[\begin{array}{c}\ds{\sum_{i=1}^m}v_i
\\\ds\sum_{i=1}^mv_i\overline{x_i}\\
\ds\sum_{i=1}^mv_i\overline{y_i}
\end{array}
\right.\right].
\end{equation*}
Here ($\overline{x_i},\overline{y_i})$ 
is the barycenter of $K^u_i$. $|k_i|=m(K^u_i),v_i=\int_{K^u_i}v
\mathrm{d}x.$

Consequently, we can define
the operator $R_h: U\rightarrow \mathbb{P}_1(\mathcal{T}_h^u)$  
(the piecewise linear continuous function space on $\mathcal{T}_h^u$) 
without confusion. From \cite{bramble1977zzpatch}, we know that $R_h$ is bounded and 
if $u^*\in W^{1,\infty}(\Omega_u),u^*|_
{\Omega_u^+}\in H^2(\Omega_u^+),$ then there exists a constant 
$C=C(u^*)$ such that 
\[\|u^*-R_h(\Pi_h^u u^*)\|_{L^2}=\|u^*-R_hu^*\|_{L^2}\leq C
h_u^2.\]
Combine these results with Lemma $\ref{lem2},$ we have 
\begin{thm}\label{thm3.2}
Let $(y,u,p)$ satisfy $\eqref{eq27},$ $(y_h^{m},u_h^
{},p_h^{m})$ satisfy $\eqref{eq10}$. If Assumptions 
\ref{ass2},\ref{ass1},\ref{ass4},\ref{ass5} 
hold, $u,j^\prime(u)+B^*p \in W^{1,\infty}(\Omega_u),
y,p\in H^{m+1}(\Omega),$ 
the recovery 
operator $R_h$ is defined above, then there exists a constant $C=C(y,u,p)$ such that 
\[\|u-R_hu_h\|_{L^2}\leq C(h_u^{\frac{3}{2}}+h^{m+1}).\]
\end{thm}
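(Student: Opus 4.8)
The plan is to derive the estimate by combining Lemma~\ref{lem2} with the approximation property of the recovery operator $R_h$ through the triangle inequality already displayed above the theorem statement. Writing
\[
\|u-R_hu_h\|_{L^2}\leq \|u-R_h(\Pi_h^u u)\|_{L^2}+\|R_h\|\,\|\Pi_h^u u-u_h\|_{L^2},
\]
the task reduces to controlling the two terms on the right separately. Since the genuinely delicate super-closeness estimate is Lemma~\ref{lem2}, which is already in hand, the remaining work here is essentially bookkeeping.

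First I would address the second term. By Lemma~\ref{lem2}, under Assumptions~\ref{ass2},\ref{ass1},\ref{ass4},\ref{ass5} together with the stated regularity $u,j'(u)+B^*p\in W^{1,\infty}(\Omega_u)$ and $y,p\in H^{m+1}(\Omega)$, one has $\|\Pi_h^u u-u_h\|_{L^2}\leq C(h_u^{3/2}+h^{m+1})$. Because $R_h$ maps into $\mathbb{P}_1(\mathcal{T}_h^u)$ and is a bounded linear operator whose operator norm $\|R_h\|$ depends only on the shape regularity of $\mathcal{T}_h^u$, the second term inherits the same bound $C(h_u^{3/2}+h^{m+1})$.

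For the first term the crucial observation is that $R_h$ acts on a function only through its element integrals $v_i=\int_{K_i^u}v\,\mathrm{d}x$ appearing on the right-hand side of the defining linear system. Since $\Pi_h^u$ is the $L^2$-orthogonal projection onto the piecewise-constant space $U_h$, it preserves these integrals, i.e. $\int_{K_i^u}\Pi_h^u u\,\mathrm{d}x=\int_{K_i^u}u\,\mathrm{d}x$ for every $K_i^u\in\mathcal{T}_h^u$; hence $R_h(\Pi_h^u u)=R_h u$, which is exactly the identity recorded just before the theorem. Invoking the Zienkiewicz--Zhu patch-recovery estimate of \cite{bramble1977zzpatch}, valid under $u\in W^{1,\infty}(\Omega_u)$ and $u|_{\Omega_u^+}\in H^2(\Omega_u^+)$, I obtain $\|u-R_h(\Pi_h^u u)\|_{L^2}=\|u-R_hu\|_{L^2}\leq Ch_u^2$.

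Combining the two bounds and using $h_u^2\leq h_u^{3/2}$ for $h_u$ small then yields $\|u-R_hu_h\|_{L^2}\leq C(h_u^{3/2}+h^{m+1})$, completing the argument. The point I expect to require the most care is not a calculation but a hypothesis check: ensuring the $H^2$-regularity of $u$ on the active set $\Omega_u^+$ so that the cited recovery estimate is applicable. This is precisely where the structural form of $u^*$ (of type $\max\{w,\gamma\}$ with $w$ smooth, as discussed after Assumption~\ref{ass4}) is needed, since on $\Omega_u^+$ one has $u=w$ and thus the required interior smoothness; I would make this dependence explicit in the writeup.
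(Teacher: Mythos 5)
Your proposal is correct and follows essentially the same route as the paper: the triangle inequality splitting off $\|R_h\|\,\|\Pi_h^u u-u_h\|_{L^2}$, Lemma~\ref{lem2} for the super-closeness term, the identity $R_h(\Pi_h^u u)=R_h u$ (which you justify more explicitly than the paper does, via preservation of element integrals), and the cited Z--Z recovery bound $\|u-R_hu\|_{L^2}\leq Ch_u^2$. Your closing remark about verifying $u|_{\Omega_u^+}\in H^2(\Omega_u^+)$ is a worthwhile addition, since the paper invokes that hypothesis from \cite{bramble1977zzpatch} without listing it in the theorem's assumptions.
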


\section{A Posteriori Estimates}\label{sec4}
\subsection{A General Result}\label{sec4.1}
This section conducts an a posteriori error analysis for the 
problem \eqref{eq10}. Analogous to the proof of Theorem 
\ref{thm2}, we will also construct two intermediate 
states. This recasts the error estimate for \eqref{eq10} 
into an analysis of the DG method's error for the underlying 
elliptic equation, thereby permitting the application of the 
established results from Theorem \ref{thm4}.
\begin{thm}\label{thm3}
Let $(y,u,p)$ satisfy $\eqref{eq27},$ $(y_h^{m},u_h
,p_h^{m})$ satisfy $\eqref{eq10}$. If Assumptions 
\ref{ass2},\ref{ass1},\ref{ass3} 
hold, and $y,p\in H^{2}(\Omega),$ then there exists a 
constant $C$ such that
\begin{equation*}
\begin{aligned}
&\|u-u_h\|_{L^2}+\| y-y_h^{m}\|_{L^2}+
\| p-p_h^{m}\|_{L^2}\\
\leq& C(\eta_0+\eta_1(y_h^{m})+\eta_2(y_h^{m})+\eta_3(y_h^
{m})+
\eta_1(p_h^{m})+\eta_2(p_h^{m})+\eta_3(p_h^{m})),
\end{aligned}
\end{equation*}
where 
\begin{equation*}
\begin{aligned}
&\eta_0^2=\|(I-\Pi_h^u)(j^\p(u_h)+B^*p_h^{m})\|_{L^2}^2,\\
&\eta_1^2(y_h^{m})=\sum_{K\in\mathcal{T}^h}\widetilde{h_K}^4\|
f+Bu_h+\nabla\cdot (A\nabla y_{h}^{m})\|_{L^2(K)}^{2},\\
&\eta_1^2(p_{h}^{m})=\sum_{K\in\mathcal{T}^h}\widetilde{h_K}^4\|
g^\p(y_h^{m})+\nabla\cdot (A\nabla p_{h}^{m})\|_{L^2(K)}^{2},
\end{aligned}
\end{equation*}
$\eta_2,\eta_3$ are defined by $\eqref{eq26}$.
\end{thm}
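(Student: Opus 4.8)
The plan is to follow the architecture of the proof of Theorem~\ref{thm2}, changing only the choice of intermediate states so that the discrete state and adjoint become \emph{exact} reconstructed DG approximations in the sense of Theorem~\ref{thm4}. I would introduce $\hat y,\hat p\in H^2(\Omega)$ as the exact solutions of the elliptic problems carrying the \emph{discrete} data,
\[
-\nabla\cdot(A\nabla\hat y)=f+Bu_h,\ \ \hat y|_{\partial\Omega}=\phi,
\qquad
-\nabla\cdot(A\nabla\hat p)=g^\p(y_h^m),\ \ \hat p|_{\partial\Omega}=0.
\]
By construction $y_h^m$ and $p_h^m$ are precisely the reconstructed DG solutions \eqref{eq9} of these two problems, so Theorem~\ref{thm4} applies directly and gives $\|\hat y-y_h^m\|_{L^2}\le C(\eta_1(y_h^m)+\eta_2(y_h^m)+\eta_3(y_h^m))$ and $\|\hat p-p_h^m\|_{L^2}\le C(\eta_1(p_h^m)+\eta_2(p_h^m)+\eta_3(p_h^m))$, with the residual indicators matching the data $f+Bu_h$ and $g^\p(y_h^m)$ exactly as written in the statement. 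It then remains to control the three \emph{continuous} quantities $\|u-u_h\|_{L^2}$, $\|y-\hat y\|_{L^2}$ and $\|p-\hat p\|_{L^2}$.

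Subtracting the continuous equations in \eqref{eq27} from those defining $\hat y,\hat p$ yields $a(y-\hat y,w)=(B(u-u_h),w)$ and $a(q,p-\hat p)=(g^\p(y)-g^\p(y_h^m),q)$ for all $w,q\in H_0^1(\Omega)$. Testing the first with $w=p-\hat p$ and the second with $q=y-\hat y$ and subtracting gives the identity $(u-u_h,B^*(p-\hat p))=(g^\p(y)-g^\p(y_h^m),y-\hat y)$; splitting the right-hand side as $(g^\p(y)-g^\p(\hat y))+(g^\p(\hat y)-g^\p(y_h^m))$ and invoking the strong convexity of $g$ (Assumption~\ref{ass1}) I obtain
\[
(u-u_h,B^*(p-\hat p))\ge\beta\|y-\hat y\|_{L^2}^2+(g^\p(\hat y)-g^\p(y_h^m),y-\hat y).
\]
In parallel, testing the variational inequalities in \eqref{eq27} and \eqref{eq10} with $v=u_h\in U_{ad}$ and $v_h=\Pi_h^u u\in U_{adh}$ (admissible by Assumption~\ref{ass3}) and using the strong convexity of $j$, exactly as in the derivation of \eqref{eq16}, gives $(B^*(p_h^m-p),u-u_h)\ge\alpha\|u-u_h\|_{L^2}^2+(j^\p(u_h)+B^*p_h^m,u-\Pi_h^u u)$.

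Writing $B^*(p-p_h^m)=B^*(p-\hat p)+B^*(\hat p-p_h^m)$ and adding the two inequalities above, I reach
\[
\alpha\|u-u_h\|_{L^2}^2+\beta\|y-\hat y\|_{L^2}^2\le -(j^\p(u_h)+B^*p_h^m,u-\Pi_h^u u)-(g^\p(\hat y)-g^\p(y_h^m),y-\hat y)-(u-u_h,B^*(\hat p-p_h^m)).
\]
The three terms on the right I would estimate by Cauchy--Schwarz and Young's inequality: the first by $\eta_0\,\|u-\Pi_h^u u\|_{L^2}\le\eta_0\,\|u-u_h\|_{L^2}$, using that $(I-\Pi_h^u)(j^\p(u_h)+B^*p_h^m)$ is $L^2$-orthogonal to $U_h$ and that $\Pi_h^u u$ is the best $U_h$-approximation of $u$; the second by the Lipschitz bound $\|g^\p(\hat y)-g^\p(y_h^m)\|_{L^2}\le C\|\hat y-y_h^m\|_{L^2}$; and the third by the boundedness of $B^*$, the last two then being replaced by the Theorem~\ref{thm4} estimates. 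Choosing the Young parameters small enough absorbs the quadratic terms $\|u-u_h\|_{L^2}^2$ and $\|y-\hat y\|_{L^2}^2$ into the left side, leaving $\|u-u_h\|_{L^2}+\|y-\hat y\|_{L^2}\le C(\eta_0+\|\hat y-y_h^m\|_{L^2}+\|\hat p-p_h^m\|_{L^2})$.

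Finally I would return to the true errors via the triangle inequality $\|y-y_h^m\|_{L^2}\le\|y-\hat y\|_{L^2}+\|\hat y-y_h^m\|_{L^2}$, and bound the adjoint error using the continuous $L^2$-stability of the elliptic operator, $\|p-\hat p\|_{L^2}\le C\|g^\p(y)-g^\p(y_h^m)\|_{L^2}\le C\|y-y_h^m\|_{L^2}$, so that $\|p-p_h^m\|_{L^2}$ is controlled by the same right-hand side; substituting the two Theorem~\ref{thm4} bounds yields the claim. The main obstacle is to route the errors so that only $\|u-u_h\|_{L^2}^2$ and $\|y-\hat y\|_{L^2}^2$ appear quadratically on both sides and can be absorbed, with every remaining cross term reaching the computable indicators solely through the Lipschitz continuity of $g^\p$ and the boundedness of $B^*$; the conceptual crux is the choice of intermediate states as exact PDE solutions carrying the discrete data, rather than the DG solutions with exact data used in Theorem~\ref{thm2}, since only this choice makes $y_h^m$ and $p_h^m$ genuine reconstructed DG approximations to which Theorem~\ref{thm4} applies verbatim.
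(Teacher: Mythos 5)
Your proposal is correct and follows essentially the same route as the paper: your $\hat y,\hat p$ are exactly the paper's intermediate states $Sy,Sp$ (exact PDE solutions with discrete data), and the subsequent steps — the coupled error identity, strong convexity of $g$ and $j$, the variational inequalities tested with $v=u_h$ and $v_h=\Pi_h^u u$ via Assumption~\ref{ass3}, Young absorption, triangle inequalities with elliptic $L^2$-stability for $p-\hat p$, and finally Theorem~\ref{thm4} applied to $\|\hat y-y_h^m\|_{L^2}$ and $\|\hat p-p_h^m\|_{L^2}$ — mirror the paper's proof step by step. The only cosmetic difference is in bounding the $\eta_0$ term (you use the best-approximation property $\|u-\Pi_h^u u\|_{L^2}\le\|u-u_h\|_{L^2}$, while the paper replaces $u-\Pi_h^u u$ by $u-u_h$ via orthogonality), which is an equivalent one-line variant.
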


\begin{proof}
Construct the intermediate states as follows.
\begin{equation*}
\begin{aligned}
&-\nabla\cdot(A\nabla Sy)=f+Bu_h,\quad Sy|_{\partial
\Omega}=\phi,\\
&-\nabla\cdot(A\nabla Sp)=g^\prime(y_h^m),\quad Sp|_
{\partial\Omega}=0,
\end{aligned}
\end{equation*}
which means that
\begin{equation}\label{eq21}
\begin{aligned}
&a_{h}(Sy,w_h)=l_{h}[f+Bu_h,
\phi](w_h),\quad\forall w_h\in V_h,\\
&a_{h}(q_h,Sp)=l_{h}[g^\prime(y_h^m),
0](q_h),\quad\forall q_h\in V_h.
\end{aligned}
\end{equation}
From $\eqref{eq27}$ and $\eqref{eq21}$, we have
\begin{equation}\label{eq22}
\begin{aligned}
&a_{h}(y-Sy,w_h)=(B(u-u_h),w_h),\quad\forall w_h\in V_h,\\
&a_{h}(q_h,p-Sp)=
(g^\p(y)-g^\p(y_h^m),q_h),\quad\forall q_h\in V_h.
\end{aligned}
\end{equation}
Take $w_h =p-Sp,q_h =y-Sy$ in $\eqref{eq22}$ 
and subtract the resulting equations to find
$$( B^*(p-Sp), u-u_h)=(g^\p(y)-g^\p(y_h^m),y-Sy).$$
This implies
\begin{equation}\label{eq23}
( B^*(p-Sp), u-u_h)\geq\beta\|y-Sy\|_{L^2}+(g^\p(Sy)-g^\p(y_
h^m),y-Sy).
\end{equation}

In addition, by $\eqref{eq16}$ in the proof of Theorem $\ref{thm2}$, 
we know that
\begin{equation}\label{eq24}
( B^*(p_h^m - p), u - u_h) \geq \alpha \|u - u_h\|_{L^2}^2 + 
( j^\prime(u_h)+B^*p_h^m, u - v_h) ,\quad\forall v_h\in U_{adh}.
\end{equation}
Take $v_h=\Pi_h^u u.$

Combine $\eqref{eq23}$ and $\eqref{eq24}$, we have
\begin{equation*}
\begin{aligned}
&\alpha\|u-u_h\|_{L^2}^{2}+\beta\|y-Sy\|_{L^2}^{2}\\
\leq&-(j^\p(u_h)+B^*p_h^m,u-\Pi_h^uu)-(B^*(Sp-
p_h^m),u-u_{h}) +(g^\p(y_h^m)-g^\p(Sy),y-Sy) \\
  =&-((I-\Pi_h^u)(j^\p(u_h)+B^*p_h^m),u-\Pi_h^uu)\\
& -(B^*(Sp-p_h^m),u-u_{h}) +(g^\p(y_h^m)-g^\p(Sy),y-Sy)
\end{aligned}
\end{equation*}
\begin{equation*}
\begin{aligned}
  =&-((I-\Pi_h^u)(j^\p(u_h)+B^*p_h^m),u-u_h)\\
& -(B^*(Sp-p_h^m),u-u_{h}) +(g^\p(y_h^m)-g^\p(Sy),y-Sy) \\
\leq& C(\eta_0^2+\|B^*(Sp-p_h^m)\|_{L^2}^2+\|Sy-y_h^m\|
_{L^2}^{2})+\frac{\alpha}{2}\|u-u_{h}\|_{L^2}^{2}+\frac
{\beta}{2}\|y-Sy\|_{L^2}^{2}.\quad\quad\quad\,\\
\end{aligned}
\end{equation*}
So we have
\begin{equation*}
\|u-u_h\|_{L^2}+\|y-Sy\|_{L^2}
\leq C(\eta_0+\|Sp-p_h^m\|_{L^2}+\|Sy-y_h^m\|_{L^2}).
\end{equation*}

We will give upper bounds for $\| y-y_h^m
\|_{L^2}$ and $\|p-p_h^m\|
_{L^2}.$
We use the stability of $a_{h}$ on $(H^2(\Omega)\cap 
H_0^1(\Omega),\|\cdot\|_{L^2})$. Let $w_h=y-Sy$ 
and $q_h=p-Sp$ in $\eqref{eq22},$ we have 
\[\|y-Sy\|_{L^2}\leq C\|u-u_h\|_{L^2},\quad
\|p-Sp\|_{L^2}\leq C\|y-y_h^m\|_{L^2}.\] 
Thus
\begin{equation*}
\begin{aligned}
&\| p-p_h^m\|_{L^2}\leq 
C(\| Sp-p_h^m\|_{L^2}+\|y-y_h^m\|_{L^2}),\\
&\| y-y_h^m\|_{L^2}\leq 
C(\| Sy-y_h^m\|_{L^2}+\|u-u_h\|_{L^2}).
\end{aligned}
\end{equation*}
Combine the three inequalities above, we have
\begin{equation*}
\begin{aligned}
\|u-u_{h}\|_{L^2}+\| y-y_h^m\|_{L^2}
+\| p-p_h^m\|_{L^2} 
\leq
C(\eta_0+\|Sp-p_h^m\|_{L^2}+\|Sy-y_h^m\|_{L^2}).\\
\end{aligned}
\end{equation*}
Moreover, by Theorem $\eqref{thm4}$, we have
\begin{equation*}
\begin{aligned}
&\|Sp-p_h^m\|_{L^2}\leq C(\eta_1(p_h^m)+\eta_2(p_h^m)
+\eta_3(p_h^m)),\\
&\|Sy-y_h^m\|_{L^2}\leq C(\eta_1(y_h^m)+\eta_2(y_h^m)
+\eta_3(y_h^m)),
\end{aligned}
\end{equation*}
which leads to the conclusion.
\end{proof}

\subsection{Sharp A Posteriori Estimate.}\label{sec4.2}
We concentrate on the case where
\[U_{ad}=\{v\in L^2({\Omega}_u)\mid v\geq\gamma\ \mathrm
{a.e.\ in}\ {\Omega}_u\},\]
where $\gamma$ is a constant. We divide ${\Omega}_u$
into two disjoint subsets.
\begin{equation*}
\begin{aligned}
\widehat{\Omega}_{u}^+ =\{K_u \in \mathcal{T}_h^u\mid 
u_h^*|_{ K_u}>\gamma\},\quad
\widehat{\Omega}_{u}^0=\{K_u \in \mathcal{T}_h^u\mid 
u_h^*|_{ K_u}=\gamma\}.
\end{aligned}
\end{equation*}
Consider a subset of $\widehat{\Omega}_{u}^0$ 
\[\widehat{\Omega}_{u1}^0=\{x\in\widehat{\Omega}_{u}^0\mid j'(u_h^*)+B^*p_h^
{m*}\geq0\}.\]

\begin{thm}\label{thm5}
Let $(y,u,p)$ satisfy $\eqref{eq27},$ $(y_h^{m},u_h
,p_h^{m})$ satisfy $\eqref{eq10}$. If Assumptions 
\ref{ass2},\ref{ass1} hold, $y,p\in H^{2}({\Omega}),$ 
then there exists a constant $C$ such that
\begin{equation*}
\begin{aligned}
&\eta_*+\|u-u_h\|_{L^2}+\| y-y_h^{m}\|_{L^2}+
\| p-p_h^{m}\|_{L^2}\\
\leq& C(\widetilde{\eta_0}+\eta_1(y_h^{m})+\eta_2(y_h^{m})
+\eta_3(y_h
^{m})+\eta_1(p_h^{m})+\eta_2(p_h^{m})+\eta_3(p_h^{m})),
\end{aligned}
\end{equation*}
where 
\begin{equation*}
\begin{aligned}
\eta_*\, =\|(I-\Pi_h^u)(j^\p(u)+B^*p)\|_{L^2
(\widehat{\Omega}_{u}^{+})},\quad\widetilde{\eta_0}\,=\|j^\p(u_h)+
B^*p_h^{m}\|_{L^2((\widehat{\Omega}_{u1}^{0})^c)}.
\end{aligned}
\end{equation*}
$\eta_1,\eta_2,\eta_3$ are defined in Theorem $\ref{thm3}$. 
Here $(\widehat{\Omega}_{u1}^{0})^c=\Omega_u\setminus
\widehat{\Omega}_{u1}^{0}.$
\end{thm}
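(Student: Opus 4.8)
The plan is to follow the proof of Theorem \ref{thm3} almost verbatim, reducing the combined control--state--adjoint error to the DG error of the two auxiliary elliptic problems defined by the intermediate states $Sy,Sp$ of \eqref{eq21}, and then to refine only the treatment of the control variational inequality so as to (i) replace the global control estimator by its restriction $\widetilde{\eta_0}$ to $(\widehat{\Omega}_{u1}^0)^c$ and (ii) transfer $\eta_*$ to the left-hand side. Throughout I write $r_h:=j'(u_h)+B^*p_h^m$ and $r:=j'(u)+B^*p$ for the discrete and exact residuals, and I reuse the identities \eqref{eq22} together with the two inequalities \eqref{eq23}, \eqref{eq24} obtained there.

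The only new ingredient in the energy estimate is to take $v_h=u_h$ in \eqref{eq24}, which is admissible because $u_h\in U_{adh}$. Adding this to \eqref{eq23} and rearranging gives
\begin{equation*}
\begin{aligned}
\alpha\|u-u_h\|_{L^2}^2+\beta\|y-Sy\|_{L^2}^2\leq{}& (B^*(p_h^m-Sp),u-u_h)+(r_h,u_h-u)\\
&{}+(g'(y_h^m)-g'(Sy),y-Sy),
\end{aligned}
\end{equation*}
so that, compared with Theorem \ref{thm3}, the only term needing a sharper bound is $(r_h,u_h-u)$. Here I would exploit the one-sided constraint through the splitting $\Omega_u=\widehat{\Omega}_{u1}^0\cup(\widehat{\Omega}_{u1}^0)^c$: on $\widehat{\Omega}_{u1}^0$ one has $u_h=\gamma$, $r_h\geq0$ and $u\geq\gamma$, hence $r_h(u_h-u)\leq0$ pointwise, while on the complement Cauchy--Schwarz gives $(r_h,u_h-u)\leq\|r_h\|_{L^2((\widehat{\Omega}_{u1}^0)^c)}\|u-u_h\|_{L^2}=\widetilde{\eta_0}\|u-u_h\|_{L^2}$. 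After Young's inequality and absorption, the $L^2$-stability of $a_h$ on $H^2(\Omega)\cap H_0^1(\Omega)$, the relations \eqref{eqb}, and Theorem \ref{thm4} applied to $Sy-y_h^m$ and $Sp-p_h^m$ would reproduce, exactly as in Theorem \ref{thm3}, the estimate
\begin{equation*}
\begin{aligned}
\|u-u_h\|_{L^2}+\|y-y_h^m\|_{L^2}+\|p-p_h^m\|_{L^2}\leq{}& C(\widetilde{\eta_0}+\eta_1(y_h^m)+\eta_2(y_h^m)+\eta_3(y_h^m)\\
&{}+\eta_1(p_h^m)+\eta_2(p_h^m)+\eta_3(p_h^m)).
\end{aligned}
\end{equation*}

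It then remains to control $\eta_*$. On the discrete inactive set $\widehat{\Omega}_u^+$, choosing $v_h=u_h\pm\epsilon$ elementwise in the discrete inequality of \eqref{eq10} forces the elementwise mean of $r_h$ to vanish, i.e. $\Pi_h^u r_h=0$, so $(I-\Pi_h^u)r_h=r_h$ there. Writing $(I-\Pi_h^u)r=r_h-(I-\Pi_h^u)(r_h-r)$ on $\widehat{\Omega}_u^+$, using $\widehat{\Omega}_u^+\subset(\widehat{\Omega}_{u1}^0)^c$, the contractivity of $\Pi_h^u$, the Lipschitz continuity of $j'$ and the boundedness of $B^*$, I would obtain
\begin{equation*}
\eta_*\leq\|r_h\|_{L^2(\widehat{\Omega}_u^+)}+\|r_h-r\|_{L^2}\leq\widetilde{\eta_0}+C(\|u-u_h\|_{L^2}+\|p-p_h^m\|_{L^2}),
\end{equation*}
and inserting the error bound just established would absorb the two error terms and yield the claim. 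The main obstacle I anticipate is precisely this localization: since $\widehat{\Omega}_{u1}^0$ is cut out by a pointwise sign condition on $r_h$ and need not be aligned with $\mathcal{T}_h^u$, one cannot isolate it by a piecewise-constant test function and must instead argue pointwise via the single choice $v_h=u_h$; the secondary point to verify carefully is that the discrete complementarity really gives $\Pi_h^u r_h=0$ on $\widehat{\Omega}_u^+$, which is what lets $\eta_*$ be dominated by $\widetilde{\eta_0}$ plus the already-controlled errors.
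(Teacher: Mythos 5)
Your proof is correct and follows essentially the same route as the paper's: the choice $v_h=u_h$ in \eqref{eq24} combined with \eqref{eq23}, the pointwise sign splitting of $(j^\p(u_h)+B^*p_h^m,u_h-u)$ over $\widehat{\Omega}_{u1}^0$ and its complement, reduction to $\|Sy-y_h^m\|_{L^2}$, $\|Sp-p_h^m\|_{L^2}$ via stability and Theorem \ref{thm4}, and domination of $\eta_*$ by $\widetilde{\eta_0}$ plus already-controlled errors through the discrete complementarity $\Pi_h^u(j^\p(u_h)+B^*p_h^m)=0$ on $\widehat{\Omega}_u^+$. Your two-sided perturbation $v_h=u_h\pm\epsilon\chi_{K_u}$ is in fact a slightly cleaner justification of that complementarity than the paper's one-sided test function $v_h=\gamma\chi_{K_u}+u_h\chi_{\Omega_u\setminus K_u}$.
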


\begin{proof}
For $\forall K\in
\mathcal{T}_h^u,$ take $v_h=\gamma\chi_{K}+u_h\chi_{{\Omega}_u
\setminus K}\in 
U_{adh}.$ Here $\chi_K$ is the indicator function of $K$. We know by 
$\eqref{eq10}$ that 
\[(\Pi_h^u(j'(u_h)+B^*p_h^m),v_h-u_h)=(j'(u_h)+B^*p_h^m,
v_h-u_h)\geq0,\] 
which implies $\Pi_h^u(j'(u_h)+B^*p_h^m)=0$ on $\widehat{\Omega}_{u}^+.$
\begin{equation*}
\begin{aligned}
\eta_*=&\|j^\p(u)+B^*p-\Pi_h^u(j^\p(u)+B^*p)\|_{L^2
(\widehat{\Omega}_{u}^{+})}\\
\leq&\|j^\p(u)+B^*p-j'(u_h)-B^*p_h^m\|_{L^2
(\widehat{\Omega}_{u}^{+})}+
\|(I-\Pi_h^u)(j^\p(u_h)+B^*p_h^{m})\|_{L^2
(\widehat{\Omega}_{u}^{+})}\\
&+\|\Pi_h^u(j^\p(u_h)+B^*p_h^{m}-j^\p(u)-B^*p)\|
_{L^2(\widehat{\Omega}_{u}^{+})}\\
\leq&C(\|j^\p(u)+B^*p-j'(u_h)-B^*p_h^m\|_{L^2
(\widehat{\Omega}_{u}^{+})}+\|j^\p(u_h)+B^*p_h^{m}\|_{L^2
(\widehat{\Omega}_{u}^{+})})\\
\leq&C(\|u-u_h\|_{L^2(\widehat{\Omega}_{u}^{+})}+\|p-p_h^m\|
_{L^2(\widehat{\Omega}_{u}^{+})}+\|j'(u_h)+B^*p_h^m\|_{L^2(\widehat{\Omega}_
{u}^{+})})\\
\leq&C(\|u-u_h\|_{L^2}+\|p-p_h^m\|
_{L^2}+\widetilde{\eta_0}).
\end{aligned}
\end{equation*} 

Notice that $j^\p(u_h)+B^*p_h^m\geq 0$ on $\widehat{\Omega}_{u1}^{0}$, we have
\begin{equation*}
\begin{aligned}
&(j^\p(u_h)+B^*p_h^m,u_h-u)\\
=&(j^\p(u_h)+B^*p_h^m,u_h-u)_{L^2(\widehat{\Omega}_{u1}^{0})}+
(j^\p(u_h)+B^*p_h^m,u_h-u)_{L^2((\widehat{\Omega}_{u1}^{0})^c)}\\
\leq&0+C\widetilde{\eta_0}^2+\frac{\alpha}{2}\|u_h-u\|_{L^2
((\widehat{\Omega}_{u1}^{0})^c)}^2\leq C\widetilde{\eta_0}^2+\frac
{\alpha}{2}\|u_h-u\|_{L^2}^2.
\end{aligned}
\end{equation*} 

Follow the proof of Theorem $\ref{thm3}$ but taking 
$v_h=u_h$ in $\eqref{eq24}$, 
\begin{equation*}
\begin{aligned}
&\alpha\|u-u_h\|_{L^2}^{2}+\beta\|y-Sy\|_{L^2}^{2}\\
\leq&(j^\p(u_h)+B^*p_h^m,u_h-u)-(B^*(Sp-
p_h^m),u-u_{h}) +(g^\p(y_h^m)-g^\p(Sy),y-Sy) \\
\leq& C(\widetilde{\eta_0}^2
+\|Sp-p_h^m\|_{L^2}^2
+\|Sy-y_h^m\|_{L^2}^{2})+\frac{3\alpha}{4}
\|u-u_{h}\|_{L^2}^{2}+\frac{\beta}{2}\|y-Sy\|_{L^2}^{2}.
\end{aligned}
\end{equation*}
So we have
\begin{equation*}
\|u-u_h\|_{L^2}+\|y-Sy\|_{L^2}
\leq C(\widetilde{\eta_0}+\|Sp-p_h^m\|_{L^2}+\|
Sy-y_h^m\|_{L^2}).
\end{equation*}
Using the estimate of $\eta_*$ as above, we can finish the proof by the 
same procedure in Theorem $\ref{thm3}$.
\end{proof}

\begin{rmk}\label{rmk3}
For the case where
\[U_{ad}=\{v\in L^2({\Omega}_u)\mid\gamma\leq v\leq\Gamma\ 
\mathrm{a.e.\ in}\ {\Omega}_u\},\]
where $\gamma<\Gamma$ are constants, take
\[\widehat{\Omega}_{u}^+=\{K_u \in \mathcal{T}_h^u\mid
\gamma<u_h^*|_{ K_u}<\Gamma\},
\quad\widehat{\Omega}_{u}^0=\{K_u \in \mathcal{T}_h^u\mid
u_h^* |_{K_u}=\gamma\mathrm{\ or\ }\Gamma\},\]
and 
\begin{equation*}
\begin{aligned}
\widehat{\Omega}_{u1}^0=&\{x\in\widehat{\Omega}_u^0\mid 
u_h^*(x)=\gamma,j'(u_h^
*)+B^*p_h^{m*}\geq0\}
\scalebox{1.3}{$\cup$}\\
&\{x\in\widehat{\Omega}_u^0\mid u_h^*(x)=\Gamma,j'
(u_h^*)+B^*p_h^{m*}\leq0\}.
\end{aligned}
\end{equation*}
The inequality in Theorem $\ref{thm5}$ still holds.
\end{rmk}
\section{Numerical Results}\label{sec5}
In thus section, 
we provide some numerical examples to validate 
the derivation in the previous sections. In Section \ref{sec5.1}, 
we verify the convergence order of a priori estimate proved 
in Theorem \ref{thm2} and Theorem \ref{thm3.2}. In 
Section $\ref{sec5.2}$, we study the case that the control 
variable has 
sufficiently smooth data. We build another approximation to 
the optimality condition $\eqref{eq27}$
without using $\mathcal{T}_h^u$, which 
means that we do not discretize the control variable. 
In Section \ref{sec5.3}, 
we demonstrate the efficiency of  a posteriori estimate proved 
in Theorem \ref{thm3}. 



\subsection{Examples for Convergence Order.}\label{sec5.1}
We will give some examples to verify the convergence order 
proved in Theorem $\ref{thm2}$ and Theorem 
$\ref{thm3.2}.$ 
We always take the penalty parameter
$\mu=3m^2$ in the examples below. Notice that the 
right-hand side of the estimates in theses two theorems has 
the form $C(h_u+h^m),C(h_u+h^{m+1})$ or $C(h_u^{\frac{3}{2}}+h^{m+1})$. If we want 
to see the convergence order clearly, we must take $h_u\propto
h^m$ or $h^{m+1}$ to meet the order of $h$. Then we can find 
the values of a priori error have convergence order $m$ or $m+1$, 
as shown below.

\noindent\textbf{Example 1.} Take $\Omega_u=\Omega=(0,1)^2,
B=\mathrm{Id}.$
\begin{equation*}
\begin{aligned}
&\min_{u\in L^2(\Omega_u)}\frac{1}{2}\|y-y_d\|_
{L^2}^2+\frac{1}{2}\|u-u_d\|_{L^2}^2,
&\\\mathrm{s.t.}\ &-\Delta y=f+u,\quad y|_{\partial\Omega}=y_d|_
{\partial\Omega},\\
& u\geq 0,\quad \mathrm{a.e.\ in}\ \Omega_u.
\end{aligned}
\end{equation*}
where
\begin{equation*}
\begin{aligned}
&y_d=0,\quad u_d=1-\mathrm{sin}\frac{\pi x_1}{2}-\mathrm{sin}
\frac{\pi x_2}{2},\\
&p=\mathrm{sin} (\pi x_1) \mathrm{sin} (\pi x_2),\quad f=4\pi^4
p-\mathrm{max}\{u_d-p,0\}.
\end{aligned}
\end{equation*}
The exact solution
\begin{equation*}
y^*=2\pi^2p+y_d,\quad u^*=\mathrm{max}\{u_d-p,0\},\quad 
p^*=p.
\end{equation*}
\begin{figure}[H] \label{fig_ex11_L2err}
\centering
\includegraphics[width=0.4\textwidth]{./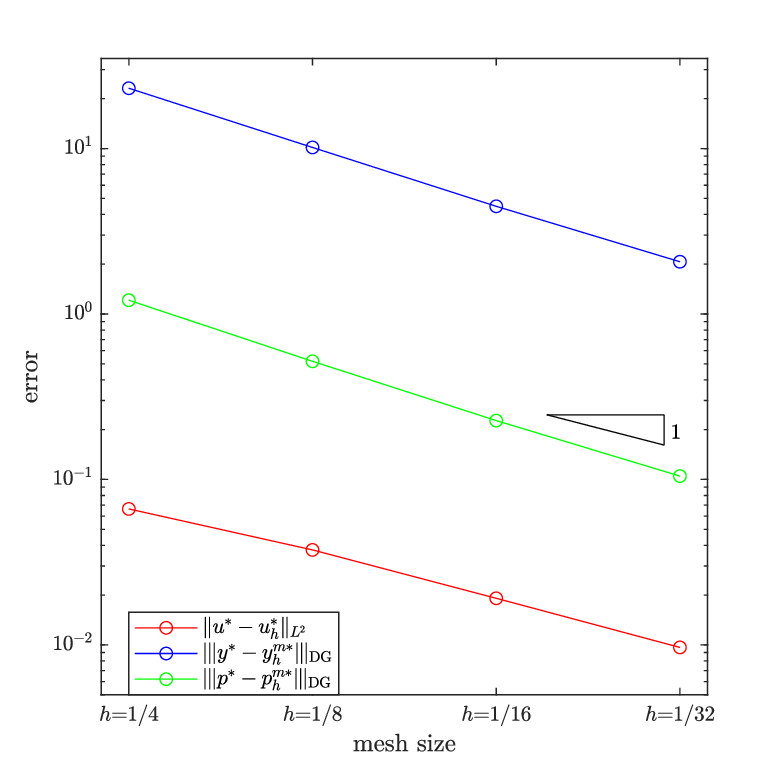}
\includegraphics[width=0.4\textwidth]{./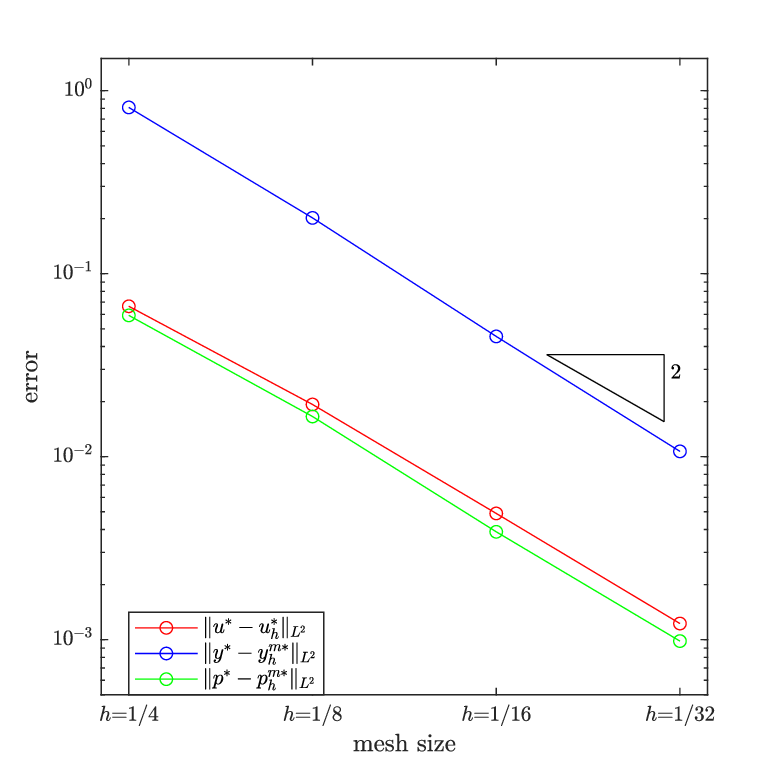}
\caption{Error of Example 1 for $m=1.$ Take $h_u=h$(left) and $h_u=4h^2$(right).}
\end{figure}\vspace{-0pt}
\begin{figure}[H] \label{fig_ex12_L2err}
\centering
\includegraphics[width=0.4\textwidth]{./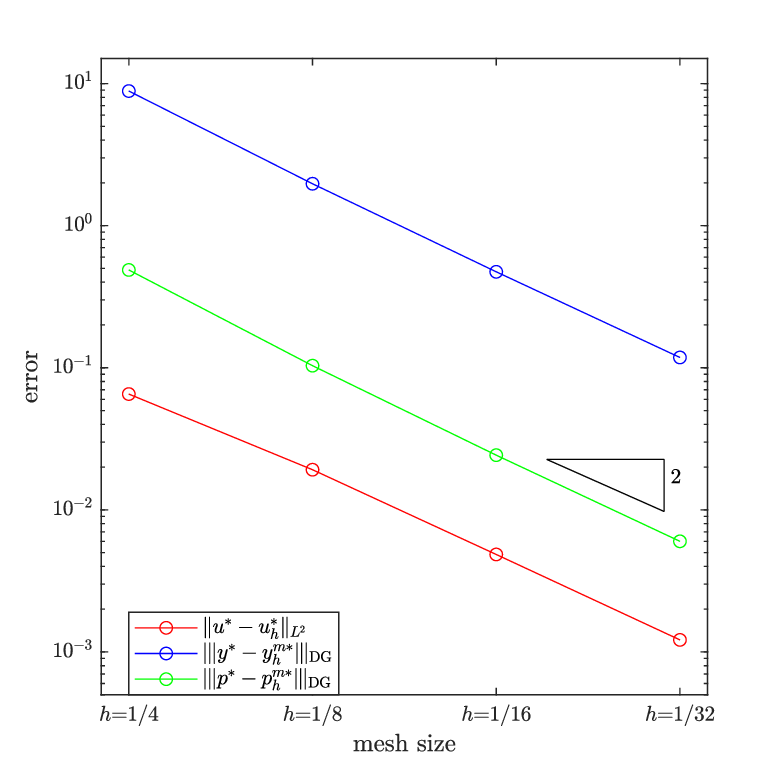}
\includegraphics[width=0.4\textwidth]{./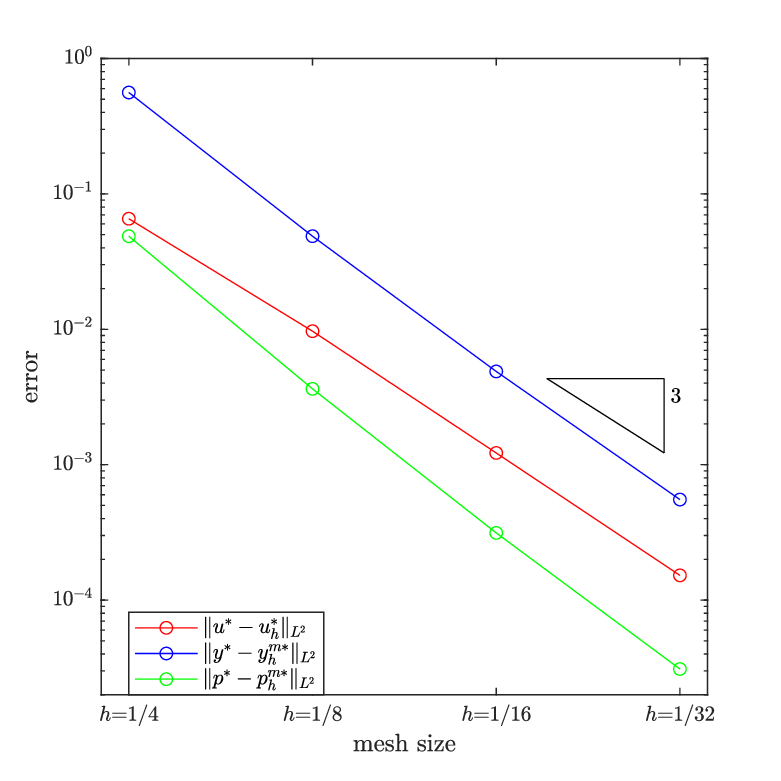}
\caption{Error of Example 1 for $m=2.$ Take $h_u=4h^2$(left) and $h_u=16h^3$(right).}
\end{figure}
\begin{figure}[H]
\centering
\captionsetup{labelsep=space} 
\begin{minipage}{0.48\textwidth}
  \centering
  \includegraphics[width=0.83\textwidth]{./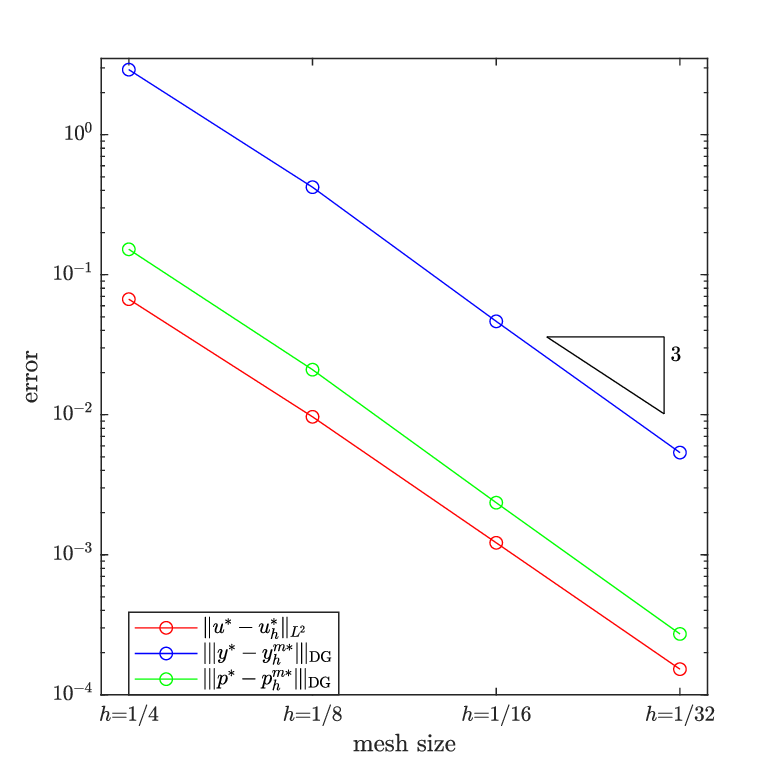}
  \caption{Error of Example 1 for $m=3$. Take $h_u=16h^3$.}
  \label{fig_ex13_L2err}
\end{minipage}\hspace{-37.5pt}
\begin{minipage}{0.48\textwidth}
  \centering
  \includegraphics[width=0.83\textwidth]{./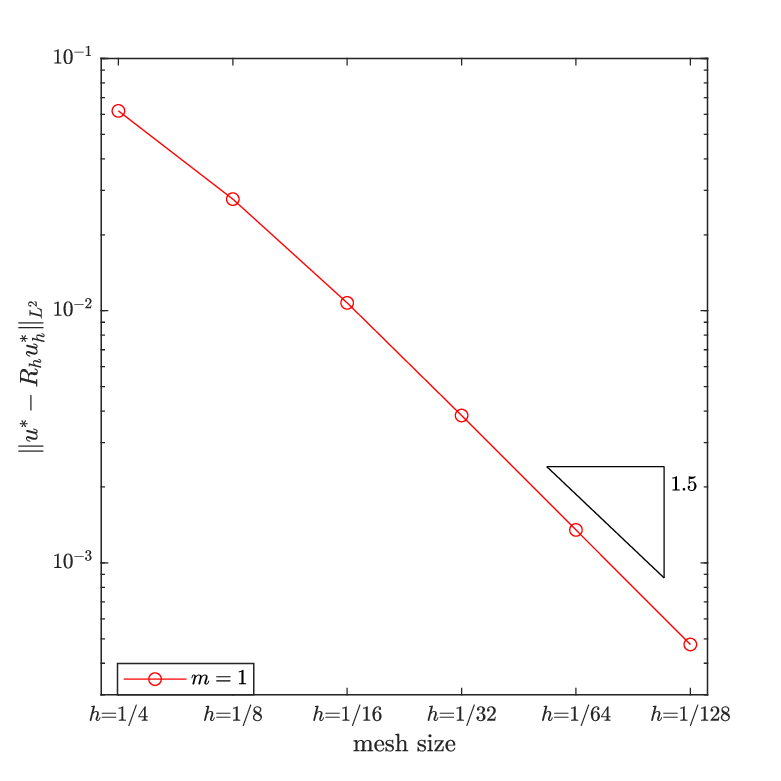}
  \caption{Recovery error of Example 1. 
Take $h_u=h.$}
  \label{fig_ex1sup_L2err}
\end{minipage}
\end{figure}\vspace{-0pt}

\noindent\textbf{Example 2.} Take $\Omega_u=\Omega=(0,1)^2,
B=\ds\frac{1}{2}\mathrm{Id}.$
\begin{equation*}
\begin{aligned}
&\min_{u\in L^2(\Omega_u)}\frac{1}{2}\|y-y_d\|_{L^2}^2+
\frac{1}{2}\|u-u_d\|_{L^2}^2+\frac{4}{3}\|u-u_d\|_{L^3}^3,
&\\\mathrm{s.t.}\ &-\Delta y=f+\frac{1}{2}u,\quad y|_{\partial\Omega}=y_d|_
{\partial\Omega},\\
& 0\leq u\leq 1,\quad \mathrm{a.e.\ in}\ \Omega_u.
\end{aligned}
\end{equation*}
where
\begin{equation*}
\begin{aligned}
&u_d=\frac{3}{2}-x_1- x_2,\quad p=\mathrm{sin} (\pi x_1) \mathrm
{sin} (\pi x_2),\\
&y_d=0,\quad f=4
\pi^4p-\frac{1}{2}Pr_{[0,1]}(u_d-s(p)).
\end{aligned}
\end{equation*}
The exact solution
\begin{equation*}
y^*=2\pi^2p+y_d,\quad u^*=Pr_{[0,1]}(u_d-s(p)),\quad 
p^*=p,
\end{equation*}
where $s(p)$ is an implicit function satisfies \[s(p)+4
s(p)^2\mathrm{sgn}(s(p))=\frac{1}{2}p,\]
and $Pr_{[0,1]}(s):=\min\{\max\{s,0\},1\}.$
\begin{figure}[H] \label{fig_ex31_L2err}
\centering
\includegraphics[width=0.4\textwidth]{./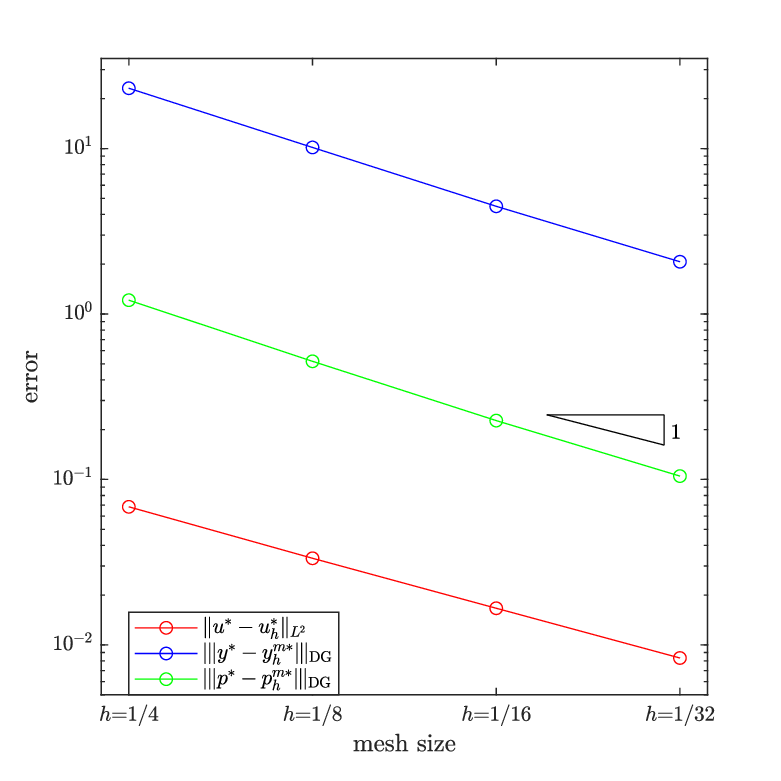}
\includegraphics[width=0.4\textwidth]{./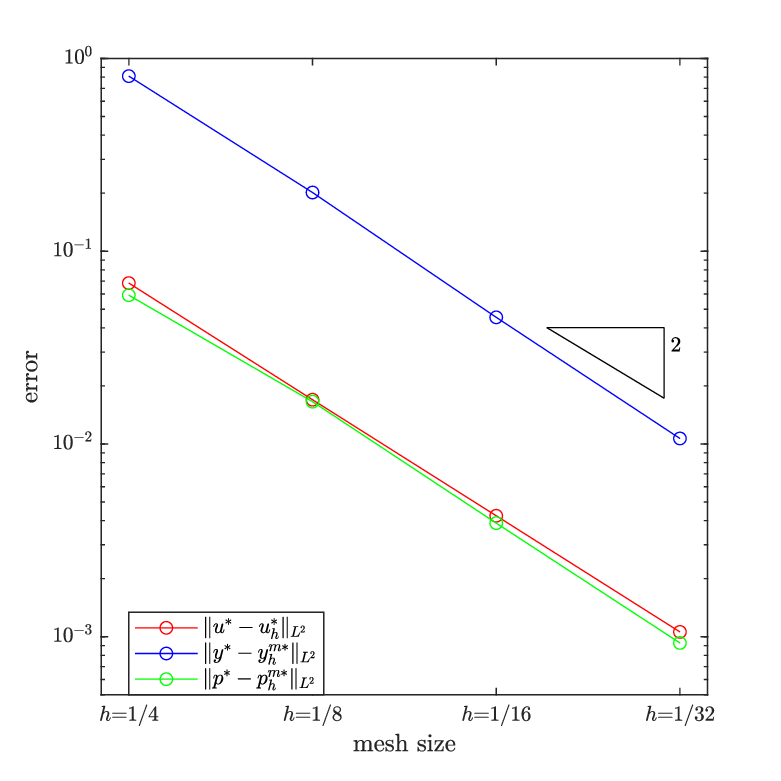}
\caption{Error of Example 2 for $m=1.$ Take $h_u=h$(left) and $h_u=4h^2$(right).}
\end{figure}\vspace{-0pt}
\begin{figure}[H] \label{fig_ex32_L2err}
\centering
\includegraphics[width=0.4\textwidth]{./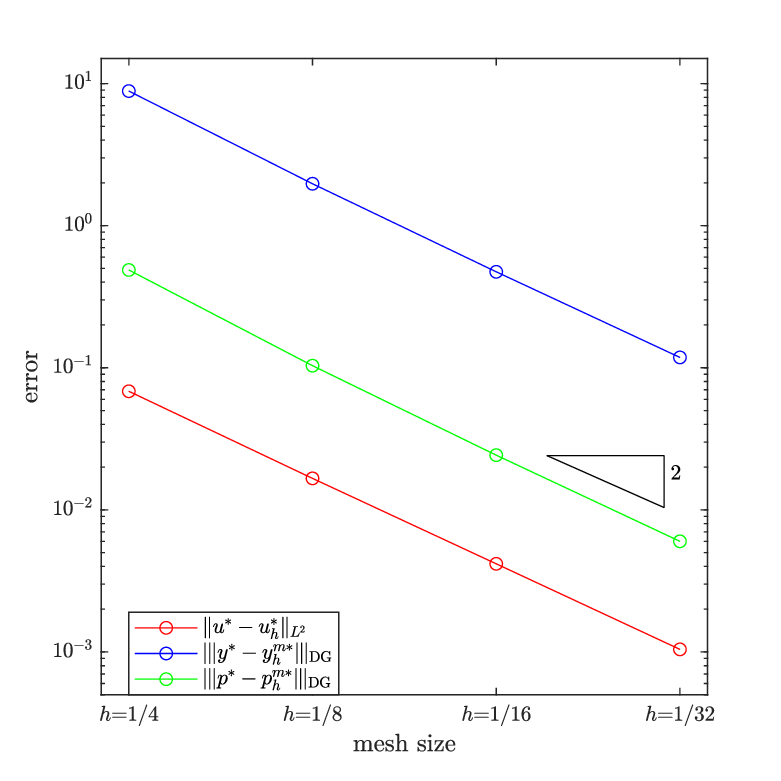}
\includegraphics[width=0.4\textwidth]{./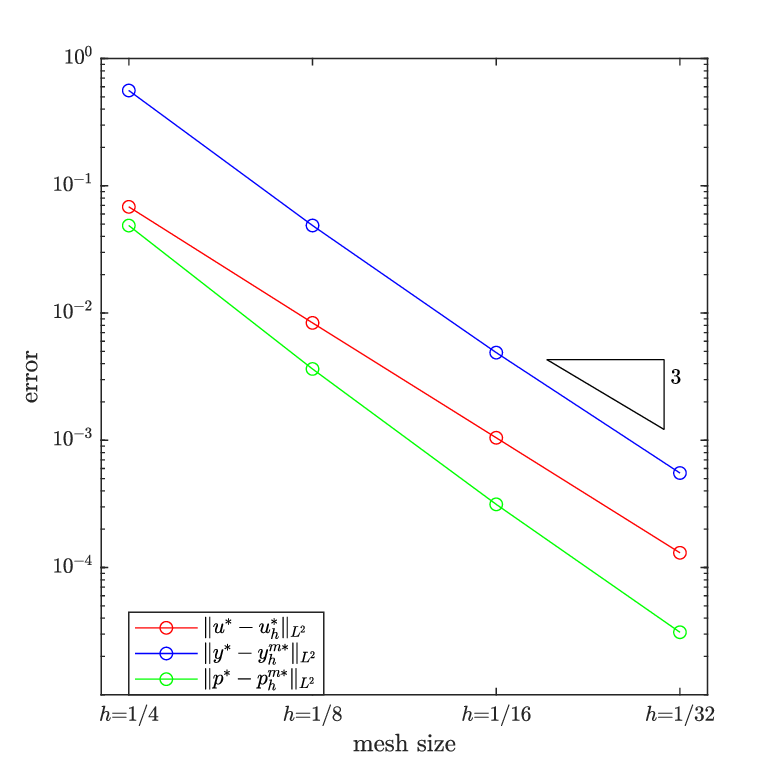}
\caption{Error of Example 2 for $m=2$. Take $h_u=4h^2$(left) and $h_u=16h^3$(right).}
\end{figure}
\begin{figure}[H]
\centering
\captionsetup{labelsep=space} 
\begin{minipage}{0.48\textwidth}
  \centering
  \includegraphics[width=0.83\textwidth]{./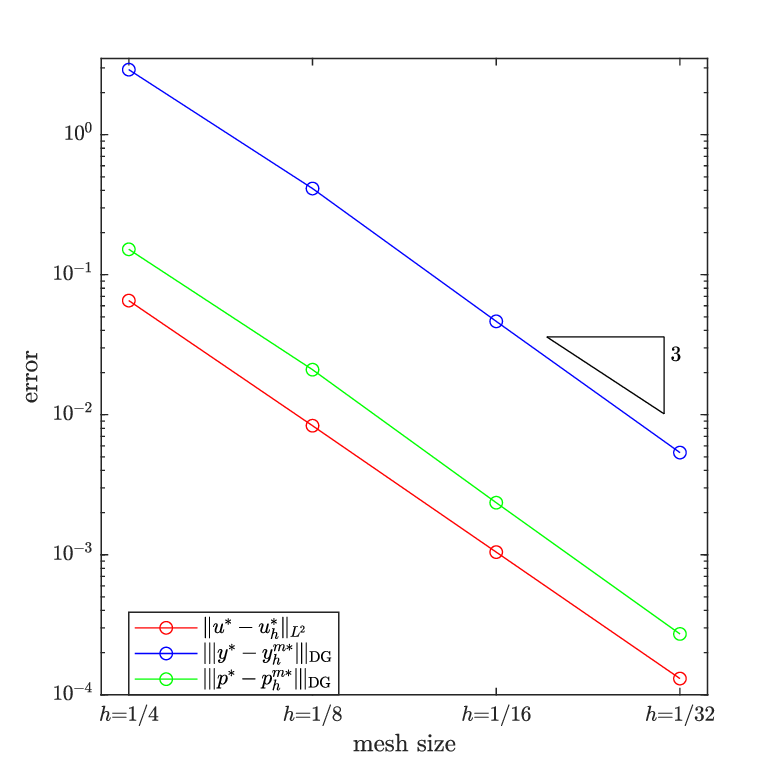}
  \caption{Error of Example 2 for $m=3$. Take $h_u=16h^3.$}
  \label{fig_ex33_L2err}
\end{minipage}\hspace{-37.5pt}
\begin{minipage}{0.48\textwidth}
  \centering
  \includegraphics[width=0.83\textwidth]{./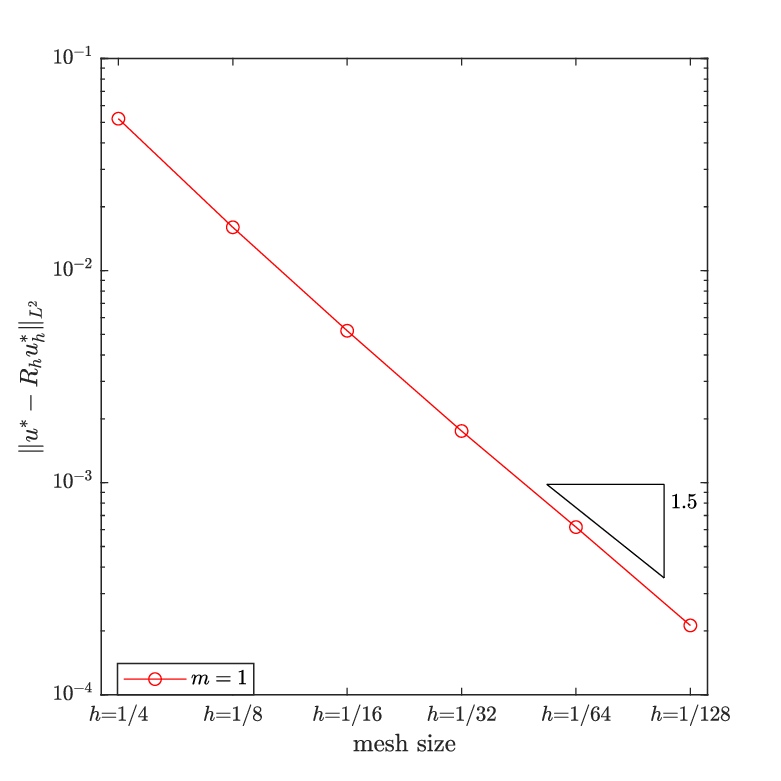}
  \caption{Recovery error of Example 2. Take $h_u=h.$}
  \label{fig_ex3sup_L2err}
\end{minipage}
\end{figure}

\subsection{Smooth Example.}\label{sec5.2}
Notice that in Theorem $\ref{thm2}$ and Theorem 
$\ref{thm3.2}$, the order 
of $h_u$ is higher compared to $h$, which leads to more 
refinement on 
$\mathcal{T}_h^u$. In order to optimize computational
 efficiency and minimize
memory cost, we can calculate $u$ without discretization 
and eliminate 
$\mathcal{T}_h^u$, i.e. we consider an approximation 
to $\eqref{eq27}$ better
than $\eqref{eq10}$ as follows.
\begin{equation}\label{eq5.3}
\left\{
\begin{aligned}
&a_{h}(\widetilde{y}_h^{m\ast},w_h^m)=l_{h}[f+B\widetilde{u}^*,\phi]
(w_h^m),\quad\forall w_h^m\in V_h^m,\\
&a_{h}(q_h^m,\widetilde{p}_h^{m\ast})=l_{h}[g^\prime(\widetilde{y}_h^
{m\ast}),0](q_h^m),\quad\forall q_h^m\in V_h^m,\\
&(j^\prime(\widetilde{u}^*)+B^*\widetilde{p}_h^{m*},v-\widetilde{u}^*)\geq 0,
\quad \forall v\in U_{ad}.
\end{aligned}
\right.
\end{equation}

We denote the solution to problem $\eqref{eq5.3}$ by 
($\widetilde{y}_h^{m\ast},\widetilde{u}^*,\widetilde{p}_h^{m*}$). 
Follow the proof of Theorem $\ref{thm2}$, we can take $v_h=u^*$ 
to derive
\[\|u^*-\widetilde{u}^*\|_{L^2}^{2}+\|\widetilde{y}_{h}^{m*}-P_{h}{y}
\|_{L^2}^{2}
\leq C(\|p^*-P_hp\|_{L^2}^2+\|y^*-P_{h}y\|_{L^2}^{2})\]
where $C$ depends on $\alpha,\beta.$ Consequently, we 
have
\[\|u^*-\widetilde{u}^*\|_{L^2}+
\interleave y^*-\widetilde{y}_h^{m*}\interleave_{\mathrm{DG}}+\interleave p^*-\widetilde{p}_h^{m*}\interleave_{\mathrm{DG}}\leq Ch^m,\]
and
\[\|u^*-\widetilde{u}^*\|_{L^2}+
\| y^*-\widetilde{y}_h^{m*}\|_{L^2}+\| p^*-\widetilde{p}_h^{m*}\|_{L^2}\leq Ch^{m+1}.\]

In each iteration, we rewrite the third equation in 
$\eqref{eq5.1}$ as 
\[\widetilde{u}_{ \mkern+1mu n \mkern-2mu+\mkern-2mu 1}
=Pr_{U_{ad}}(\widetilde{u}_{n}-
\rho_n(j'(\widetilde{u}_{n})+B^*\widetilde{p}_{hn}^m)).\] 
We can prove that the linear convergence rate still holds. 
The smoothness of $u^*$ permits high-order accurate numerical 
integration of $\widetilde{u}_n$-dependent integrands by evaluating directly at 
the quadrature points of $\mathcal{T}_h.$ Consequently, we only 
need to store the values of $\widetilde{u}_n$ at these quadrature points.
This modification serves to significantly improve 
computational efficiency and offers the advantage of reduced 
memory consumption. 

\noindent\textbf{Example 3.} Take $\Omega_u=\Omega=(0,1)^2,
B=\mathrm{Id}.$
\begin{equation*}
\begin{aligned}
&\min_{u\in L^2(\Omega_u)}\frac{1}{2}\|y-y_d\|_{L^2}^2+
\frac{1}{2}\|u-u_d\|_{L^2}^2,
&\\\mathrm{s.t.}\ &-\Delta y=f+u,\quad y|_{\partial\Omega}=y_d|_
{\partial\Omega},\quad\int_{\Omega_u}u\mathrm{d}x\geq0.
\end{aligned}
\end{equation*}
where
\begin{equation*}
\begin{aligned}
&u_d=1-2x_1-2x_2,\quad 
p=\mathrm{sin} (\pi x_1) \mathrm{sin} (\pi x_2),\\
&y_d=0,
\quad f=4\pi^4p-u_d+p+\min\{0,\overline{u_d-p}\}.
\end{aligned}
\end{equation*}
The exact solution
\begin{equation*}
y^*=2\pi^2p+y_d,\quad u^*=u_d-p-\min\{0,\overline{u_d-p}\},
\quad p^*=p,
\end{equation*}
here $\overline{u_d-p}=\ds\frac{1}{m(\Omega)}\int_{\Omega}(
u_d-p)\mathrm{d}x.$

\begin{figure}[H] \label{fig_ex41_L2err}
\centering
\includegraphics[width=0.4\textwidth]{./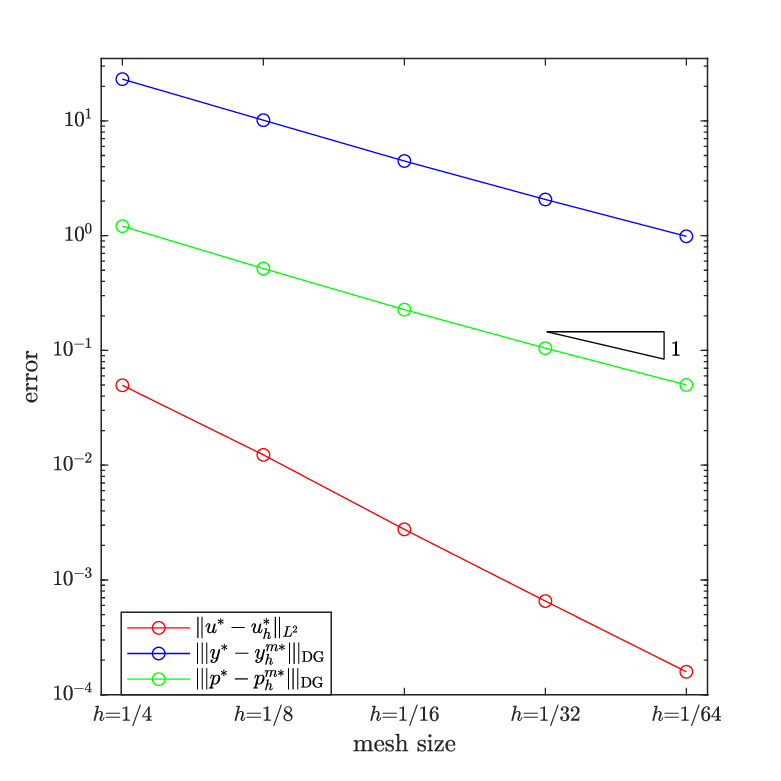}
\includegraphics[width=0.4\textwidth]{./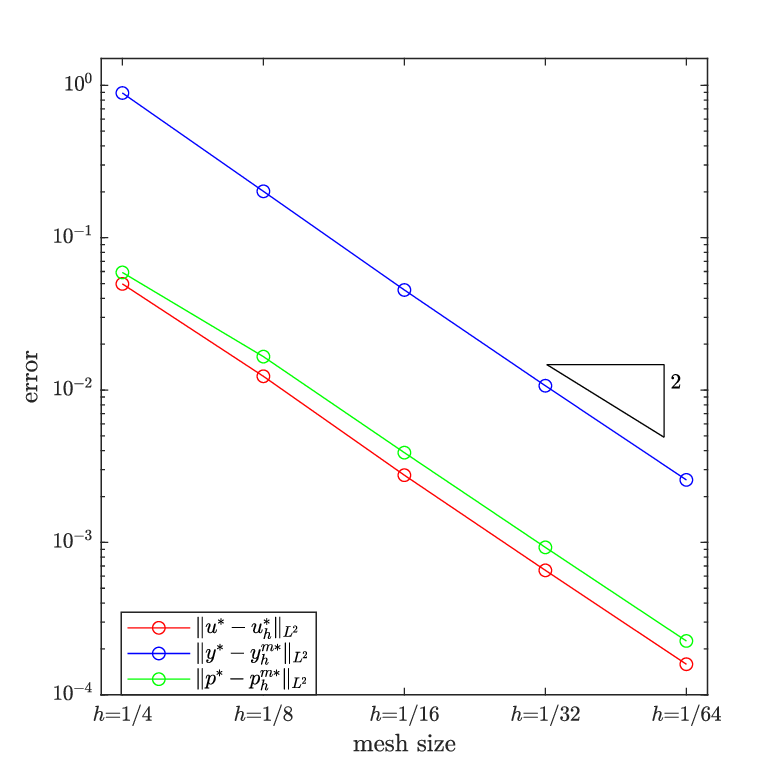}
\caption{Error of Example 3 for $m=1.$}
\end{figure}
\begin{figure}[H] \label{fig_ex42_L2err}
\centering
\includegraphics[width=0.4\textwidth]{./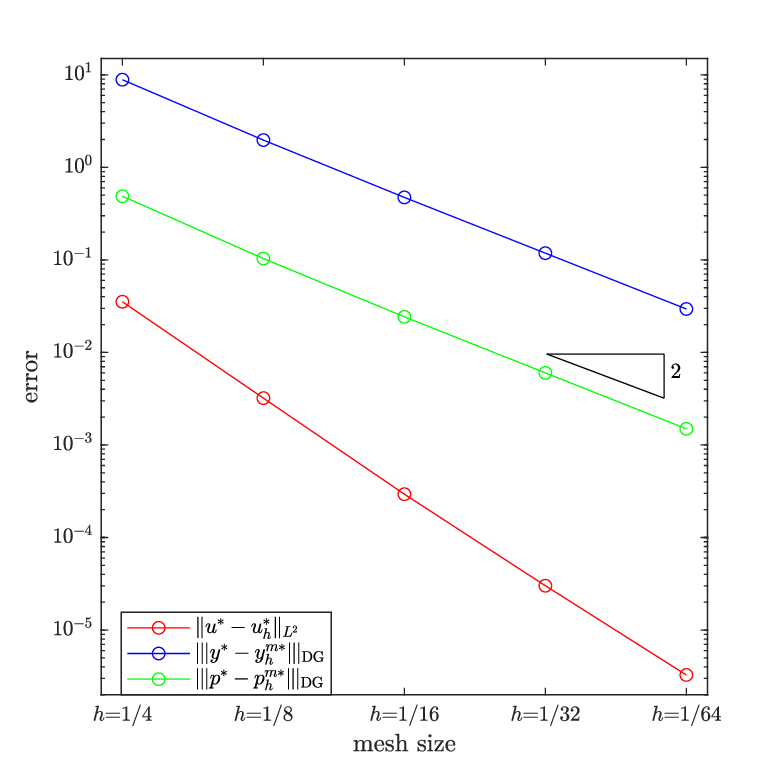}
\includegraphics[width=0.4\textwidth]{./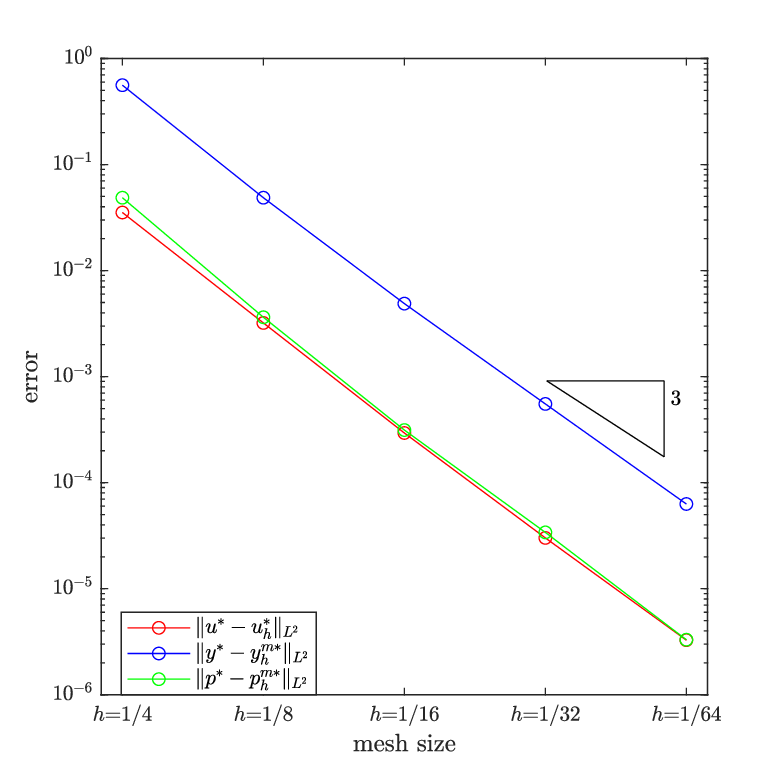}
\caption{Error of Example 3 for $m=2.$}
\end{figure}
\begin{figure}[H] \label{fig_ex43_L2err}
\centering
\includegraphics[width=0.4\textwidth]{./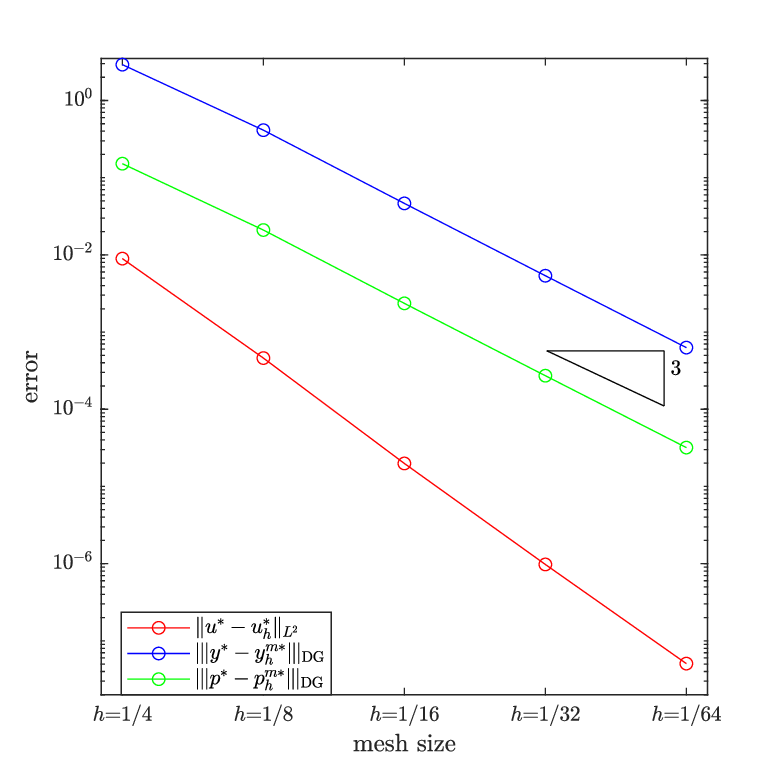}
\includegraphics[width=0.4\textwidth]{./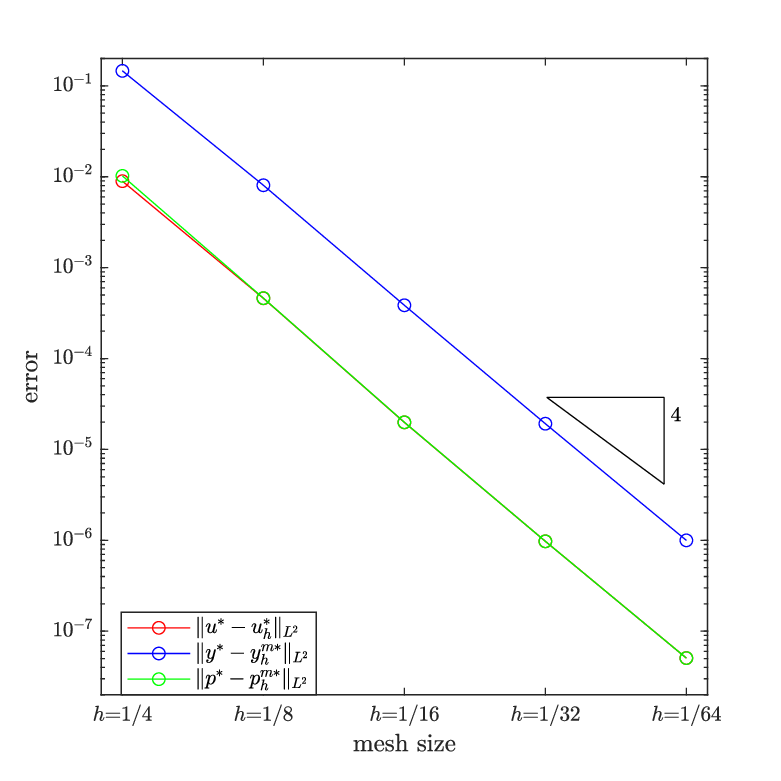}
\caption{Error of Example 3 for $m=3.$}
\end{figure}

\subsection{A Posteriori Estimator.}\label{sec5.3}
We will show the efficiency of a posteriori error estimator 
provided in Theorem $\ref{thm3}$. We will plot the ratio of a 
posteriori error indicator 
and local error for three variables $(y,u,p)$. For the control  
variable $u$, a posteriori error estimators
\[e_{K_u}^2:=\frac{\|j'(u^*_h)+B^*p_h^{m*}\|^2_{L^2(K_u)}}
{\|u^*-u_h^*\|^2_{L^2(K_u)}},\quad\forall K_u\in\mathcal{T}_h^u.\]
For the variables $y,p$ and $K\in\mathcal{T}_h$, we write 
\begin{equation*}
\begin{aligned}
&\eta_{K_y}^2:={\widetilde{h_K}^4\|
f+Bu_h^*+\nabla\cdot (A\nabla y_{h}^{m*})\|_{L^2(K)}^{2}
+\sum_{e\in \mathcal{E}_K\cap\mathcal{E}_{h}}\widetilde{h_e}\|
[y_h^{m*}]\|_{L^2(e)}^2+
\sum_{e\in \mathcal{E}_K\cap\mathcal{E}_h^I}\widetilde{h_e}^3\|
[A\nabla y_{h}^{m*}]\|_{L^2(e)}^{2}}
,\\
&\eta_{K_p}^2:={\widetilde{h_K}^4\|
g'(y_h^{m*})+\nabla\cdot (A\nabla p_{h}^{m*})\|_{L^2(K)}^{2}
+\sum_{e\in \mathcal{E}_K\cap\mathcal{E}_{h}}\widetilde{h_e}\|
[p_h^{m*}]\|_{L^2(e)}^2+
\sum_{e\in \mathcal{E}_K\cap\mathcal{E}_h^I}\widetilde{h_e}^3\|
[A\nabla p_{h}^{m*}]\|_{L^2(e)}^{2}}.
\end{aligned}
\end{equation*}
We define
\[e_{K_y}^2:=\frac{\eta_{K_y}^2}{\|y^*-y_h^{m*}\|_{L^2(K)}^2},\quad
e_{K_p}^2:=\frac{\eta_{K_p}^2}{\|p^*-p_h^{m*}\|_{L^2(K)}^2},\quad
\forall K\in\mathcal{T}_h.\]
In addition, these expressions are defined as 0 whenever their 
 denominators are 0.

\noindent\textbf{Example 4.} 
We will present some results of 
$e_{K_y},e_{K_p}$ and $e_{K_u}$ for the problem in Example 1. 
We can see that the maximum and minimum values 
of $e_{K_y}$ 
and $e_{K_p}$ show no significant variation 
with the refinement of $\mathcal{T}_h$, 
illustrating the efficiency $\eta_{K_y}$ and $\eta_{K_p}$.
\begin{table}[H]
\centering
\renewcommand\arraystretch{1.0}
\begin{tabular}{l|l|l|l|l|l|l}
\hline
$h$&1/4&1/8&1/16&1/32&1/64&1/128\\
\hline
$\max_K e_{K_y}$&9.656&18.10&20.88&24.28&25.99&26.63\\
\hline
$\min_K e_{K_y}$&2.375&2.312&2.112&1.965&1.807&1.620\\
\hline
$\max_K e_{K_p}$&9.837&18.03&21.49&23.92&24.56&23.98\\
\hline
$\min_K e_{K_p}$&1.406&1.294&1.284&1.333&1.222&1.093\\
\hline
\end{tabular}
\caption{$e_{K_y}$ and $e_{K_p}$ of Example 4. Take $m=1,
h_u=h.$}
\end{table}
Next we show the image of $e_{K_u}$ for $m=1$. Here we plot 
$\min\{e_{K_u}, 2\}$ instead of $e_{K_u}$, for the reason that 
the maximum value of $e_{K_u}$ is too large and always
exceeds 1000. We can see that the values of 
$e_{K_u}$ in the region $\{u^*>0\}$ become near to 1 with the 
refinement of $\mathcal{T}_h,$ indicating that $\eta_{K_u}$ is 
efficient. The borderline between two 
regions $\{e_{K_u}>0\}$ and $\{e_{K_u}=0\}$ 
tends to the curve $\{u_d-p^*=0\}.$

\begin{figure}[htbp]
\centering
\begin{minipage}{0.48\textwidth}
  \centering  
  \includegraphics[width=0.83\textwidth]{./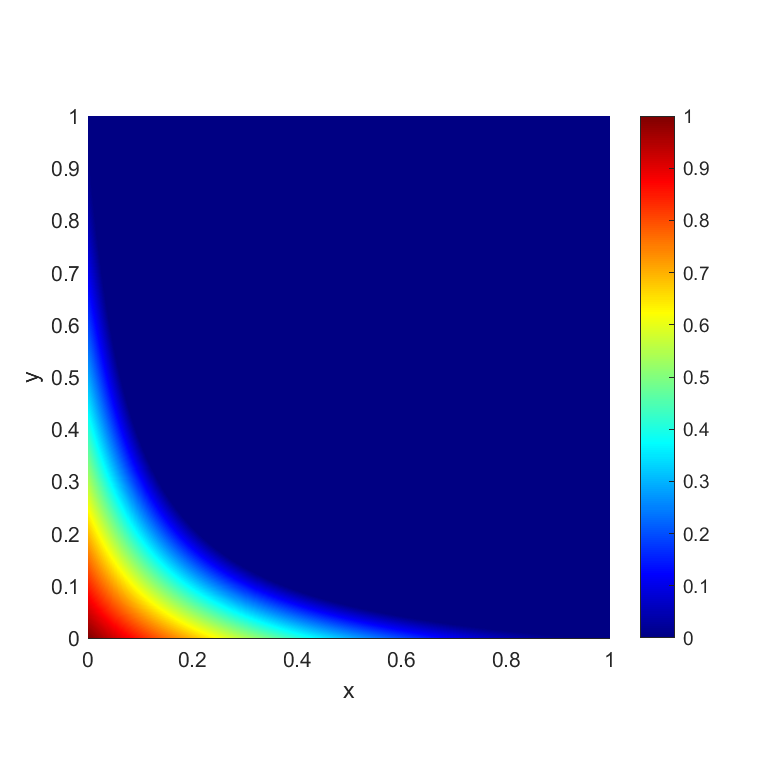}
  \caption{Exact solution $u^*$ in Example 4.}
  \label{fig_ex1ex_L2err}
\end{minipage}\hspace{-37.5pt}
\begin{minipage}{0.48\textwidth}
  \centering
  \includegraphics[width=0.83\textwidth]{./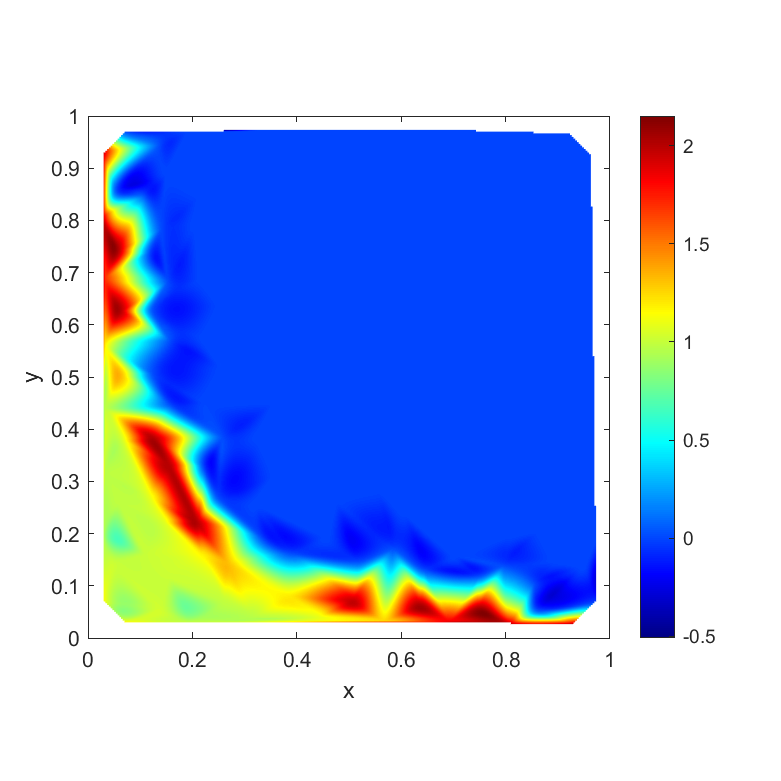}
  \caption{$e_{K_u}$ of Example 4. Take $h_u=h=1/8.$}
  \label{fig_exp11_L2err}
\end{minipage}
\end{figure}
\begin{figure}[H]
\centering
\begin{minipage}{0.48\textwidth}
  \centering  
  \includegraphics[width=0.83\textwidth]{./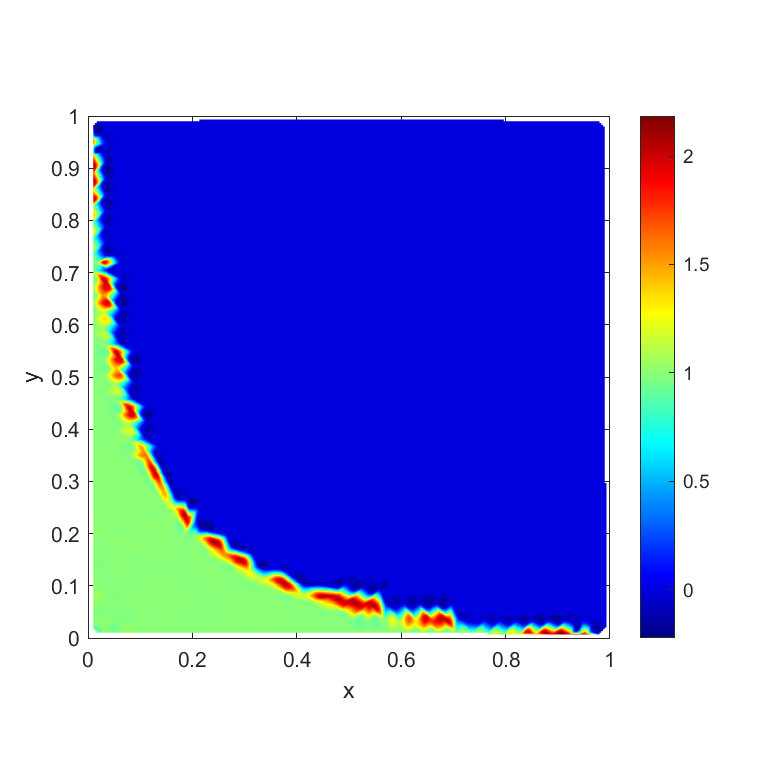}
  \caption{$e_{K_u}$ of Example 4. Take $h_u=h=1/32.$}
  \label{fig_exp13_L2err}
\end{minipage}\hspace{-37.5pt}
\begin{minipage}{0.48\textwidth}
  \centering
  \includegraphics[width=0.83\textwidth]{./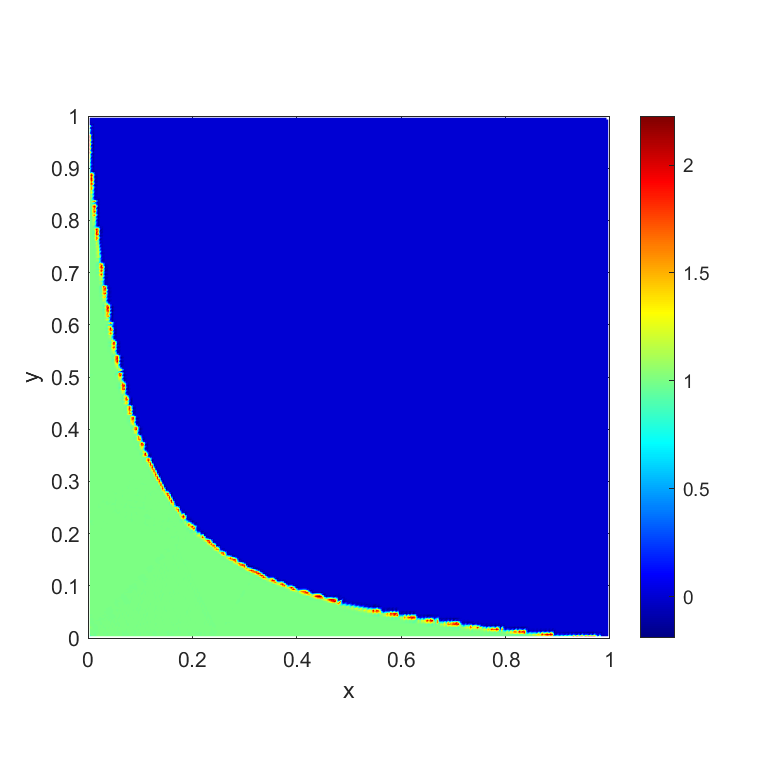}
  \caption{$e_{K_u}$ of Example 4. Take $h_u=h=1/128.$}
  \label{fig_exp15_L2err}
\end{minipage}
\end{figure}

\noindent\textbf{Example 5.} 
We will present the results of 
$e_{K_y},e_{K_p}$ and $e_{K_u}$ for the problem in Example 2. 
We can see that the maximum and minimum values 
of $e_{K_y}$ 
and $e_{K_p}$ show no significant variation 
with the refinement of $\mathcal{T}_h$.
\begin{table}[H]
\centering
\renewcommand\arraystretch{1.0}
\begin{tabular}{l|l|l|l|l|l|l}
\hline
$h$&1/4&1/8&1/16&1/32&1/64&1/128\\
\hline
$\max_K e_{K_y}$&9.651&18.10&20.88&24.28&26.00&26.60\\
\hline
$\min_K e_{K_y}$&2.376&2.313&2.111&1.964&1.807&1.620\\
\hline
$\max_K e_{K_p}$&9.841&18.03&21.50&23.92&24.56&23.99\\
\hline
$\min_K e_{K_p}$&1.407&1.294&1.285&1.333&1.222&1.093\\
\hline
\end{tabular}
\caption{$e_{K_y}$ and $e_{K_p}$ of Example 5. Take $m=1,h_u=h.$}
\end{table}
Next we show the results of $e_{K_u}$ for $m=1$.  Consistent with Example 4, 
we plot $\min\{e_{K_u}, 4\}$ to avoid the influence of extreme 
values in $e_{K_u}$. In this case, our restriction is $U_{ad}=\{
u\in L^2(\Omega_u)\mid0\leq u\leq 1\ \mathrm{a.e.\ in}\ 
\Omega_u\}$. Two borderlines are evident in the results, 
corresponding to the conditions $\{u_d-s(p^*)=0\}$ and $\{u_d
-s(p^*)=1\}$. This is illustrated in Figures \ref{fig_ex3ex_L2err} and \ref{fig_exp35_L2err}. Outside these boundaries,
$e_{K_u}$ remains under 4.

\begin{figure}[H]
\centering
\begin{minipage}{0.48\textwidth}
  \centering  
  \includegraphics[width=0.83\textwidth]{./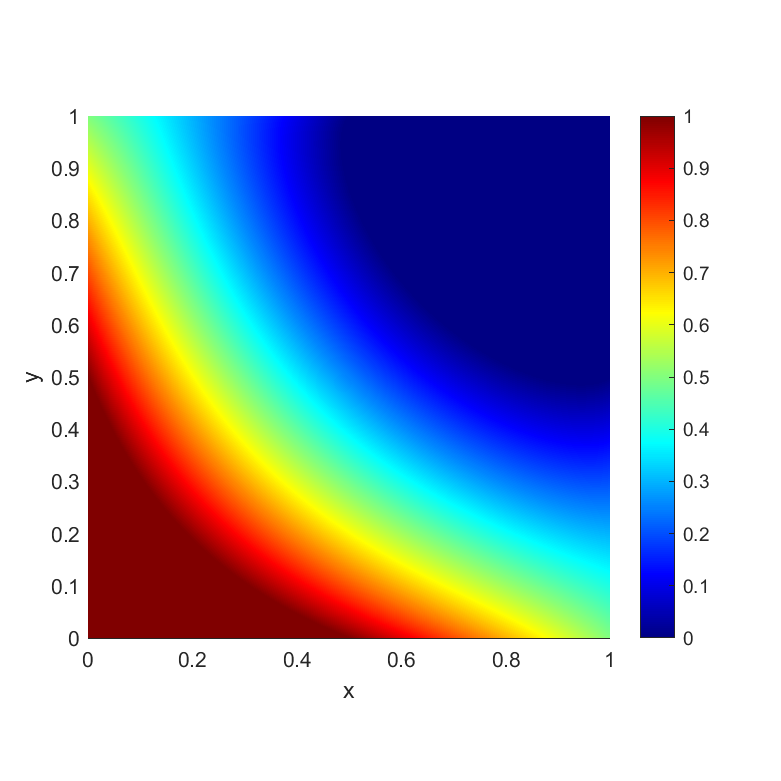}
  \caption{Exact solution $u^*$ in Example 5.}
  \label{fig_ex3ex_L2err}
\end{minipage}\hspace{-37.5pt}
\begin{minipage}{0.48\textwidth}
  \centering
  \includegraphics[width=0.83\textwidth]{./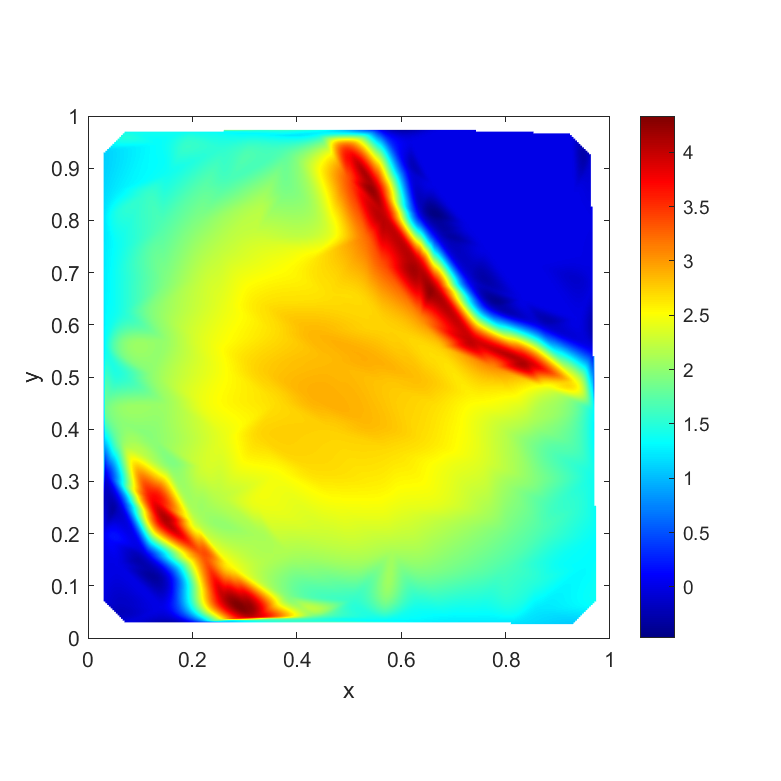}
  \caption{$e_{K_u}$ of Example 5. Take $h_u=h=1/8.$}
  \label{fig_exp31_L2err}
\end{minipage}
\end{figure}
\begin{figure}[H]
\centering
\begin{minipage}{0.48\textwidth}
  \centering  
  \includegraphics[width=0.83\textwidth]{./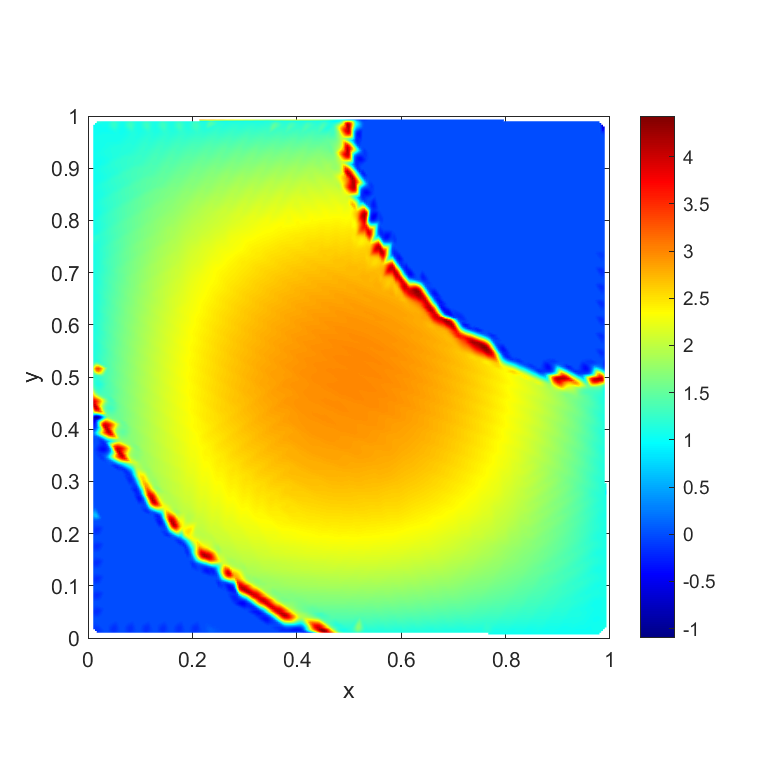}
  \caption{$e_{K_u}$ of Example 5. Take $h_u=h=1/32.$}
  \label{fig_exp33_L2err}
\end{minipage}\hspace{-37.5pt}
\begin{minipage}{0.48\textwidth}
  \centering
  \includegraphics[width=0.83\textwidth]{./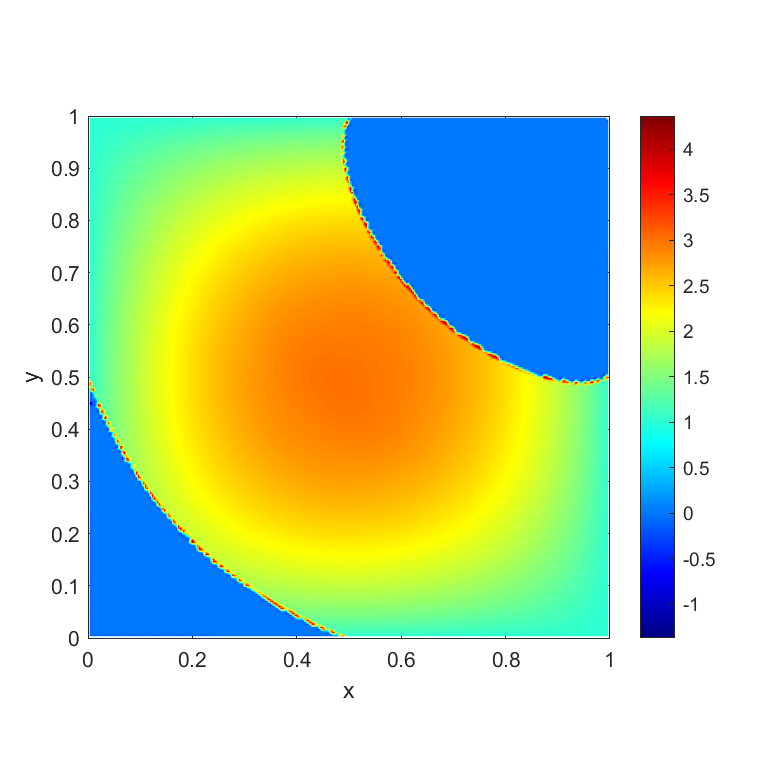}
  \caption{$e_{K_u}$ of Example 5. Take $h_u=h=1/128.$}
  \label{fig_exp35_L2err}
\end{minipage}
\end{figure}
\section{Conclusion}\label{sec6}
This work applies the reconstructed discontinuous approximation 
method to distributed elliptic 
optimal control problems with a strongly convex cost 
functional. We establish well-posedness of the discrete 
problem and derive both a priori and a posteriori error 
estimates in the $L^2$-norm and the energy norms. The discrete 
system is solved via a projected gradient descent algorithm 
with linear convergence rate. Numerical 
results substantiate the theoretical estimates and demonstrate 
the method's practical effectiveness.

\section*{Acknowledgments}
The research of this project is partially funded by National Natural 
Science Foundation of China (No. 12288101 and 12401399). The 
computational resources are partially supported by High-performance 
Computing Platform of Peking University.

\bibliographystyle{amsplain}
\bibliography{ref}

\end{document}